\def\algone{{\sf Varag }}
\def\newalgone{{\sf Varag}}
\def\eqnok#1{(\ref{#1})}
\def\argmin{{\mathrm{argmin}}}
\def\Argmin{{\mathrm{Argmin}}}
\def\exp{{\mathrm{exp}}}
\def\vgap{\vspace*{.1in}}
\providecommand{\dnorm}[1]{\|#1\|_*}
\newcommand{\beqa}{\begin{eqnarray}}
\newcommand{\eeqa}{\end{eqnarray}}
\newcommand{\beqas}{\begin{eqnarray*}}
\newcommand{\eeqas}{\end{eqnarray*}}
\newcommand{\eat}[1]{}
\newcommand{\E}{{\mathbb{E}}}
\newcommand{\tsum}{\textstyle{\sum}}
\newcommand{\beq}{\begin{equation}}
\newcommand{\eeq}{\end{equation}}
\newcommand{\nn}{\nonumber}
\newcommand{\bbe}{\mathbb{E}}
\newcommand{\dnorms}[1]{\|#1\|_*^2}
\newcommand{\bbr}{\mathbb{R}}
\newenvironment{proof}{\noindent {\em Proof: }\ignorespaces}%
                {\hspace*{\fill}$\Box$\par}
        {\hspace*{\fill}$\Box$\par\vspace{4mm}}
\newenvironment{proofof}[1]{\smallskip\noindent{\bf Proof of #1.}}%
        {\hspace*{\fill}$\Box$\par}
\newtheorem{theorem}{Theorem}
\newtheorem{lemma}{Lemma}
\newtheorem{remark}{Remark}
\newcommand{\todo}[1]{#1}
\title{A unified variance-reduced accelerated gradient method for convex optimization}
\author{%
  Guanghui Lan\\
  H. Milton Stewart School of Industrial \& Systems Engineering\\
   Georgia Institute of Technology\\
    Atlanta, GA 30332\\
    \texttt{george.lan@isye.gatech.edu}\\
    \And
    Zhize Li\\
    Institute for Interdisciplinary Information Sciences\\
    Tsinghua University\\
    Beijing 100084, China\\
    \texttt{zz-li14@mails.tsinghua.edu.cn}
    \And
    Yi Zhou\\
    IBM Almaden Research Center\\
    San Jose, CA 95120\\
    \texttt{yi.zhou@ibm.com}
}
\begin{document}

\maketitle

\begin{abstract}
We propose a novel randomized incremental gradient algorithm, namely, VAriance-Reduced Accelerated Gradient (\newalgone),
for finite-sum optimization.
Equipped with a unified step-size policy that adjusts itself to the value of the condition number, 
\algone exhibits the unified optimal rates of convergence for solving smooth convex finite-sum problems directly regardless of their strong convexity.
Moreover, 
\algone is the first accelerated randomized incremental gradient method that benefits from the strong convexity of the data-fidelity term to achieve 
the optimal linear convergence.
It also establishes an optimal linear rate of convergence for solving 
a wide class of problems only satisfying a certain error bound condition rather than strong convexity.
\algone can also be extended to solve stochastic finite-sum problems.
\end{abstract}

\section{Introduction}\label{sec:intro}
The problem of interest in this paper is the convex programming (CP) problem given in the form of
\beq\label{cp}
\psi^*:=\min_{x \in X} \left\{\psi(x):=\tfrac{1}{m}\tsum_{i=1}^{m}f_i(x)+ h(x)\right\}.
\eeq
Here, $X\subseteq \bbr^n$ is a closed convex set, the component function
$f_i:X\rightarrow\bbr,\ \ i=1,\dots,m,$ are smooth and convex function with $L_i$-Lipschitz continuous gradients over $X$, i.e.,
$\exists L_i\ge 0$ such that
\beq\label{def_smoothness}
\|\nabla f_i(x_1)-\nabla f_i(x_2)\|_*\le L_i \|x_1-x_2\|, \ \ \forall x_1,x_2 \in X,
\eeq
and $h: X\rightarrow\bbr$ is a relatively simple but possibly nonsmooth convex function.
For notational convenience, we denote
$f(x) := \tfrac{1}{m}\tsum_{i=1}^m f_i(x)$ and
$L := \tfrac{1}{m}\tsum_{i=1}^m L_i$.
It is easy to see that $f$ has $L$-Lipschitz continuous gradients, i.e., for some $L_f \ge 0$,
$
\|\nabla f(x_1) - \nabla f(x_2) \|_* \le L_f \|x_1 - x_2\| \le L \|x_1- x_2\|, \ \ \forall x_1, x_2 \in X.
$
It should be pointed out that it is not necessary to assume $h$ being strongly convex. Instead, we assume that
$f$ is possibly strongly convex with modulus $\mu \ge 0$.

\todo{We also consider a class of stochastic finite-sum optimization problems given by
\beq\label{sp}
\psi^*:=\min_{x \in X} \left\{\psi(x):=\tfrac{1}{m}\tsum_{i=1}^{m}\bbe_{\xi_i}[F_i(x,\xi_i)]+ h(x)\right\},
\eeq
where $\xi_i$'s are random variables with support $\Xi_i\subseteq \bbr^d$. It can be easily seen that \eqref{sp} is a special case of \eqref{cp} with $f_i=\bbe_{\xi_i}[F_i(x,\xi_i)], i=1,\dots, m$. However, different from deterministic finite-sum optimization problems, only noisy gradient information of each component function $f_i$ can be accessed for the stochastic finite-sum optimization problem in \eqref{sp}.
Particularly, \eqref{sp} models the generalization risk minimization in distributed machine learning problems.}


Finite-sum optimization given in the form of \todo{\eqref{cp} or \eqref{sp}} has recently found a wide range of applications in machine learning (ML), statistical inference, and image processing, and hence becomes the subject of intensive studies during the past few years.
In centralized ML, $f_i$ usually denotes the loss generated by a single data point, while in distributed ML, it 
may correspond to the loss function for an agent $i$
, which is connected to other agents 
in a distributed network.

Recently,
randomized incremental gradient (RIG) methods have emerged as an important class of first-order methods for finite-sum optimization (e.g.,\cite{BlHeGa07-1,JohnsonZhang13-1,xiao2014proximal,DefBacLac14-1,SchRouBac13-1,lan2015optimal,allen2016katyusha,allen2016improved,hazan2016variance,LinMaiHar15-1,lan2018random}).
In an important work, \cite{SchRouBac13-1} (see \cite{BlHeGa07-1} for a precursor) showed that by incorporating new gradient estimators into stochastic gradient descent (SGD) one can possibly
achieve a linear rate of convergence for smooth and strongly convex finite-sum optimization.
Inspired by this work, \cite{JohnsonZhang13-1} proposed a stochastic variance reduced gradient (SVRG) 
which incorporates a novel stochastic estimator of $\nabla f(x_{t-1})$.
More specifically, each epoch of SVRG starts with the computation of the exact gradient
 $\tilde g = \nabla f(\tilde x)$ for a given $\tilde x\in \bbr^n$ and then runs SGD for a fixed number of steps
using the gradient estimator
 \[
 G_t = (\nabla f_{i_t}(x_{t-1}) - \nabla f_{i_t}(\tilde x)) + \tilde g,
 \]
 where $i_t$ is a random variable with support on $\{1,\dots,m\}$.
 They show that the variance of $G_t$ vanishes as the algorithm proceeds,
 and hence SVRG exhibits an improved linear rate of convergence, i.e., ${\cal O}\{(m+L/\mu)\log(1/\epsilon)\}$, for smooth and strongly convex finite-sum problems.
See \cite{xiao2014proximal,DefBacLac14-1} for the same complexity result.
Moreover, \cite{allen2016improved} show that by doubling the epoch length 
SVRG obtains an ${\cal O}\{m\log(1/\epsilon)+L/\epsilon\}$ complexity bound for smooth convex finite-sum optimization.

Observe that the aforementioned variance reduction methods are not accelerated and hence they are not optimal even when the number of components $m=1$. 
Therefore, much recent research effort has been devoted to the design of optimal RIG methods. 
In fact, \cite{lan2015optimal} established a lower complexity bound for RIG methods 
by showing that whenever the dimension is large enough,
the number of gradient evaluations required by any RIG methods to find an $\epsilon$-solution of a smooth and strongly convex finite-sum problem 
i.e., a point $\bar x \in X$ s.t.
$\bbe[\|\bar x - x^*\|^2_2] \le \epsilon$, cannot be smaller than
\beq \label{RIG_lb}
{\Omega} \left( \left(m + \sqrt{\tfrac{m L}{\mu}}\right) \log \tfrac{1}{\epsilon}\right).
\eeq

As can be seen from Table~\ref{tab:summary}, existing accelerated RIG methods are optimal for solving smooth and strongly convex finite-sum problems, since their complexity matches the lower bound in \eqref{RIG_lb}.
%
%

Notwithstanding these recent progresses, there still remain a few significant issues on the development of accelerated RIG methods.
Firstly, 
as pointed out by \cite{tang2018rest}, existing RIG methods can only establish accelerated linear convergence based on the assumption that
  the regularizer $h$ is strongly convex, and fails to benefit from the strong convexity from the data-fidelity term \cite{wang2017exploiting}.
This restrictive assumption does not apply to many important applications (e.g., Lasso models) where the loss function, rather than the regularization term, may be strongly convex. 
Specifically, when dealing with the case that only $f$ is strongly convex but not $h$, 
one may not be able to shift the strong convexity of $f$\todo{, by subtracting and adding a strongly convex term,} to construct a simple strongly convex term $h$ in the objective function. 
In fact, even if $f$ is strongly convex, some of the component functions $f_i$ may only be convex, and hence \todo{these $f_i$s} may become nonconvex after subtracting a strongly convex term.
Secondly, if the strongly convex modulus $\mu$ becomes very small, the complexity bounds of all existing RIG methods will
go to $+\infty$ (see column 2 of Table~\ref{tab:summary}), indicating that they are not robust against problem ill-conditioning.
Thirdly, for solving smooth problems without strong convexity, one has to add a strongly convex perturbation into the objective function in order to 
gain up to a factor of $\sqrt{m}$ over Nesterov's accelerated gradient method for gradient computation (see column 3 of Table~\ref{tab:summary}).
One significant difficulty for this indirect approach is 
that we do not know how to choose the perturbation parameter properly, especially for problems with unbounded feasible region
(see \cite{allen2016improved} for a discussion about a similar issue related to
SVRG applied to non-strongly convex problems). However, if one chose not to add the strongly convex perturbation term,
the best-known complexity would be given by Katyusha\textsuperscript{ns}\cite{allen2016katyusha}, which are not more advantageous over Nesterov's orginal method.
In other words, it does not gain much from randomization in terms of computational complexity.
Finally, it should be pointed out that only a few existing RIG methods, e.g., RGEM\cite{lan2018random} and \cite{kulunchakov2019estimate}, can be applied to solve stochastic finite-sum optimization problems, where one can only access the stochastic gradient of $f_i$ via a stochastic first-order oracle (SFO). 

%

\begin{table}[ht]
    \centering
    \caption{\small Summary of the recent results on accelerated RIG methods}
    \label{tab:summary}
    \small
    \begin{tabular}{|c|c|c|}
      \hline
        Algorithms &  Deterministic smooth strongly convex & Deterministic smooth convex \\
        \hline
        RPDG\cite{lan2015optimal} & ${\cal O}\left\{(m+\sqrt{\tfrac{mL}{\mu}})\log\tfrac{1}{\epsilon}\right\}$ & ${\cal O}\left\{(m + \sqrt{\tfrac{mL}{\epsilon}})\log\tfrac{1}{\epsilon}\right\}$\footnotemark[1]\\
        \hline
        Catalyst\cite{LinMaiHar15-1} & ${\cal O}\left\{(m+\sqrt{\tfrac{mL}{\mu}})\log\tfrac{1}{\epsilon}\right\}$\footnotemark[1]& ${\cal O}\left\{(m+\sqrt{\tfrac{mL}{\epsilon}})\log^2\tfrac{1}{\epsilon}\right\}$\footnotemark[1]\\
        \hline
         Katyusha\cite{allen2016katyusha} & ${\cal O}\left\{(m+\sqrt{\tfrac{mL}{\mu}})\log\tfrac{1}{\epsilon}\right\}$ &  ${\cal O}\left\{(m\log\tfrac{1}{\epsilon}+\sqrt{\tfrac{mL}{\epsilon}})\right\}$\footnotemark[1]\\
         \hline 
         Katyusha\textsuperscript{ns}\cite{allen2016katyusha} & NA & ${\cal O}\left\{\tfrac{m}{\sqrt \epsilon} + \sqrt{\tfrac{mL}{\epsilon}}\right\}$\\
        \hline
        RGEM\cite{lan2018random} & ${\cal O}\left\{(m+\sqrt{\tfrac{mL}{\mu}})\log\tfrac{1}{\epsilon}\right\}$& NA\\
        \hline
    \end{tabular}
\end{table}
\footnotetext[1]{These complexity bounds are obtained via indirect approaches, i.e., by adding strongly convex perturbation.}

\noindent{\bf Our contributions.}
In this paper, we propose a novel accelerated variance reduction type method, 
namely the {\bf va}riance-{\bf r}educed {\bf a}ccelerated {\bf g}radient (\newalgone) method, 
to solve smooth finite-sum optimization problems given in the form of \eqref{cp}. Table~\ref{tab:summary-alg} summarizes the main convergence results achieved by our \algone algorithm.

\begin{table}[ht]
    \centering
    \caption{\small Summary of the main convergence results for \algone}
    \label{tab:summary-alg}
    \small
    \begin{tabular}{|c|c|c|}
        \hline
        Problem & Relations of $m$, $1/\epsilon$ and $L/\mu$ & Unified results\\
        \hline
        \multirow{3}{*}{\makecell{\\smooth optimization problems \eqref{cp} \\
        with or without strong convexity 
        }} &
        $m\ge \tfrac{D_0}{\epsilon}$ \footnotemark[2] or $m\ge \tfrac{3L}{4\mu} $& ${\cal O}\left\{m\log\tfrac{1}{\epsilon}\right\}$ \\
        \cline{2-3}
         &$m<\tfrac{D_0}{\epsilon}\le \tfrac{3L}{4\mu}$& ${\cal O}\left\{m\log m+\sqrt{\tfrac{mL}{\epsilon}}\right\}$ \\
        \cline{2-3}
        & $m<\tfrac{3L}{4\mu}\le \tfrac{D_0}{\epsilon}$ & ${\cal O}\left\{m\log m+\sqrt{\tfrac{mL}{\mu}}\log\tfrac{D_0/\epsilon}{3L/4\mu}\right\}$ \footnotemark[3] \\
        \hline
    \end{tabular}
\end{table}
\footnotetext[2]{$D_0 = 2[\psi(x^0) - \psi(x^*)] + 3LV(x^0, x^*)$ where $x^0$ is the initial point, $x^*$ is the optimal solution of \eqref{cp} and $V$ is defined in \eqref{primal_prox}.}

\footnotetext[3]{Note that this term is less than ${\cal O}\{\sqrt{\tfrac{mL}{\mu}}\log\tfrac{1}{\epsilon}\}$.}


Firstly, for smooth convex finite-sum optimization, our proposed method exploits a direct acceleration scheme 
instead of employing any perturbation or restarting techniques to obtain desired optimal convergence results.
As shown in the first two rows of Table~\ref{tab:summary-alg}, \algone achieves the optimal rate of convergence
if the number of component functions $m$ is relatively small and/or the required accuracy is high,
while it exhibits a fast linear rate of convergence 
when the number of component 
functions $m$ is relatively large and/or the required accuracy is low, without requiring any strong convexity assumptions. 
To the best of our knowledge, this is the first time that these complexity bounds 
have been obtained through a direct acceleration scheme for smooth convex finite-sum optimization in the literature.
In comparison with existing methods using perturbation techniques, \algone does not need to know
the target accuracy or the diameter of the feasible region a priori, and thus can be used 
to solve a much wider class of smooth convex problems, e.g., those with unbounded feasible sets.
%

Secondly, we equip \algone with a unified step-size policy for smooth convex optimization 
no matter \eqref{cp} is strongly convex or not, i.e., the strongly convex modulus $\mu \ge 0$.
With this step-size policy, \algone can adjust to different classes of problems to 
achieve the best convergence results, without knowing the target accuracy and/or 
fixing the number of epochs. 
In particular, as shown in the last column of Table~\ref{tab:summary-alg}, when $\mu$ is relatively large, \algone achieves the well-known optimal linear rate of convergence.
If $\mu$ is relatively small, e.g., $\mu<\epsilon$, it obtains the accelerated convergence rate 
that is independent of the condition number $L/\mu$. 
Therefore, \algone is robust against ill-conditioning of problem~\eqref{cp}.
Moreover, our assumptions on the objective function is more general comparing to those used by other RIG methods, such as RPDG and Katyusha.
Specifically, 
 \algone does not require to keep a strongly convex regularization term in the projection, and so we
can assume that the strong convexity is associated with the smooth function $f$ instead of the simple proximal function $h(\cdot)$.
Some other advantages of \algone over existing accelerated SVRG methods, e.g., Katyusha, include that it only requires
the solution of one, rather than two, subproblems, and that it can allow the application
of non-Euclidean Bregman distance for solving all different classes of problems.

Finally, we extend \algone to solve two more general classes of finite-sum optimization problems.
We demonstrate that \algone is the first randomized method that achieves the accelerated linear rate of convergence when solving the class of problems that satisfies a certain error-bound condition rather than strong convexity. 
We then show that \algone can also be applied to solve stochastic smooth finite-sum optimization problems resulting in a sublinear rate of convergence. 

This paper is organized as follows. In Section~\ref{sec:results}, we present our proposed algorithm \algone and its convergence results for solving \eqref{cp} under different problem settings. 
In Section~\ref{sec:numerical} we provide extensive experimental results to demonstrate the advantages of \algone over several state-of-the-art methods for solving some well-known ML models, e.g., logistic regression, Lasso, etc.
We defer the proofs of the main results in Appendix~\ref{sec:analysis_deter}.

\noindent{\bf Notation and terminology.}
We use $\|\cdot\|$ to denote a general norm in $\bbr^n$ without specific mention,
and $\|\cdot\|_*$ to denote the conjugate norm of $\|\cdot\|$.
For any $p\ge 1$, $\|\cdot\|_p$ denotes the standard $p$-norm in $\bbr^n$, i.e.,
$
\|x\|^p_p=\tsum_{i=1}^{n}|x_i|^p, \  \mbox{for any } x\in \bbr^n.
$
%
%
For a given strongly convex function $w:X\rightarrow \bbr$ with modulus $1$ w.r.t. an arbitrary norm $\|\cdot\|$, we define a {\sl prox-function} associated with $w$ as
\beq\label{primal_prox}
V(x^0,x)\equiv V_{w}(x^0,x):=w(x)-\left[w(x^0)+\langle w'(x^0),x-x^0\rangle\right],
\eeq
where $w'(x^0)\in \partial w(x^0)$ is any subgradient of $w$ at $x^0$.
By the strong convexity of $w$, we have
\beq\label{V_strong}
V(x^0,x)\ge \tfrac{1}{2}\|x-x^0\|^2, \ \ \forall x,x^0\in X.
\eeq
Notice that $V(\cdot,\cdot)$ described above is different from the standard definition for Bregman distance~\cite{Breg67,AuTe06-1,BBC03-1,Kiw97-1,censor1981iterative} in the sense that $w$ is not necessarily differentiable.
Throughout this paper, we assume that the prox-mapping associated with $X$ and $h$, given by
\beq\label{prox_mapping}
\argmin_{x\in X}\left\{\gamma[\langle g,x\rangle+ h(x)+ \mu V(\underline x_0,x)] + V(x_{0},x) \right\},
\eeq
can be easily computed for any $\underline x_0, x_0 \in X, g\in \bbr^{n}, \mu\ge 0, \gamma>0$.
We denote logarithm with base $2$ as $\log$. For any real number $r$, $\lceil r \rceil$ and $\lfloor r \rfloor$ denote the ceiling and floor of
$r$. 

\setcounter{equation}{0}

\section{Algorithms and main results}\label{sec:results}
This section contains two subsections.
We first present in Subsection~\ref{sec:alg-deterministic} a unified optimal \algone for solving the finite-sum problem given in \eqref{cp} as well as its optimal convergence results.
Subsection~\ref{sec:gen} is devoted to the discussion of several extensions of \newalgone.
Throughout this section, we assume that each component function $f_i$ is smooth with $L_i$-Lipschitz continuous gradients over $X$, i.e., \eqref{def_smoothness} holds for all component functions.
Moreover, we assume that the objective function $\psi(x)$ is possibly strongly convex, in particular, for $f(x)=\tfrac{1}{m}\tsum_{i=1}^{m}f_i(x)$, $\exists \mu\ge 0$ s.t.
\beq\label{def_strongconvexity}
f(y)\ge f(x)+\langle \nabla f(x), y-x\rangle + \mu V(x,y), \forall x, y\in X.
\eeq
Note that we assume the strong convexity of $\psi$ comes from $f$, and the simple function $h$ is not necessarily strongly convex.
Clearly the strong convexity of $h$, if any, can be shifted to $f$ since $h$ is assumed to be simple and its structural information is transparent to us. 
Also observe that \eqref{def_strongconvexity} is defined based on a generalized Bregman distance,
and together with \eqref{V_strong} they imply the standard definition of strong convexity w.r.t. Euclidean norm.

\subsection{\algone for convex finite-sum optimization}\label{sec:alg-deterministic}
The basic scheme of \algone is formally described in Algorithm~\ref{algVRASGD}.
In each epoch (or outer loop), it first computes the full gradient $\nabla f(\tilde x)$ at the point $\tilde x$ (cf. Line~\ref{eqn:fullgradient}), which 
will then be repeatedly used to define a gradient estimator $G_t$ at each iteration of the inner loop (cf. Line~\ref{eqn:estgradient}). This is the well-known variance reduction technique employed by many algorithms (e.g., \cite{JohnsonZhang13-1,xiao2014proximal,allen2016katyusha,hazan2016variance}).
The inner loop has a similar algorithmic scheme to the accelerated stochastic approximation algorithm \cite{Lan08-1,GhaLan12-2a,GhaLan13-1}
with a constant step-size policy. Indeed, the parameters used in the inner loop, i.e., $\{\gamma_s\}, \{\alpha_s\}$, and $\{p_s\}$, 
only depend on the index of epoch $s$. Each iteration of the inner loop requires the gradient information of only one randomly 
selected component function $f_{i_t}$, and maintains three primal sequences, $\{\underline x_t\}, \{x_t\}$ and $\{\bar x_t\}$, which play important role in the acceleration scheme.

\begin{algorithm}[h]
  \caption{The VAriance-Reduced Accelerated Gradient (\newalgone) method}
  \label{algVRASGD}
  \begin{algorithmic}[1]
\REQUIRE
 $x^0\in X, \{T_s\}, \{\gamma_s\}, \{\alpha_s\}, \{p_s\}, \{\theta_t\}$, and a probability distribution $Q=\{q_1,\ldots,q_m\}$ on $\{1,\ldots,m\}$.
\STATE Set $\tilde x^0 = x^0$.
\FOR {$s=1,2,\ldots$}
\STATE Set $\tilde{x}=\tilde{x}^{s-1}$ and $\tilde{g} = \nabla f(\tilde{x})$\label{eqn:fullgradient}.
\STATE Set $x_0=x^{s-1}$, $\bar x_0 = \tilde x$ and $T = T_s$.
\FOR {$t=1,2,\ldots,T$}
\STATE 
Pick $i_t\in \{1,\dots,m\}$ randomly according to $Q$. 
\STATE $\underline x_t = \left[(1+\mu \gamma_s)(1  - \alpha_s - p_s) \bar x_{t-1}  + \alpha_s x_{t-1}+ (1+\mu \gamma_s) p_s \tilde x\right]/ [1+\mu \gamma_s(1-\alpha_s)]$\label{eqn:underlinex}.
\STATE $G_t = (\nabla f_{i_t}(\underline x_{t}) - \nabla f_{i_t}(\tilde{x}))/(q_{i_t} m)  + \tilde{g}$\label{eqn:estgradient}.
\STATE $x_{t} = \arg \min_{x \in X} \left\{ \gamma_s \left[\langle G_t, x \rangle
+ h(x) + \mu V(\underline x_t, x)\right] + V(x_{t-1}, x) \right\}$\label{eqn:xt}.
\STATE $\bar x_{t} =   (1 - \alpha_s-p_s) \bar x_{t-1}  + \alpha_s x_{t} + p_s \tilde x$\label{eqn:barx}.
\ENDFOR
\STATE Set  $x^s = x_{T}$ and $\tilde{x}^s =\tsum_{t=1}^{T}(\theta_t \bar x_t)/\tsum_{t=1}^{T} \theta_t$.
\ENDFOR
  \end{algorithmic}
\end{algorithm}
Note that \algone is closely related to stochastic mirror descent method \cite{NJLS09-1,nemyud:83} and SVRG\cite{JohnsonZhang13-1,xiao2014proximal}. By setting $\alpha_s= 1$ and $p_s = 0$, Algorithm~\ref{algVRASGD} simply combines the variance reduction technique with stochastic mirror descent. In this case, the algorithm only maintains one primal sequence $\{x_t\}$ and possesses the non-accelerated rate of convergence ${\cal O}\{(m+L/\mu)\log(1/\epsilon)\}$ for solving \eqref{cp}.
Interestingly, if we use Euclidean distance instead of prox-function $V(\cdot,\cdot)$ to update $x_t$ and set $X = \bbr^n$,  Algorithm~\ref{algVRASGD} will further reduce to prox-SVRG proposed in \cite{xiao2014proximal}.

It is also interesting to observe the difference between \algone and Katyusha \cite{allen2016katyusha} because both are
accelerated variance reduction methods.
Firstly, while Katyusha needs to assume that the strongly convex term is specified as in the form of a simple proximal function, e.g., $\ell_1$/$\ell_2$-regularizer,
\algone assumes that $f$ is possibly strongly convex, 
which solves an open issue of the existing accelerated RIG methods pointed out by \cite{tang2018rest}.
Therefore, the momentum steps in Lines~\ref{eqn:underlinex} and \ref{eqn:barx} are different from Katyusha.
Secondly, \algone has a less computationally expensive algorithmic scheme. 
Particularly, \algone only needs to solve one proximal mapping (cf. Line~\ref{eqn:xt}) per iteration even if $f$ is strongly convex, while Katyusha requires to solve two proximal mappings per iteration.
Thirdly, \algone incorporates a prox-function $V$ defined in \eqref{primal_prox} rather than the Euclidean distance in the proximal mapping to update $x_t$. 
This allows the algorithm to take advantage of the geometry of the constraint set $X$ when performing projections. 
However, Katyusha cannot be fully adapted to the non-Euclidean setting because its second proximal mapping 
must be defined using the Euclidean distance regardless the strong convexity of $\psi$.
Finally, we will show in this section that \algone can achieve a much better rate of convergence than Katyusha for smooth convex finite-sum optimization by using a novel approach to specify step-size and to schedule epoch length.

We first discuss the case when $f$ is not necessarily strongly convex, i.e., $\mu = 0$ in \eqref{def_strongconvexity}.
In Theorem~\ref{Them:main-deter-smooth}, we suggest one way to specify the algorithmic parameters, including $\{q_i\}$, $\{\theta_t\}$, $\{\alpha_s\}$, $\{\gamma_s\}$, $\{p_s\}$ and $\{T_s\}$,
for \algone to solve smooth convex problems given in the form of \eqref{cp}, and discuss its convergence properties of the resulting algorithm. 
We defer the proof of this result in Appendix~\ref{sec:deter-sm-pf}.

\begin{theorem}[\bf Smooth finite-sum optimization]\label{Them:main-deter-smooth}
Suppose that the probabilities $q_i$'s are set to $L_i / \tsum_{i=1}^m L_i$ for $i = 1, \ldots, m$, and weights $\{\theta_t\}$ are set as
\beq \label{def_theta_acc_SVRG}
\theta_t =
\begin{cases}
\tfrac{\gamma_{s}}{\alpha_{s}} (\alpha_{s} + p_{s}) & 1 \le t \le T_s-1\\
 \tfrac{\gamma_s}{\alpha_s} & t=T_s.
 \end{cases}
\eeq
Moreover, let us denote $s_0 := \lfloor \log m\rfloor+1$ and set parameters $\{T_s\}$, $\{\gamma_s\}$ and $\{p_s\}$ as
\begin{align}\label{parameter-deter-smooth1}
T_s = \begin{cases}
2^{s-1}, & s \le s_0\\
T_{s_0}, & s > s_0
\end{cases}, \
\gamma_s = \tfrac{1}{3 L \alpha_s}, \
\ \mbox{and} \
p_s = \tfrac{1}{2}, \ \mbox{with}
\end{align}
\begin{align}\label{parameter-deter-alpha-sm}
    \alpha_s =
\begin{cases}
\tfrac{1}{2}, & s \le s_0\\
\tfrac{2}{s-s_0+4},& s > s_0
\end{cases}.
\end{align}
Then the total number of gradient evaluations of $f_i$ performed by Algorithm~\ref{algVRASGD} to find a stochastic $\epsilon$-solution of \eqref{cp}, i.e., a point $\bar x\in X$ s.t. $\bbe[\psi(\bar x)-\psi^*]\le \epsilon$,  can be bounded by
\beq\label{complexity-deter-sm}
\bar N := \begin{cases}
{\cal O}\left\{  m \log \tfrac{D_0}{\epsilon} \right\}, & m \ge D_0/\epsilon,\\
 {\cal O} \left\{m \log m + \sqrt{\tfrac{m D_0}{\epsilon}} \right\}, &  m < D_0/\epsilon,
\end{cases}
\eeq
where $D_0$ is defined as
\beq \label{def_D_0}
D_0:= 2[\psi(x^0) - \psi(x^*)] + 3L V(x^0, x^*).
\eeq
\end{theorem}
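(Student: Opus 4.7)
The strategy is to establish a one-epoch Lyapunov recursion and then telescope it through the two parameter regimes $s\le s_0$ and $s>s_0$. Throughout I use $\mu=0$. For a fixed epoch $s$ and a fixed inner iteration $t$, I would combine three ingredients: (i) unbiasedness $\bbe_{i_t}[G_t]=\nabla f(\underline x_t)$, which follows from $q_i=L_i/\sum_j L_j$; (ii) the importance-sampling variance bound $\bbe_{i_t}\|G_t-\nabla f(\underline x_t)\|_*^2\le 2L\bigl[f(\tilde x)-f(\underline x_t)-\langle\nabla f(\underline x_t),\tilde x-\underline x_t\rangle\bigr]$ coming from the $L_i$-smoothness of each component; and (iii) the standard three-point prox inequality for Line~\ref{eqn:xt}, yielding $\gamma_s\langle G_t,x_t-x\rangle+\gamma_s[h(x_t)-h(x)]\le V(x_{t-1},x)-V(x_t,x)-V(x_{t-1},x_t)$ for every $x\in X$. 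Using $L$-smoothness of $f$ to bound $f(\bar x_t)\le f(\underline x_t)+\langle \nabla f(\underline x_t),\bar x_t-\underline x_t\rangle+\tfrac{L}{2}\|\bar x_t-\underline x_t\|^2$ together with the identity $\bar x_t-\underline x_t=\alpha_s(x_t-x_{t-1})$ that falls out of Lines~\ref{eqn:underlinex}--\ref{eqn:barx} when $\mu=0$, these combine into a per-iteration bound of the form
\[
\tfrac{\gamma_s}{\alpha_s}\bigl[\psi(\bar x_t)-\psi(x^*)\bigr]+V(x_t,x^*)\le \tfrac{\gamma_s(1-\alpha_s-p_s)}{\alpha_s}\bigl[\psi(\bar x_{t-1})-\psi(x^*)\bigr]+\tfrac{\gamma_s p_s}{\alpha_s}\bigl[\psi(\tilde x)-\psi(x^*)\bigr]+V(x_{t-1},x^*),
\]
once $\gamma_s=1/(3L\alpha_s)$ is used to simultaneously absorb the smoothness slack $\tfrac{L\alpha_s^2}{2}\|x_t-x_{t-1}\|^2$ and the variance slack against the negative $V(x_{t-1},x_t)\ge\tfrac12\|x_t-x_{t-1}\|^2$ term.

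Next I sum this inequality over $t=1,\ldots,T_s$ against the weights $\theta_t$ of \eqref{def_theta_acc_SVRG}. The weights are designed precisely so that the $\psi(\bar x_{t-1})$ and $\psi(\bar x_t)$ coefficients telescope, with the special boundary value $\theta_{T_s}=\gamma_s/\alpha_s$ producing clean output terms. Applying Jensen's inequality to the weighted average $\tilde x^s=\sum_t\theta_t\bar x_t/\sum_t\theta_t$ and taking full expectation yields a per-epoch Lyapunov recursion of the shape
\[
\bbe[\psi(\tilde x^s)-\psi^*]+\tfrac{\alpha_s}{\gamma_s T_s}\bbe[V(x^s,x^*)]\le (1-\alpha_s)\bbe[\psi(\tilde x^{s-1})-\psi^*]+\tfrac{\alpha_s}{\gamma_s T_s}\bbe[V(x^{s-1},x^*)]
\]
after substituting $p_s=\tfrac12$ and using convexity to rewrite $\psi(\tilde x)$ as $\psi(\tilde x^{s-1})$.

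Telescoping then splits into two phases. For $s\le s_0$, the settings $\alpha_s=p_s=\tfrac12$, $\gamma_s=\tfrac{2}{3L}$, and $T_s=2^{s-1}$ make $\gamma_sT_s=2^s/(3L)$ double at each epoch, so the Lyapunov function contracts by a constant factor per epoch, giving $\bbe[\psi(\tilde x^s)-\psi^*]=O(D_0\cdot 2^{-s})$ while each epoch costs $O(m+T_s)$ gradient evaluations. If $m\ge D_0/\epsilon$, running $\tilde s=\lceil\log(D_0/\epsilon)\rceil\le s_0$ epochs reaches accuracy $\epsilon$ at total cost $\sum_{s\le\tilde s}(m+2^{s-1})=O(m\log(D_0/\epsilon))$, matching the first branch of \eqref{complexity-deter-sm}. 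Otherwise, I complete phase one in $O(m\log m)$ gradients with residual potential $O(LD_0/m)$ and enter phase two where $\alpha_s=2/(s-s_0+4)$ and $T_s\equiv T_{s_0}\asymp m$; here the recursion has the standard accelerated Nesterov form, and multiplying by growing weights $w_s\propto(s-s_0+3)(s-s_0+4)$ telescopes to an $O(L/k^2)$-type decay after $k$ further epochs. Choosing $k=O(\sqrt{LD_0/(m\epsilon)})$ gives $\epsilon$-accuracy at an additional cost of $O(km)=O(\sqrt{mD_0/\epsilon})$, producing the second branch of \eqref{complexity-deter-sm}.

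The main obstacle I anticipate is the calibration of the constant $3$ in $\gamma_s=1/(3L\alpha_s)$: the same step-size must simultaneously absorb the $L\alpha_s^2$ smoothness slack from the quadratic upper bound on $f(\bar x_t)$ and the $2L$ variance slack of the SVRG-style estimator, while still leaving a positive margin so that $-V(x_{t-1},x_t)$ can produce the needed telescoping structure with the prox terms. A secondary subtle point is making sure that the endpoint of phase one, repackaged as a combined potential, feeds correctly into the phase-two initial condition despite the jump in $(\alpha_s,\gamma_s,T_s)$ at $s=s_0$; this requires the $V(x^{s_0},x^*)$ piece to be controlled by exactly the right constant so that the Nesterov-style weights of phase two do not reintroduce an additional $\log m$ factor into the final complexity.
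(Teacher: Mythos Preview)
Your approach is essentially the paper's: the per-iteration inequality you state is exactly Lemma~\ref{Lemma_VARSGD_iter} with $\mu=0$, the $\theta_t$-weighted summation is Lemma~\ref{lem:deter_smooth_one_epoch}, and the two-phase complexity calculation mirrors the paper's case split. Two points are worth tightening.

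First, the variance is not absorbed by the negative quadratic $\tfrac12\|x_t-x_{t-1}\|^2$; that term is consumed (via Young's inequality together with the smoothness slack) to convert the stochastic cross term $-\alpha_s\langle\delta_t,x_t-x_{t-1}\rangle$ into $\tfrac{\alpha_s\gamma_s}{2(1-L\alpha_s\gamma_s)}\|\delta_t\|_*^2$. The resulting variance bound $2L[f(\tilde x)-l_f(\underline x_t,\tilde x)]$ is then absorbed by the \emph{convexity gap} hidden in the $p_s$-term: one has $p_s\,l_f(\underline x_t,\tilde x)+\tfrac{L\alpha_s\gamma_s}{1-L\alpha_s\gamma_s}[f(\tilde x)-l_f(\underline x_t,\tilde x)]\le p_s f(\tilde x)$ precisely when $p_s\ge \tfrac{L\alpha_s\gamma_s}{1-L\alpha_s\gamma_s}$, and the choice $\gamma_s=1/(3L\alpha_s)$, $p_s=\tfrac12$ makes this tight. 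This is the reason $p_s$ enters the scheme, and it explains your ``calibration of the constant~3''.

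Second, the per-epoch recursion you wrote,
\[
\bbe[\psi(\tilde x^s)-\psi^*]+\tfrac{\alpha_s}{\gamma_s T_s}\bbe[V(x^s,x^*)]\le (1-\alpha_s)\,\bbe[\psi(\tilde x^{s-1})-\psi^*]+\tfrac{\alpha_s}{\gamma_s T_s}\bbe[V(x^{s-1},x^*)],
\]
is equivalent to $\sum_t\theta_t=\gamma_sT_s/\alpha_s$, which requires $\alpha_s+p_s=1$. That holds only in phase~1. In phase~2 the paper keeps the \emph{unnormalized} inequality ${\cal L}_s\,\bbe[\psi(\tilde x^s)-\psi^*]\le {\cal R}_s\,\bbe[\psi(\tilde x^{s-1})-\psi^*]+\bbe[V(x^{s-1},x^*)-V(x^s,x^*)]$ with ${\cal L}_s,{\cal R}_s$ as in \eqnok{def_LHs}, and simply verifies $w_s={\cal L}_s-{\cal R}_{s+1}\ge 0$ for every $s$ so that the sum telescopes down to ${\cal R}_1$ and $V(x^0,x^*)$. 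The $O(1/k^2)$ decay in phase~2 then comes from the explicit lower bound ${\cal L}_s\ge (s-s_0+4)^2m/(48L)$, and the hand-off at $s=s_0$ is automatic because both phases share the same unnormalized potential. Your ``growing weights $w_s\propto(s-s_0+3)(s-s_0+4)$'' idea amounts to the same thing, but working in unnormalized form avoids having to track how the $V$-coefficient changes across the phase boundary.
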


We now make a few observations regarding the results obtained in Theorem~\ref{Them:main-deter-smooth}.
Firstly, as mentioned earlier, whenever the required accuracy $\epsilon$ is low and/or the number of components $m$ is large, \algone can achieve a fast linear rate of convergence even under the assumption that the objective function is not strongly convex. 
Otherwise, \algone achieves an optimal sublinear rate of convergence with complexity bounded by ${\cal O} \{\sqrt{m D_0/\epsilon} + m \log m \}$. 
Secondly, whenever $\sqrt{m D_0/\epsilon}$ is dominating in the second case of \eqref{complexity-deter-sm}, \algone can save up to ${\cal O}(\sqrt m)$ 
gradient evaluations of $f_i$ than the optimal deterministic first-order methods for solving \eqref{cp}. To the best our knowledge,
\algone is the first accelerated RIG in the literature to obtain such convergence results by directly solving \eqref{cp}.
Other existing accelerated RIG methods, such as RPDG\cite{lan2015optimal} and Katyusha\cite{allen2016katyusha}, require the application of
perturbation and restarting techniques to obtain such convergence results.
\todo{Thirdly, \algone also supports mini-batch approach where the component function $f_i$ is associated with a mini-batch of data samples instead of a single data sample. 
In a more general case, for a given mini-batch size $b$, we assume that the component functions can be split into subsets where each subset contains exactly $b$ number of component functions. Therefore, one can replace Line~\ref{eqn:estgradient} in Algorithm~\ref{algVRASGD} by $G_t = \tfrac{1}{b}\tsum_{i_t\in S_b} (\nabla f_{i_t}(\underline x_t) - \nabla f_{i_t}(\tilde x))/(q_{i_t}m) + \tilde g$ with $S_b$ being the selected subset and $|S_b|=b$ and adjust the appropriate parameters to obtain the mini-batch version of \newalgone.
The mini-batch \algone can obtain parallel linear speedup of factor $b$ whenever the mini-batch size $b\le \sqrt{m}$.}

Next we consider the case when $f$ is possibly strongly convex, including the situation
when the problem is almost not strongly convex, i.e., $\mu \approx 0$.
In the latter case, the term $\sqrt{mL/\mu}\log(1/\epsilon)$ will be dominating in the complexity of existing accelerated RIG methods (e.g., \cite{lan2015optimal,lan2018random,allen2016katyusha,LinMaiHar15-1})
and will tend to $\infty$ as $\mu$ decreases. Therefore, these complexity bounds are significantly worse than \eqref{complexity-deter-sm} obtained by simply treating \eqref{cp} as smooth convex problems.
Moreover, $\mu\approx 0$ is very common in ML applications.
In Theorem~\ref{Them:main-deter-sc}, we provide a unified step-size policy which allows \algone to achieve optimal rate of convergence for finite-sum optimization in \eqref{cp} regardless of its strong convexity, and hence it can achieve stronger rate of convergence than existing accelerated RIG methods if the condition number $L/\mu$ is very large.
The proof of this result can be found in Appendix~\ref{sec:deter-unified-pf}.

\begin{theorem}[\bf A unified result for convex finite-sum optimization]\label{Them:main-deter-sc}
Suppose that the probabilities $q_i$'s are set to $L_i / \tsum_{i=1}^m L_i$ for $i = 1, \ldots, m$.
Moreover, let us denote $s_0 := \lfloor \log m\rfloor+1$ and assume that the
weights $\{\theta_t\}$ are set to \eqref{def_theta_acc_SVRG} if $1\le s \le s_0$ or $s_0< s \le s_0+\sqrt{\tfrac{12L}{m \mu}}-4, \ m < \tfrac{3L}{4\mu}$.
Otherwise, they are set to
\beq \label{def_theta_acc_SVRG_sc}
\theta_t =
\begin{cases}
\Gamma_{t-1} - (1 - \alpha_s - p_s) \Gamma_{t}, & 1 \le t \le T_s-1,\\
\Gamma_{t-1}, & t = T_s,
\end{cases}
\eeq
where $\Gamma_t= (1+\mu\gamma_s)^t$.
If the parameters $\{T_s\}$, $\{\gamma_s\}$ and $\{p_s\}$ set  to \eqref{parameter-deter-smooth1} with 
\begin{align}\label{parameter-deter-alpha-unified}
\alpha_s =
\begin{cases}
\tfrac{1}{2}, & s \le s_0,\\
\max\left\{\tfrac{2}{s-s_0+4}, \min\{\sqrt{\tfrac{m \mu}{3L}}, \tfrac{1}{2}\}\right\},& s > s_0,
\end{cases}
\end{align}
then the total number of gradient evaluations of $f_i$ performed by Algorithm~\ref{algVRASGD} to find a stochastic $\epsilon$-solution of \eqref{cp} can be bounded by
\beq\label{complexity-deter-unified}
\bar N := \begin{cases}
{\cal O}\left\{  m \log \tfrac{D_0}{\epsilon} \right\}, & m \ge \tfrac{D_0}{\epsilon} \mathrm{~or~} m\ge \tfrac{3L}{4\mu},\\
 {\cal O} \left\{m \log m +\sqrt{\tfrac{m D_0}{\epsilon}} \right\}, &   m < \tfrac{D_0}{\epsilon} \leq \tfrac{3L}{4\mu},\\
  {\cal O} \left\{m \log m +\sqrt{\tfrac{m L}{\mu}}\log \tfrac{D_0/\epsilon}{3L/4\mu} \right\}, & m < \tfrac{3L}{4\mu}  \le \tfrac{D_0}{\epsilon}.
\end{cases}
\eeq
where $D_0$ is defined as in \eqref{def_D_0}.
\end{theorem}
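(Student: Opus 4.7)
The plan is to prove Theorem~\ref{Them:main-deter-sc} by combining a per-iteration accelerated-gradient-style inequality with a weighted telescoping within each epoch, and then summing over three phases of epochs that correspond directly to the three cases of \eqref{complexity-deter-unified}. The per-iteration step starts from the optimality condition of the prox-update in Line~\ref{eqn:xt}, which gives, for any $x \in X$,
\begin{equation*}
\gamma_s\bigl[\langle G_t, x_t - x\rangle + h(x_t) - h(x) + \mu V(\underline x_t, x_t) - \mu V(\underline x_t, x)\bigr] \le V(x_{t-1}, x) - V(x_t, x) - V(x_{t-1}, x_t).
\end{equation*}
Combining this with the $L_{i_t}$-smoothness of $f_{i_t}$, the three-point identity linking $\underline x_t$, $x_t$, $\bar x_t$ through the momentum in Lines~\ref{eqn:underlinex} and \ref{eqn:barx}, and the variance bound $\bbe[\|G_t - \nabla f(\underline x_t)\|_*^2] \le 2L[f(\tilde x) - f(\underline x_t) - \langle \nabla f(\underline x_t), \tilde x - \underline x_t\rangle]$ granted by the importance sampling $q_i = L_i/\sum_j L_j$, I would obtain a one-iteration inequality bounding $\gamma_s[\psi(\bar x_t)-\psi^*]$ by a decrement in $V(\cdot,x^*)$ plus terms proportional to $\psi(\bar x_{t-1}) - \psi^*$ and $\psi(\tilde x) - \psi^*$.

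Multiplying by $\theta_t$ and summing over $t=1,\ldots,T_s$ then produces a one-epoch recursion on a Lyapunov functional of the form $\Theta_s[\psi(\tilde x^s)-\psi^*] + \bbe[V(x^s,x^*)]$, where $\Theta_s = \sum_t \theta_t$. The two weight schedules in the theorem are precisely those that make this summation telescope: \eqref{def_theta_acc_SVRG} cancels the cross terms when strong convexity is not invoked, reproducing the one-epoch bound of Theorem~\ref{Them:main-deter-smooth}; while \eqref{def_theta_acc_SVRG_sc} with $\Gamma_t = (1+\mu\gamma_s)^t$ absorbs the $\mu V(\underline x_t,x^*)$ contributions and turns the epoch into a geometric contraction of factor $(1+\mu\gamma_s)^{-T_s}$.

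The three cases of \eqref{complexity-deter-unified} then come from three telescoping regimes. In Phase~I ($1\le s\le s_0$), doubling $T_s$ with $\alpha_s=p_s=1/2$ reproduces the warm-up of Theorem~\ref{Them:main-deter-smooth}, costing $\calO(m)$ total gradient evaluations to halve the gap a logarithmic number of times; if $m\ge D_0/\epsilon$ or $m\ge 3L/(4\mu)$, iterating Phase~I already yields the first bound. In Phase~II ($s>s_0$ with $\alpha_s=2/(s-s_0+4)$), the weights \eqref{def_theta_acc_SVRG} combined with constant epoch length $T_{s_0}=\Theta(m)$ give $\Theta_s = \Omega((s-s_0)^2/L)$ and hence $\bbe[\psi(\tilde x^s)-\psi^*] = \calO(L/(s-s_0)^2)$; solving for the smallest $s$ achieving $\epsilon$-accuracy produces the middle bound $\calO(m\log m + \sqrt{mD_0/\epsilon})$. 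In Phase~III ($\alpha_s=\sqrt{m\mu/(3L)}$, triggered once $s-s_0 > \sqrt{12L/(m\mu)}-4$ and $m<3L/(4\mu)$), the weights \eqref{def_theta_acc_SVRG_sc} convert each epoch into a contraction by $(1+\mu\gamma_s)^{-T_{s_0}} = (1+\Theta(\sqrt{\mu/(mL)}))^{-m}$, which is a constant strictly less than $1$; $\calO(\log((D_0/\epsilon)/(3L/(4\mu))))$ such epochs then deliver the last bound.

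The main obstacle I anticipate is stitching the three phases together so that the potential at the end of one phase serves as a legitimate warm start for the next. In particular, the switch from weights \eqref{def_theta_acc_SVRG} to \eqref{def_theta_acc_SVRG_sc} at $s=\lceil s_0 + \sqrt{12L/(m\mu)} - 4\rceil$ must satisfy a compatibility inequality of the form $\Theta_{s-1}^{\text{old}} \ge (1-\alpha_s-p_s)\,\Theta_s^{\text{new}}$ so that no spurious factor appears along the epoch chain, and one must simultaneously verify $\alpha_s\le 1/2$ and $\alpha_s+p_s\le 1$ throughout (so that the per-iteration bound remains valid and the convex combination in Line~\ref{eqn:underlinex} makes sense). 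Once this boundary matching is in place, the remaining bookkeeping — that each epoch beyond $s_0$ costs $T_{s_0}+m = \Theta(m)$ gradient evaluations and that Phase~II is skipped entirely when $m\ge 3L/(4\mu)$ — is routine, and summing the gradient counts across the relevant phases yields \eqref{complexity-deter-unified}.
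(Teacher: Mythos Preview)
Your overall strategy---per-iteration accelerated inequality from the prox-step optimality condition, weighted telescoping within each epoch using the two $\theta_t$ schedules, and a phase-by-phase accounting---is the same as the paper's. However, your Phase~III analysis contains a quantitative error that would yield a complexity strictly \emph{better} than the one stated in \eqref{complexity-deter-unified}, so something must be wrong.

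In Phase~III you have $\gamma_s = 1/\sqrt{3mL\mu}$ and $T_{s_0}\in[m/2,m]$, so the per-epoch contraction factor is
\[
(1+\mu\gamma_s)^{-T_{s_0}} = \bigl(1+\sqrt{\mu/(3mL)}\bigr)^{-T_{s_0}} \approx \exp\bigl(-c\sqrt{m\mu/L}\bigr).
\]
Under the Phase~III hypothesis $m<3L/(4\mu)$, the exponent $\sqrt{m\mu/L}$ can be arbitrarily small, so this factor is \emph{not} a universal constant strictly less than~$1$; it tends to~$1$ as $m\mu/L\to 0$. Consequently you need $\Theta\bigl(\sqrt{L/(m\mu)}\log((D_0/\epsilon)/(3L/(4\mu)))\bigr)$ Phase~III epochs, not $\calO(\log(\cdot))$; at $\Theta(m)$ gradient calls per epoch this produces exactly the $\sqrt{mL/\mu}\log(\cdot)$ term in the third line of \eqref{complexity-deter-unified}. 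Your claimed $\calO(\log(\cdot))$ epochs would give only $\calO(m\log(\cdot))$ gradients, which is impossible since $\sqrt{mL/\mu}>m$ in this regime.

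Relatedly, the case $m\ge 3L/(4\mu)$ with $m<D_0/\epsilon$ is not covered by ``iterating Phase~I.'' Here $\alpha_s\equiv 1/2$, but after $s_0$ the epoch length freezes at $T_{s_0}$ and the weights switch to \eqref{def_theta_acc_SVRG_sc}. This is precisely the regime where the per-epoch factor $(1+2\mu/(3L))^{T_{s_0}}\ge 1+2\mu T_{s_0}/(3L)\ge 5/4$ \emph{is} a genuine constant, yielding the first line of \eqref{complexity-deter-unified}; the paper treats this as a separate case. In short: you have correctly identified the two telescoping mechanisms, but the constant-contraction regime is $m\ge 3L/(4\mu)$ (your missing case), while the $m<3L/(4\mu)$ regime gives only a $(1-\Theta(\sqrt{m\mu/L}))$ contraction per epoch.
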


Observe that the complexity bound \eqref{complexity-deter-unified} is a unified convergence result for \algone to solve deterministic smooth convex finite-sum optimization problems \eqref{cp}.
When the strong convex modulus $\mu$ of the objective function is large enough, i.e., $3L/\mu<D_0/\epsilon$, \algone exhibits an 
optimal linear rate of convergence since the third case of \eqref{complexity-deter-unified} matches the lower bound \eqref{RIG_lb} for RIG methods.
If $\mu$ is relatively small, \algone treats the finite-sum problem \eqref{cp} as a smooth problem without strong convexity, which leads to the same 
complexity bounds as in Theorem~\ref{Them:main-deter-smooth}.
It should be pointed out that the parameter setting proposed in Theorem~\ref{Them:main-deter-sc} does not require the values of $\epsilon$ and $D_0$ given
a priori.

\subsection{Generalization of \algone}\label{sec:gen}
In this subsection, we extend \algone to solve two general classes of finite-sum optimization problems as well as establishing its convergence properties for these problems. 

\noindent{\bf Finite-sum problems under error bound condition.}
We investigate a class of weakly strongly convex problems, 
i.e., $\psi(x)$ is smooth convex and satisfies the error bound condition given by
\begin{equation}\label{cond:eb}
V(x,X^*)\leq \tfrac{1}{\bar{\mu}} (\psi(x)-\psi^*), ~\forall x\in X,
\end{equation}
where $X^*$ denotes the set of optimal solutions of \eqref{cp}.
Many optimization problems satisfy \eqref{cond:eb}, for instance, linear systems, quadratic programs, linear matrix inequalities and composite problems (outer: strongly convex, inner: polyhedron functions), see \cite{CongDDang:452} and Section 6 of \cite{necoara2018linear} for more examples.
Although these problems are not strongly convex, by properly restarting \algone we can solve them with an accelerated linear rate of convergence, the best-known complexity result to solve this class of problems so far. We formally present the result in Theorem~\ref{Them:main-deter-error}, whose proof is given in Appendix~\ref{sec:error_bound_proof}. 

\begin{theorem}[\bf Convex finite-sum optimization under error bound]\label{Them:main-deter-error}
Assume that the probabilities $q_i$'s are set to $L_i / \tsum_{i=1}^m L_i$ for $i = 1, \ldots, m$, and $\theta_t$ are defined as \eqref{def_theta_acc_SVRG}.
Moreover,
let us set parameters $\{\gamma_s\}$, $\{p_s\}$ and $\{\alpha_s\}$ as in \eqref{parameter-deter-smooth1} and \eqref{parameter-deter-alpha-sm} with $\{T_s\}$ being set as
\begin{align}\label{parameter-deter-error}
T_s = \begin{cases}
T_12^{s-1}, & s \le 4\\
8T_{1}, & s > 4
\end{cases},
\end{align}
where 
$T_1=\min\{m,\tfrac{L}{\bar{\mu}}\}$.
Then under condition \eqref{cond:eb}, for any $x^* \in X^*$, $s=4+4\sqrt{\tfrac{L}{\bar{\mu}m}}$,
\beq \label{results:acc_SVRG_error}
\bbe[\psi(\tilde{x}^s) - \psi(x^*)] \le
\tfrac{5}{16} [\psi({x}^0) - \psi(x^*)].
\eeq
Moreover, if we restart \algone every time it runs $s$ iterations for $k=\log\tfrac{\psi({x}^0) - \psi(x^*)}{\epsilon}$ times,
the total number of gradient evaluations of $f_i$ to find a stochastic $\epsilon$-solution of \eqref{cp} can be bounded by
\beq \label{results:acc_SVRG_error_sfo}
\bar N : =k(\tsum_s(m+T_s))=
{\cal O}\left\{\big(m+\sqrt{\tfrac{m L}{\bar{\mu}}}\big)\log \tfrac{\psi({x}^0) - \psi(x^*)}{\epsilon}\right\}.
\eeq
\end{theorem}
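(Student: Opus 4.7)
The plan is to reduce the error-bound case to the smooth convex case (Theorem~\ref{Them:main-deter-smooth}), replacing $V(x^0,x^*)$ by $\psi(x^0)-\psi^*$ via the error-bound inequality \eqref{cond:eb}, and then to wrap the resulting constant-factor contraction in a geometric restart argument. Concretely, I would first revisit the per-epoch descent recursion that underlies Theorem~\ref{Them:main-deter-smooth}: with $\alpha_s$, $\gamma_s$, $p_s$ chosen as in \eqref{parameter-deter-smooth1}--\eqref{parameter-deter-alpha-sm} and the weights $\theta_t$ from \eqref{def_theta_acc_SVRG}, one obtains a bound of the form
\[
\bbe[\psi(\tilde x^s)-\psi^*] \;\le\; A_s\,[\psi(x^0)-\psi^*] + B_s\,L\,V(x^0,x^*),
\]
where $A_s$ and $B_s$ decay like $\calO(1/(s-s_0)^2)$ once $s>s_0$ (this is exactly how $D_0$ enters the $\calO(\sqrt{mD_0/\epsilon})$ rate). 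Substituting $V(x^0,x^*) \le (1/\bar\mu)[\psi(x^0)-\psi^*]$ collapses the right-hand side to $C_s\cdot[\psi(x^0)-\psi^*]$ with $C_s = \calO\!\bigl((L/\bar\mu)/((s-s_0)^2 T_{s_0})\bigr)$ (using that the chosen $T_s$ stabilizes at $8T_1$ so the inner-loop weights accumulate linearly in $s$).

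Next I would verify that the specific schedule $T_s=T_1 2^{s-1}$ for $s\le 4$ and $T_s=8T_1$ for $s>4$, together with $T_1=\min\{m,L/\bar\mu\}$, produces $C_s \le 5/16$ precisely at $s=4+4\sqrt{L/(\bar\mu m)}$. This is the delicate bookkeeping step: one has to check that the geometric growth of $T_s$ in the first four epochs brings the running constant down to a controlled value, and that the subsequent $\sqrt{L/(\bar\mu m)}$ epochs with $T_s\equiv 8T_1$ provide just enough additional decay. This constant-reduction estimate \eqref{results:acc_SVRG_error} is the main technical obstacle, since the error-bound parameter $\bar\mu$ never appears directly in the algorithm—it only enters through \eqref{cond:eb} after the smooth-case bound is applied.

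Once \eqref{results:acc_SVRG_error} is established, the restart argument is routine: restart Varag with $x^0 \leftarrow \tilde x^s$ every $s$ epochs, so that each restart multiplies the expected primal gap by $5/16$. Hence $k = \log\bigl((\psi(x^0)-\psi^*)/\epsilon\bigr)$ restarts suffice (up to the logarithmic base change) to reach $\bbe[\psi-\psi^*]\le\epsilon$. Finally, I would tally the gradient evaluations: one epoch costs $m$ (full gradient at Line~\ref{eqn:fullgradient}) plus $T_s$ (inner iterations), so one restart costs
\[
\tsum_{s=1}^{4+4\sqrt{L/(\bar\mu m)}}(m+T_s) \;=\; \calO\!\bigl(m+\sqrt{mL/\bar\mu}\bigr),
\]
using $T_1\le m$ on the $m$-summation and $T_1\sqrt{L/(\bar\mu m)}\le\sqrt{mL/\bar\mu}$ on the $T_s$-summation (valid regardless of which argument attains the minimum in $T_1=\min\{m,L/\bar\mu\}$). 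Multiplying by $k$ yields \eqref{results:acc_SVRG_error_sfo}. The only subtle point in this final accounting is confirming that both branches of $T_1$ lead to the same $\sqrt{mL/\bar\mu}$ term; the rest is direct summation.
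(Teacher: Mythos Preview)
Your proposal is correct and follows essentially the same approach as the paper: invoke the smooth-case epoch recursion \eqref{VRASGD_smooth} to obtain ${\cal L}_s\,\bbe[\psi(\tilde x^s)-\psi^*] \le {\cal R}_1[\psi(x^0)-\psi^*] + V(x^0,x^*)$, replace $V(x^0,x^*)$ via \eqref{cond:eb}, then compute ${\cal R}_1=\tfrac{2T_1}{3L}$ and ${\cal L}_s\ge\tfrac{(s-s_0+4)^2(T_{s_0}+1)}{24L}$ with $s_0=4$ to verify the $5/16$ contraction at the stated $s$, followed by the restart and gradient-count bookkeeping exactly as you describe. The only refinement you should make explicit is that $C_s$ carries both the $T_1$ and $L/\bar\mu$ contributions (the paper writes $\tfrac{16T_1+24L/\bar\mu}{(s-s_0+4)^2 T_1 2^{s_0-1}}$), and that $T_1=\min\{m,L/\bar\mu\}$ is precisely what makes the ratio $\tfrac{L/(\bar\mu T_1)}{1+L/(\bar\mu m)}\le 1$ in the final step of \eqref{results:acc_SVRG_error}.
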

\begin{remark}
Note that \algone can also be extended to obtain an unified result as shown in Theorem~\ref{Them:main-deter-sc} for solving finite-sum problems under error bound condition. 
In particular, if the condition number is very large, i.e., $s= {\cal O} \{L/(\bar \mu m)\} \approx \infty$, \algone will never be restarted, and the resulting complexity bounds will reduce to 
the case for solving smooth convex problems provided in Theorem~\ref{Them:main-deter-smooth}.
\end{remark}

\noindent{\bf Stochastic finite-sum optimization.} 
We now consider stochastic smooth convex finite-sum optimization and online learning problems \todo{defined as in \eqref{sp}},
where only noisy gradient information of $f_i$ can be accessed via a SFO oracle.
In particular, for any $x\in X$, the SFO oracle outputs a vector $G_i(x,\xi_j)$ such that
\begin{align}
   &\E_{\xi_j}[G_i(x,\xi_j)]=\nabla f_i(x),~ i=1,\ldots,m, \label{eq:unbiased}\\
   &\E_{\xi_j}[\dnorms{G_i(x,\xi_j)-\nabla f_i(x)}] \leq \sigma^2,~ i=1,\ldots,m. \label{eq:bounded}
\end{align}
We present the variant of \algone for stochastic finite-sum optimization in Algorithm~\ref{algVRASGD_sto} as well as its convergence results in Theorem~\ref{Them:main-sto-smooth}, whose proof can be found in Appendix~\ref{sec:alg-stochastic}.
\begin{algorithm}[h]
  \caption{Stochastic variance-reduced accelerated gradient (Stochastic \newalgone)}
  \label{algVRASGD_sto}
  \begin{algorithmic}
\STATE
  \noindent This algorithm is the same as Algorithm~\ref{algVRASGD} except that for given batch-size parameters $B_s$ and $b_s$, Line~\ref{eqn:fullgradient} is replaced by $\tilde{x}=\tilde{x}^{s-1}$ and
  \beq\label{eqn:fullgradient_sto}
  \tilde{g} =\tfrac{1}{m}\tsum_{i=1}^{m}\left\{G_i(\tilde{x}):=\tfrac{1}{B_s}\tsum_{j=1}^{B_s}G_i(\tilde{x},\xi_{j}^s)\right\},
  \eeq
    \STATE and Line~\ref{eqn:estgradient} is replaced by
  \beq\label{eqn:estgradient_sto}
  G_t = \tfrac{1}{q_{i_t} m b_s}\tsum_{k=1}^{b_s}\big(G_{i_t}(\underline x_{t},\xi_{k}^s)
            - G_{i_t}(\tilde{x})\big)
            + \tilde{g}.
  \eeq
  \end{algorithmic}
\end{algorithm}
\begin{theorem}[\bf Stochastic smooth finite-sum optimization]\label{Them:main-sto-smooth}
Assume that $\theta_t$ are defined as in \eqref{def_theta_acc_SVRG},
$C:=\tsum_{i=1}^m{\tfrac{1}{q_im^2}}$ and the probabilities $q_i$'s are set to $L_i / \tsum_{i=1}^m L_i$ for $i = 1, \ldots, m$.
Moreover,
let us denote $s_0 := \lfloor \log m\rfloor+1$ and set $T_s$, $\alpha_s$, $\gamma_s$ and $p_s$ as in \eqref{parameter-deter-smooth1} and \eqref{parameter-deter-alpha-sm}.
Then the number of calls to the SFO oracle required by 
Algorithm \ref{algVRASGD_sto} to find a stochastic $\epsilon$-solution of \eqref{cp} can be bounded by
\beq\label{complexity-sto-sfo}
N_\mathrm{SFO} = \tsum_s (mB_s+T_sb_s) =
\begin{cases}
{\cal O}\left\{  \tfrac{mC\sigma^2}{L\epsilon} \right\}, & m \ge D_0/\epsilon,\\
 {\cal O} \left\{\tfrac{C\sigma^2D_0}{L\epsilon^2} \right\}, &  m < D_0/\epsilon,
\end{cases}
\eeq
where $D_0$ is given in \eqref{def_D_0}.
\end{theorem}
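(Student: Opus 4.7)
The strategy is to replicate the deterministic analysis behind Theorem~\ref{Them:main-deter-smooth} while tracking the two extra sources of SFO noise introduced by Algorithm~\ref{algVRASGD_sto}: the outer-loop batch estimate $\tilde g$ that uses $mB_s$ samples, and the inner-loop batch of size $b_s$ used in $G_t$. Because the schedule $\{T_s\}, \{\gamma_s\}, \{\alpha_s\}, \{p_s\}$ and the weights $\theta_t$ are identical to the deterministic smooth case, the bias-free part of the recursion carries over verbatim, and only the variance term in each per-iteration inequality needs to be re-examined. The role of $B_s$ and $b_s$ is then to shrink that extra variance to the level of the deterministic convergence rate at each epoch.

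First I would reproduce the per-iteration inequality from the proof of Theorem~\ref{Them:main-deter-smooth}: one inner step yields a bound that combines a contraction on $\psi(\bar x_{t-1})-\psi(x^*)$ and $V(x_{t-1},x^*)$ with a residual of the form $\gamma_s^2\,\bbe[\dnorms{G_t-\nabla f(\underline x_t)}]$. In the deterministic setting this residual is absorbed by the standard variance-reduction identity bounding $\bbe[\dnorms{G_t-\nabla f(\underline x_t)}]$ by a multiple of $f(\tilde x)-f(\underline x_t)-\langle\nabla f(\underline x_t),\tilde x-\underline x_t\rangle$, which telescopes through the momentum term $p_s\tilde x$. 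I would keep this skeleton unchanged and only redo the variance bound.

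The next step is to bound the stochastic $G_t$. Conditioning on the state at the start of epoch $s$, I would split $G_t-\nabla f(\underline x_t)$ into three conditionally uncorrelated pieces: (i) the classical variance-reduction noise from the random choice of $i_t$, which satisfies the same bound as in the deterministic proof; (ii) the fresh SFO noise from the inner mini-batch of size $b_s$, which contributes at most $C\sigma^2/b_s$ thanks to \eqref{eq:bounded} and the definition of $C$; and (iii) the outer-loop noise $\tilde g-\nabla f(\tilde x)$, whose variance is at most $\sigma^2/(mB_s)$ by independence of the $mB_s$ samples and \eqref{eq:bounded}. The net variance bound is therefore the deterministic one plus $C\sigma^2/b_s+\sigma^2/(mB_s)$, and this is the only point where the SFO assumptions enter.

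Summing the updated recursion over $t=1,\dots,T_s$ with the weights in \eqref{def_theta_acc_SVRG} and telescoping across epochs (via the doubling schedule for $s\le s_0$ and the constant-length, decreasing-$\alpha_s$ schedule for $s>s_0$ exactly as in the deterministic proof) gives an end-of-epoch bound of the form
\beq
\bbe[\psi(\tilde x^s)-\psi^*]\le R_s^{\mathrm{det}}+\mathcal{O}\!\left(\tsum_{j\le s} w_{j,s}\,T_j\gamma_j^2\Big(\tfrac{C\sigma^2}{b_j}+\tfrac{\sigma^2}{mB_j}\Big)\right),
\eeq
where $R_s^{\mathrm{det}}$ is the rate of Theorem~\ref{Them:main-deter-smooth} and $w_{j,s}$ encodes the contraction between epochs $j$ and $s$. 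I would then pick $B_s$ and $b_s$ so that each epoch's noise contribution is at most a constant fraction of $R_s^{\mathrm{det}}$, plug into the total SFO count $\tsum_s(mB_s+T_s b_s)$, and read off the two cases of \eqref{complexity-sto-sfo} from the two regimes already identified in Theorem~\ref{Them:main-deter-smooth}.

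The main obstacle will be the balancing step. The outer-loop noise $\sigma^2/(mB_s)$ contaminates every one of the $T_s$ inner iterations of epoch $s$, so its effective cost scales as $T_s\gamma_s^2\sigma^2/(mB_s)$, and similarly for the inner-loop noise with $C\sigma^2/b_s$. Matching these against the deterministic rate $R_s^{\mathrm{det}}$, while respecting the transition between the linear-convergence phase ($s\le s_0$, $\alpha_s=1/2$) and the sublinear phase ($s>s_0$, $\alpha_s=\Theta(1/(s-s_0))$), is what governs the final dependence on $\epsilon$; getting the exponents of $\epsilon$, $m$ and $L$ right — and checking that the $m \ge D_0/\epsilon$ branch is served entirely by the linear phase while the $m < D_0/\epsilon$ branch is dominated by the sublinear phase — is the delicate part of the argument.
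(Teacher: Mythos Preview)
Your overall strategy matches the paper's: rederive the per-iteration inequality with an augmented variance bound (Lemma~\ref{variance_reduced_betterEsti_sto} and Lemma~\ref{Lemma_VARSGD_iter_sto} in the appendix), sum across the epoch, telescope as in the smooth deterministic case, and then choose $B_s,b_s$ so the accumulated noise matches the deterministic rate. So the architecture is right.

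There is, however, a concrete gap in your variance decomposition. Your three ``uncorrelated pieces'' do not add up to $G_t-\nabla f(\underline x_t)$. In Algorithm~\ref{algVRASGD_sto} the inner-loop estimator subtracts $\tfrac{1}{q_{i_t}m}G_{i_t}(\tilde x)$, i.e.\ the \emph{batch} estimate from the outer loop, not the exact $\tfrac{1}{q_{i_t}m}\nabla f_{i_t}(\tilde x)$. Hence there is a fourth piece,
\[
\tfrac{1}{q_{i_t}m}\bigl(\nabla f_{i_t}(\tilde x)-G_{i_t}(\tilde x)\bigr),
\]
whose second moment is $\tsum_i \tfrac{1}{q_i m^2}\cdot\tfrac{\sigma^2}{B_s}=\tfrac{C\sigma^2}{B_s}$, not $\sigma^2/(mB_s)$. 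This is exactly the extra $\tsum_i \tfrac{2\sigma^2}{q_i m^2 B_s}$ term appearing in the paper's Lemma~\ref{variance_reduced_betterEsti_sto}. Your stated bound on the outer-loop contribution is therefore too small by a factor of order $mC$. In the paper's proof this does not affect the final rate because they take $B_s\equiv b_s$, so the $C\sigma^2/B_s$ and $C\sigma^2/b_s$ terms are of the same order; but if you proceed with your claimed bound and try to optimize $B_s$ and $b_s$ separately, you will underprovision $B_s$ and the argument will fail. Fix the decomposition (group the fourth piece with your (iii), as the paper does, so that the combined term has mean zero over $i_t$), and the rest of your plan goes through essentially as in Appendix~\ref{sec:alg-stochastic}.
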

\begin{remark}
Note that the constant $C$ in \eqref{complexity-sto-sfo} can be easily upper bounded by $\tfrac{L}{\min\{L_i\}}$, and $C=1$ if $L_i = L, \forall i$. 
To the best of our knowledge, among a few existing RIG methods that can be applied to solve the class of stochastic finite-sum problems, \algone is the first to achieve such complexity results as in \eqref{complexity-sto-sfo} for smooth convex problems. RGEM\cite{lan2018random} obtains nearly-optimal rate of convergence for strongly convex case, but cannot solve stochastic smooth problems directly, and \cite{kulunchakov2019estimate} required a specific initial point, i.e., an exact solution to a proximal mapping depending on the variance $\sigma^2$, to achieve ${\cal O}\left\{m\log m + \sigma^2/\epsilon^2\right\}$ rate of convergence for smooth convex problems.
\end{remark}
\setcounter{equation}{0}

\section{Numerical experiments}\label{sec:numerical}
In this section, we demonstrate the advantages of our proposed algorithm, \algone over several state-of-the-art algorithms, e.g., SVRG++ \cite{allen2016improved} and Katyusha \cite{allen2016katyusha}, etc., via solving several well-known machine learning models. For all experiments, we use public real datasets downloaded from UCI Machine Learning Repository \cite{Dua:2019} \todo{and uniform sampling strategy to select $f_i$. Indeed, the theoretical suggesting sampling distribution should be non-uniform, i.e., $q_i  = L_i/\tsum_{i=1}^m L_i$, which results in the optimal constant $L$ appearing in the convergence results. However, a uniform sampling strategy will only lead to a constant factor slightly larger than $L=\tfrac{1}{m}\tsum_{i=1}^m L_i$. Moreover, it is computationally efficient to estimate $L_i$ by performing maximum singular value decomposition of the Hessian since only a rough estimation suffices.} 

\noindent{\bf Unconstrained smooth convex problems.} 
We first investigate unconstrained logistic models which cannot be solved via the perturbation approach due to the unboundedness of the feasible set. More specifically, we applied \newalgone, SVRG++ and Katyusha\textsuperscript{ns} to solve a logistic regression problem,
\beq\label{p-logistic}
\min_{x\in \bbr^n} \{\psi(x):= \tfrac{1}{m}\tsum_{i=1}^m f_i(x)\} \ \mbox{where }  f_i(x): = \log(1+\exp(-b_ia_i^Tx))\}.
\eeq
Here $(a_i,b_i)\in \bbr^{n}\times\{-1,1\}$ is a training data point and $m$ is the sample size, and hence $f_i$ now corresponds to the loss generated by a single training data. 
As we can see from Figure~\ref{logistic-results}, \algone converges much faster than SVRG++ and Katyusha in terms of training loss. 
\begin{figure}[H]
\begin{minipage}{0.45\textwidth}
\centering
\includegraphics[scale = 0.15]{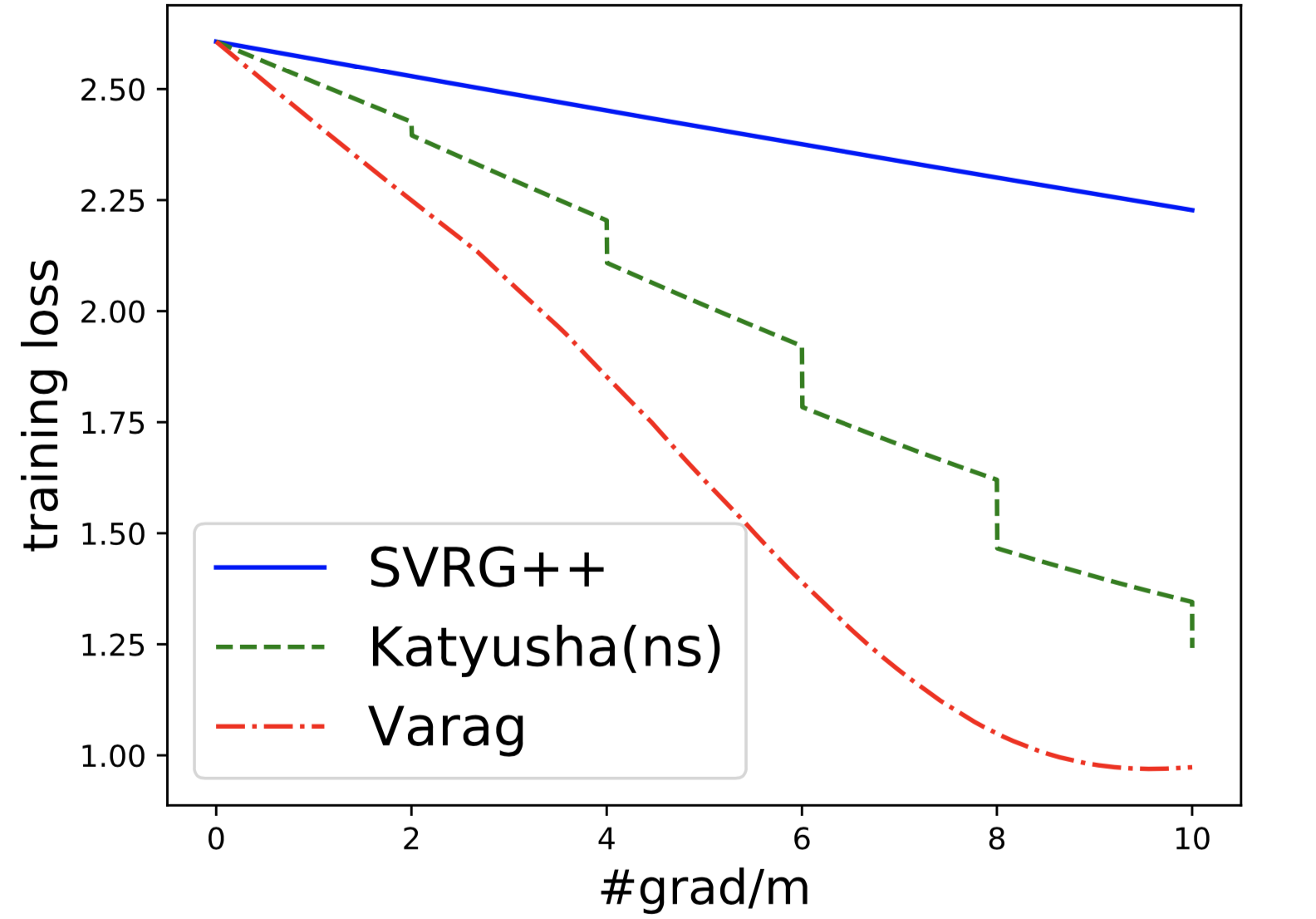}
\\
{\small Diabetes ($m=1151$), \\ unconstrained logistic}
\end{minipage} 
\begin{minipage}{0.45\textwidth}
\centering
\includegraphics[scale = 0.15]{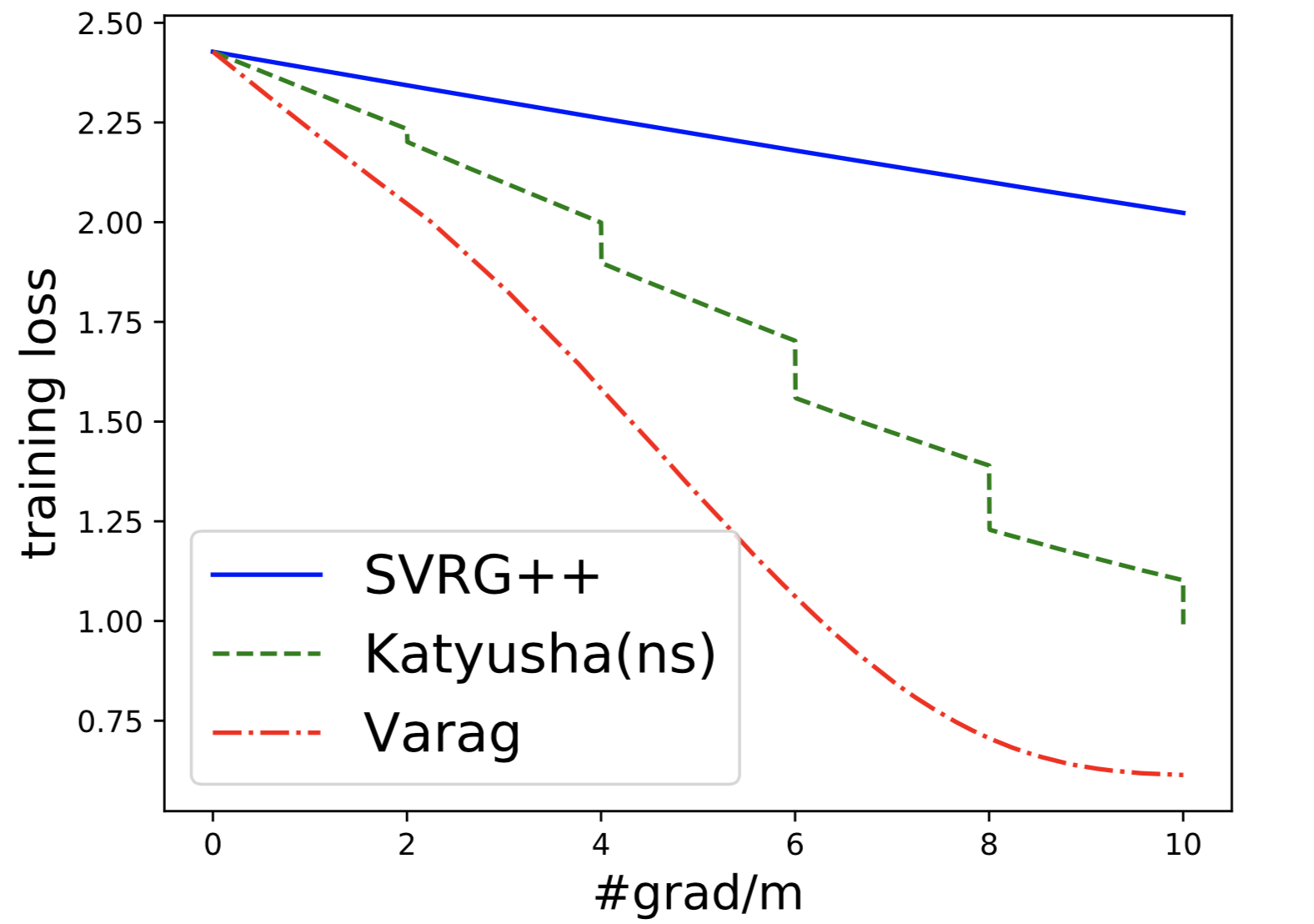}
\\
{\small  Breast Cancer Wisconsin ($m=683$), unconstrained logistic}
\end{minipage}
\caption{\footnotesize The algorithmic parameters for SVRG++ and Katyusha\textsuperscript{ns} are set according to \cite{allen2016improved} and \cite{allen2016katyusha}, respectively, and those for \algone are set as in Theorem~\ref{Them:main-deter-smooth}. }
\label{logistic-results}
\end{figure}
\vspace*{-.2in}
\noindent{\bf Strongly convex loss with simple convex regularizer.}
We now study the class of Lasso regression problems with $\lambda$ as the regularizer coefficient, given in the following form
\beq\label{p-lasso}
\min_{x\in \bbr^n}\{\psi(x):= \tfrac{1}{m}\tsum_{i=1}^{m}f_i(x) + h(x)\} \ \mbox{where } f_i(x):=\tfrac{1}{2}(a_i^Tx-b_i)^2, h(x):=\lambda \|x\|_1.
\eeq
Due to the assumption SVRG++ and Katyusha enforced on the objective function that the strong convexity can only be associated with the regularizer, these methods always view Lasso as smooth problems \cite{tang2018rest},  while \algone can treat Lasso as strongly convex problems.
As can be seen from Figure~\ref{lasso-results}, \algone outperforms SVRG++ and Katyusha\textsuperscript{ns} in terms of training loss. 
\begin{figure}[H]
\begin{minipage}{0.45\textwidth}
\centering
\includegraphics[scale = 0.15]{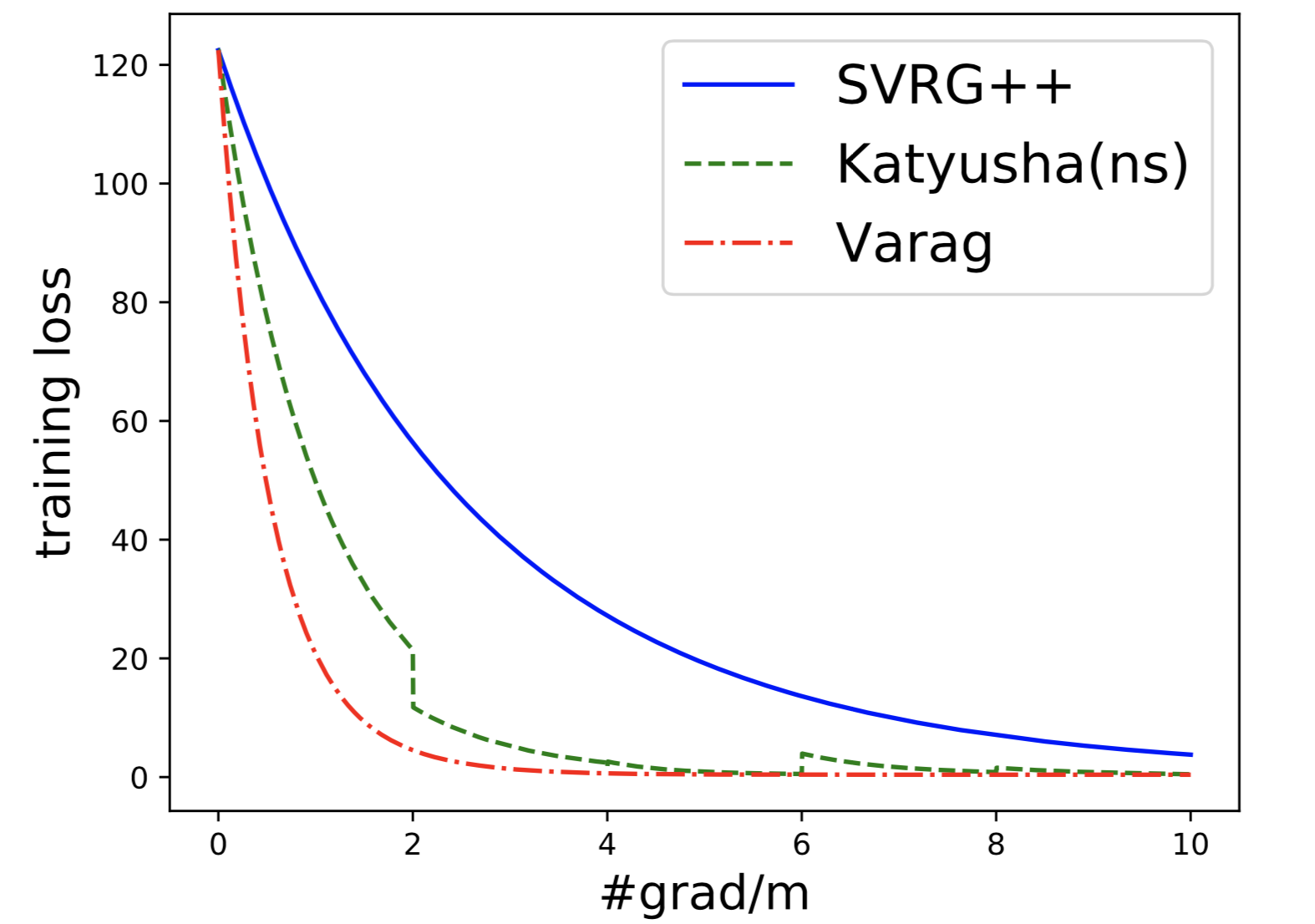}
\\
{\small  Diabetes ($m=1151$), \\ Lasso $\lambda=0.001$}
\end{minipage} 
\begin{minipage}{0.45\textwidth}
\centering
\includegraphics[scale = 0.15]{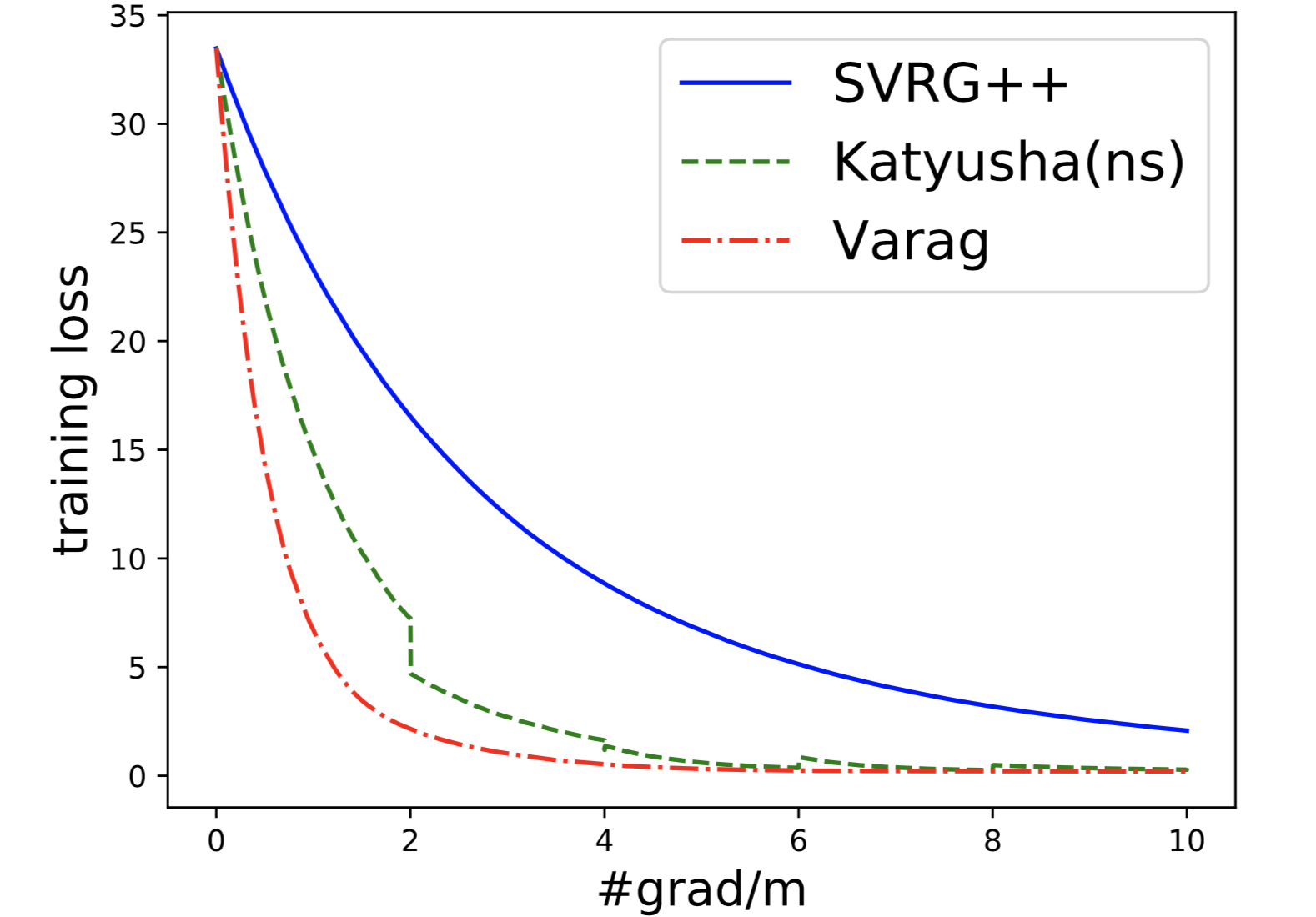}
\\
{\small  Breast Cancer Wisconsin ($m=683$), \\ Lasso $\lambda=0.001$}
\end{minipage}
\caption{\footnotesize The algorithmic parameters for SVRG++ and Katyusha\textsuperscript{ns} are set according to \cite{allen2016improved} and \cite{allen2016katyusha}, respectively, and those for \algone are set as in Theorem~\ref{Them:main-deter-sc}. }
\label{lasso-results}
\end{figure}
\vspace*{-.2in}
\noindent{\bf Weakly strongly convex problems satisfying error bound condition.}
Let us consider a special class of finite-sum convex quadratic problems given in the following form
\begin{align}\label{p-QP}
\min_{x\in \bbr^n}\{\psi(x):=\tfrac{1}{m}\tsum_{i=1}^m f_i(x)\} \ \mbox{where }  f_i(x): = \tfrac{1}{2}x^TQ_ix+q_i^Tx.
\end{align}
Here $q_i=-Q_ix_s$ and $x_s$ is a solution to the symmetric linear system $Q_ix+q_i=0$ with $Q_i\succeq 0$.
\cite{CongDDang:452}[Section 6] and \cite{necoara2018linear}[Section 6.1] proved that \eqref{p-QP} belongs to the class of weakly strongly convex problems satisfying error bound condition \eqref{cond:eb}. For a given solution $x_s$, we use the following real datasets to generate $Q_i$ and $q_i$. We then compare the performance of \algone with fast gradient method (FGM) proposed in \cite{necoara2018linear}.  
As shown in Figure~\ref{errorbound-results}, \algone outperforms FGM for all cases. And as the number of component functions $m$ increases, \algone demonstrates more advantages over FGM. 
These numerical results are consistent with the theoretical complexity bound \eqref{results:acc_SVRG_error_sfo} suggesting that \algone can save up to ${\cal O}\{\sqrt m\}$ number of gradient computations than deterministic algorithms, e.g., FGM.
\begin{figure}[H]
\begin{minipage}{0.45\textwidth}
\centering
\includegraphics[scale = 0.15]{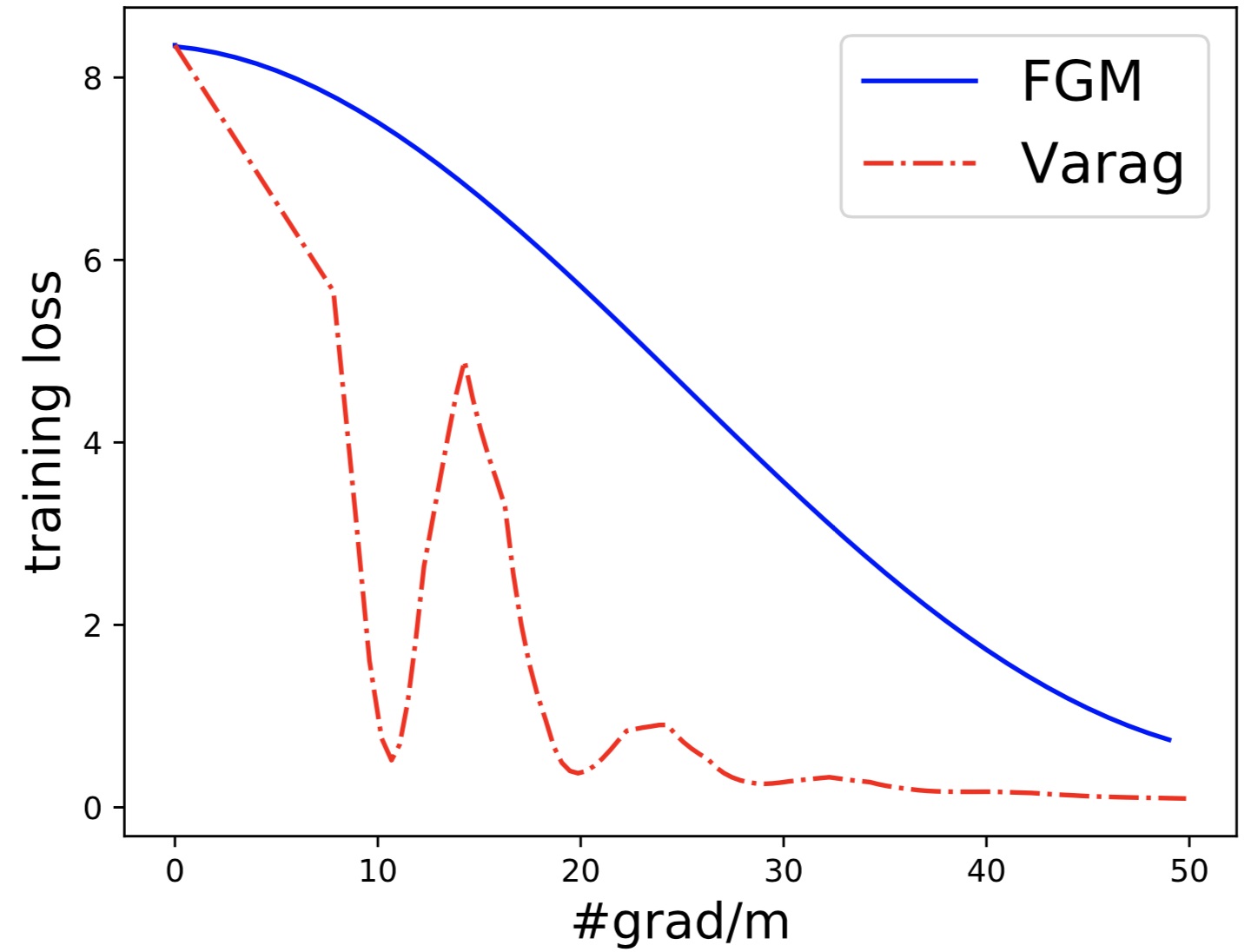}
\\
{\small Diabetes ($m=1151$)}
\end{minipage}
\begin{minipage}{0.45\textwidth}
\centering
\includegraphics[scale = 0.15]{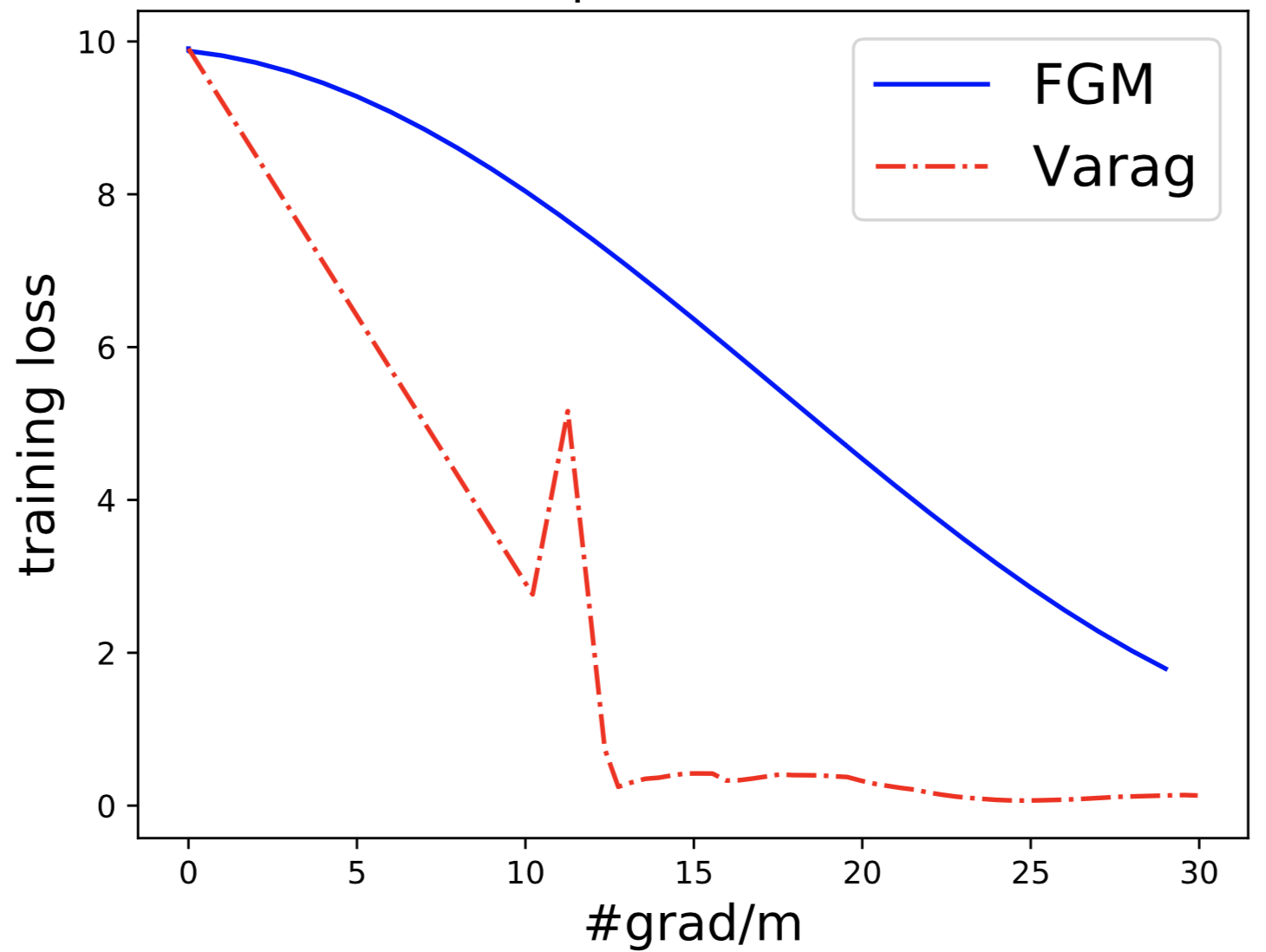}
\\
{\small Parkinsons Telemonitoring ($m=5875$)}
\end{minipage}
\caption{\footnotesize The algorithmic parameters for FGM and \algone are set according to \cite{necoara2018linear} and Theorem~\ref{Them:main-deter-error}, respectively. }
\label{errorbound-results}
\end{figure}
\vspace*{-.2in}
More numerical experiment results on another problem case, strongly convex problems with small strongly convex modulus, can be found in Appendix~\ref{sec:experiments}.
\setcounter{equation}{0}

\newpage
\bibliographystyle{plain}
\bibliography{ref-fastVR}

\newpage
\appendix
\section{Convergence analysis of \algone for deterministic finite-sum optimization}\label{sec:analysis_deter}
Our main goal in this section is to establish the convergence results stated in Theorems \ref{Them:main-deter-smooth} and~\ref{Them:main-deter-sc}
for the \algone method applied to the finite-sum optimization problem in \eqref{cp}.

Before proving Theorem \ref{Them:main-deter-smooth} and~\ref{Them:main-deter-sc}, we first need to present some basic properties for smooth convex functions and then provide some important technical results.

\vgap
\begin{lemma} \label{smoothfunction_prop}
	If $f: X \to \bbr$ has Lipschitz continuous gradients with Lipschitz constant $L$, then
	\[
	\tfrac{1}{2L} \|\nabla f(x) - \nabla f(z)\|_*^2 \leq
	f(x) - f(z) - \langle \nabla f(z), x-z \rangle \ \ \forall x, z \in X.
	\]
\end{lemma}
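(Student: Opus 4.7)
The plan is to apply the standard auxiliary-function trick that turns the descent lemma (an upper bound) into a co-coercivity-type lower bound. First I would fix $z \in X$ and define $\phi_z(x) := f(x) - f(z) - \langle \nabla f(z), x-z\rangle$. Since $f$ is convex and $L$-smooth, $\phi_z$ is nonnegative, convex, has $\nabla \phi_z(x) = \nabla f(x) - \nabla f(z)$ which is also $L$-Lipschitz, and attains its global minimum at $x=z$ where $\phi_z(z)=0$ and $\nabla\phi_z(z)=0$.

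Next I would invoke the descent lemma for $\phi_z$, namely
\[
\phi_z(y) \le \phi_z(x) + \langle \nabla \phi_z(x), y-x\rangle + \tfrac{L}{2}\|y-x\|^2,
\]
which is a standard consequence of $L$-Lipschitz gradients obtained by integrating $\nabla\phi_z$ along the segment from $x$ to $y$ (the segment stays in $X$ by convexity of $X$). Now I would pick $y$ to minimize the right-hand side: choosing $y = x - \tfrac{1}{L} s$ with $s$ the primal representative of $\nabla\phi_z(x)$ in the dual norm, the quadratic upper bound becomes $\phi_z(x) - \tfrac{1}{2L}\|\nabla\phi_z(x)\|_*^2$.

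Finally, since $z$ is the global minimizer of $\phi_z$, we have $0 = \phi_z(z) \le \phi_z(y)$ for this particular $y$; combining with the inequality above yields
\[
0 \le \phi_z(x) - \tfrac{1}{2L}\|\nabla\phi_z(x)\|_*^2,
\]
which, after substituting the definitions of $\phi_z$ and $\nabla\phi_z$, is exactly the claim.

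The main obstacle is really a technical one: the minimizer $y^\ast = x - \tfrac{1}{L}s$ of the quadratic upper bound may fall outside $X$, in which case the descent lemma's applicability to $y^\ast$ is not immediate. The standard way around this, and the route I would take, is to observe that the smoothness inequality \eqref{def_smoothness} extends $\nabla f$ continuously to a neighborhood containing the segment $[x, y^\ast]$ (or to regard \eqref{def_smoothness} as holding on the whole space with $X$ merely the feasible set); alternatively, one may replace the unconstrained minimization over $y$ by the observation $\phi_z(z)=0$ and directly use the inequality $\phi_z(x) \ge \tfrac{1}{2L}\|\nabla\phi_z(x)\|_*^2$ as the definition of co-coercivity that is standard in this literature and cited without reproof.
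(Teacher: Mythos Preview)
Your approach is essentially the same as the paper's: both define the auxiliary function $\phi(x)=f(x)-f(z)-\langle\nabla f(z),x-z\rangle$, note that it is minimized at $z$ with value $0$, and then apply the descent inequality at the point $x-\tfrac{1}{L}\nabla\phi(x)$ to obtain $0=\phi(z)\le\phi(x)-\tfrac{1}{2L}\|\nabla\phi(x)\|_*^2$ (the paper derives the descent step via an explicit line integral rather than quoting the lemma, but the substance is identical). The domain issue you flag---that $x-\tfrac{1}{L}\nabla\phi(x)$ might leave $X$---is real and is simply glossed over in the paper's proof as well.
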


\begin{proof}
	Denote
	$\phi(x) = f(x) - f(z) - \langle \nabla f(z), x-z \rangle$. Clearly $\phi$ also has $L$-Lipschitz continuous gradients.
	It is easy to check that $\nabla \phi(z)=0$, and hence that $\min_x \phi(x) =\phi(z)= 0$, which implies
	\begin{align}
	\phi(z)& \leq \phi(x-\tfrac{1}{L}\nabla\phi(x)) \notag\\
	&=\phi(x)+\int_0^1\langle \nabla\phi\left(x-\tfrac{\tau}{L}\nabla \phi(x)\right),-\tfrac{1}{L}\nabla \phi(x) \rangle d\tau \notag\\
	&=\phi(x)+\langle \nabla \phi(x),-\tfrac{1}{L}\nabla \phi(x)\rangle+\int_0^1\langle \nabla \phi\left(x-\tfrac{\tau}{L}\nabla \phi(x)\right)-\nabla \phi(x),-\tfrac{1}{L} \nabla \phi(x) \rangle d\tau \notag\\
	& \leq \phi(x)-\tfrac{1}{L}\|\nabla \phi(x)\|_*^2+\int_0^1 L\dnorm{\tfrac{\tau}{L}\nabla \phi(x)}~\dnorm{\tfrac{1}{L} \nabla \phi(x)} d\tau \notag\\
	&= \phi(x)-\tfrac{1}{2L}\dnorm{\nabla \phi(x)}^2. \notag
	\end{align}
	Therefore, we have
	$
	\tfrac{1}{2L} \dnorm{\nabla \phi(x)}^2 \leq \phi(x)  - \phi(z) = \phi(x),
	$
	and the result follows immediately from this relation.
\end{proof}

The following result follows as a consequence of Lemma~\ref{smoothfunction_prop}.

\begin{lemma}\label{bound}
	Let $x^*$ be an optimal solution of~\eqref{cp}. Then we have
	\begin{align}
	\tfrac{1}{m}\tsum_{i=1}^m \tfrac{1}{m q_i} \dnorm{\nabla f_i(x)-\nabla f_i(x^*)}^2
	\leq 2 L_Q\left[ \psi(x) - \psi(x^*)\right], \ \forall x \in X,
	\end{align}
	where
	\beq \label{def_LQ}
	L_Q= \tfrac{1}{m} \max_{i=1, \ldots,m} \tfrac{L_i}{q_i}.
	\eeq
\end{lemma}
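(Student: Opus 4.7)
The plan is to apply Lemma~\ref{smoothfunction_prop} to each component function $f_i$ separately and then aggregate using the weights $1/(m^2 q_i)$, exploiting the definition of $L_Q$ to pull out a uniform factor.

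First, I would invoke Lemma~\ref{smoothfunction_prop} with $f$ replaced by $f_i$ (Lipschitz constant $L_i$) and with $z = x^*$, which yields
\[
\tfrac{1}{2L_i}\dnorm{\nabla f_i(x) - \nabla f_i(x^*)}^2 \le f_i(x) - f_i(x^*) - \langle \nabla f_i(x^*), x - x^*\rangle
\]
for every $i$ and every $x \in X$. Rearranging and summing with the weights $\tfrac{1}{m^2 q_i}$ gives
\[
\tfrac{1}{m^2}\tsum_{i=1}^m \tfrac{1}{q_i}\dnorm{\nabla f_i(x) - \nabla f_i(x^*)}^2 \le \tfrac{2}{m^2}\tsum_{i=1}^m \tfrac{L_i}{q_i}\bigl[f_i(x) - f_i(x^*) - \langle \nabla f_i(x^*), x - x^*\rangle\bigr].
\]

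Second, I would use that each bracketed term on the right is nonnegative (by convexity of $f_i$), so I can upper bound the coefficient $\tfrac{L_i}{q_i}$ uniformly by $\max_j \tfrac{L_j}{q_j} = m L_Q$ and pull it outside the sum. The remaining $\tfrac{1}{m}\tsum_i$ collapses via linearity into $f(x) - f(x^*) - \langle \nabla f(x^*), x - x^*\rangle$, giving the bound $2 L_Q \bigl[f(x) - f(x^*) - \langle \nabla f(x^*), x - x^*\rangle\bigr]$.

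Finally, I would convert the $f$-gap into the $\psi$-gap using the optimality of $x^*$ for $\psi$ on $X$. By the first-order optimality condition, there exists $h'(x^*) \in \partial h(x^*)$ with $\langle \nabla f(x^*) + h'(x^*), x - x^*\rangle \ge 0$ for all $x \in X$, hence $-\langle \nabla f(x^*), x - x^*\rangle \le \langle h'(x^*), x - x^*\rangle \le h(x) - h(x^*)$ by convexity of $h$. Adding $f(x) - f(x^*)$ to both sides and using $\psi = f + h$ yields $f(x) - f(x^*) - \langle \nabla f(x^*), x - x^*\rangle \le \psi(x) - \psi(x^*)$, which combined with the previous display gives the claim. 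The only subtle step is the last one, since we need the optimality condition for a constrained, possibly nonsmooth problem; the rest is a bookkeeping argument about the weighted sum.
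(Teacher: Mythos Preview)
Your proposal is correct and follows essentially the same route as the paper: apply Lemma~\ref{smoothfunction_prop} to each $f_i$, weight by $1/(m^2 q_i)$, bound $L_i/q_i$ uniformly by $m L_Q$ using nonnegativity of the Bregman residual, and then invoke optimality of $x^*$ together with convexity of $h$ to pass from the $f$-gap to the $\psi$-gap. If anything, you are slightly more explicit than the paper in justifying why the bracketed terms are nonnegative before pulling $L_Q$ outside the sum.
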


\begin{proof}
	By Lemma~\ref{smoothfunction_prop} (with $f=f_i$), we have
	$$
	\dnorm{\nabla f_i(x) - \nabla f_i(x^*)}^2 \leq
	2L_i \left[f_i(x) - f_i(x^*) -\langle \nabla f_i(x^*), x-x^*\rangle\right].
	$$
	Dividing this inequality by $1/(m^2 q_i)$,
	and summing over $i=1,\ldots,m$, we obtain
	\begin{align}
	\tfrac{1}{m}\tsum_{i=1}^m\tfrac{1}{m q_i}\dnorm{\nabla f_i(x) - \nabla f_i(x^*)}^2
	\leq 2L_Q \left[ f(x) - f(x^*) - \langle \nabla f(x^*), x-x^*\rangle\right]. \label{SVRG_bnd0}
	\end{align}
	By the optimality of $x^*$, we have $\langle \nabla f(x^*) + h'(x^*), x - x^*\rangle \ge 0$ for any $x \in X$,
	which in view of the convexity of $h$, implies that
	$\langle \nabla f(x^*), x - x^*\rangle \ge h(x^*) - h(x)$ for any $x \in X$.
	The result then follows by combining the previous two conclusions.
\end{proof}

\vgap

In the sequel, let us define some important notations that help us to simplify the convergence analysis of \algone.
\begin{align}
l_f(z, x) &:= f(z) + \langle \nabla f(z), x - z\rangle, \label{def_linear_f}\\
\delta_t &:= G_t - \nabla f (\underline x_t), \label{def_VARSG_delta}\\
x^+_{t-1} &:= \tfrac{1}{1+\mu \gamma_s}\left( x_{t-1} + \mu \gamma_s \underline x_t \right), \label{def_x_t_plus}
\end{align}
where $G_t$, $\underline x_t$ and $x_{t-1}$ are generated as in Algorithm~\ref{algVRASGD}.
Lemma~\ref{variance_reduced_betterEsti} below shows that $G_t$ is an unbiased estimator of $\nabla f(\underline x_t)$ and provides a tight upper bound for its variance.
\begin{lemma}\label{variance_reduced_betterEsti}
	Conditionally on~$x_1, \ldots, x_{t-1}$,
	\begin{align}
	\bbe [\delta_t] &=0, \label{SVRG_bnd1}\\
	\bbe[\dnorm{\delta_t}^2] &\le 2 L_Q[f(\tilde x) - f(\underline x_{t}) - \langle \nabla f(\underline x_{t}), \tilde x - \underline x_{t}\rangle]. \label{SVRG_bnd2}
	\end{align}
\end{lemma}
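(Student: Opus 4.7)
The plan is to unpack the gradient estimator $G_t = (\nabla f_{i_t}(\underline x_t) - \nabla f_{i_t}(\tilde x))/(q_{i_t} m) + \tilde g$ and treat $i_t$ as the only source of randomness in the inner iteration, conditional on $x_1,\ldots,x_{t-1}$ (so that $\underline x_t$, $\tilde x$, $\tilde g$ are all fixed). Writing $u_i := (\nabla f_i(\underline x_t) - \nabla f_i(\tilde x))/(q_i m)$, we have $G_t = u_{i_t} + \tilde g$, and the two claims become statements about the random vector $u_{i_t}$.

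For the unbiasedness \eqref{SVRG_bnd1}, I would just compute
$\bbe[u_{i_t}] = \tsum_{i=1}^m q_i \cdot \tfrac{1}{q_i m}(\nabla f_i(\underline x_t) - \nabla f_i(\tilde x)) = \nabla f(\underline x_t) - \nabla f(\tilde x),$
so $\bbe[G_t] = \nabla f(\underline x_t) - \nabla f(\tilde x) + \tilde g = \nabla f(\underline x_t)$ since $\tilde g = \nabla f(\tilde x)$, giving $\bbe[\delta_t] = 0$.

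For the variance bound \eqref{SVRG_bnd2}, I would first note that $\delta_t = u_{i_t} - \bbe[u_{i_t}]$, and use the elementary inequality $\bbe[\dnorm{X - \bbe X}^2] \le \bbe[\dnorm{X}^2]$ applied to $X = u_{i_t}$ to obtain
\[
\bbe[\dnorm{\delta_t}^2] \;\le\; \bbe[\dnorm{u_{i_t}}^2] \;=\; \tsum_{i=1}^m \tfrac{1}{q_i m^2}\dnorm{\nabla f_i(\underline x_t) - \nabla f_i(\tilde x)}^2.
\]
Then I invoke Lemma~\ref{smoothfunction_prop} componentwise with $f = f_i$, $z = \underline x_t$, $x = \tilde x$ to replace each squared dual-norm term by $2L_i[f_i(\tilde x) - f_i(\underline x_t) - \langle \nabla f_i(\underline x_t), \tilde x - \underline x_t\rangle]$. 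Each of these bracketed quantities is nonnegative (by convexity of $f_i$), so I can pull out the worst coefficient $\tfrac{2L_i}{q_i m} \le 2L_Q \cdot m$ using the definition \eqref{def_LQ}, which after the $1/m^2$ factor yields $\tfrac{2L_Q}{m} \tsum_i [f_i(\tilde x) - f_i(\underline x_t) - \langle \nabla f_i(\underline x_t), \tilde x - \underline x_t\rangle] = 2L_Q[f(\tilde x) - f(\underline x_t) - \langle \nabla f(\underline x_t), \tilde x - \underline x_t\rangle]$, as required.

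No step should present real difficulty; the only place where one must be slightly careful is the variance-vs-second-moment reduction, where it is tempting but unnecessary to center $u_{i_t}$ by hand — the inequality $\bbe[\dnorm{X - \bbe X}^2] \le \bbe[\dnorm{X}^2]$ (which holds for any norm via Jensen applied coordinate-wise to the dual representation, or directly in the Euclidean case) is what lets the SVRG-type control function $f$ only at $\tilde x$ and $\underline x_t$ rather than at all the $\nabla f_i(\tilde x)$ separately, and is precisely what makes the bound sharp enough to drive variance reduction.
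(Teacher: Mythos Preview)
Your proposal is correct and follows essentially the same route as the paper. The paper writes the variance step as the bias--variance identity $\bbe[\dnorm{u_{i_t}-\bbe u_{i_t}}^2]=\bbe[\dnorm{u_{i_t}}^2]-\dnorm{\bbe u_{i_t}}^2$ and then drops the negative term, while you go straight to the inequality $\bbe[\dnorm{X-\bbe X}^2]\le\bbe[\dnorm{X}^2]$; after that, both arguments invoke the componentwise smoothness bound of Lemma~\ref{smoothfunction_prop} (the paper packages this as relation~\eqref{SVRG_bnd0} with $x,x^*$ replaced by $\tilde x,\underline x_t$) and pull out $L_Q=\max_i L_i/(mq_i)$ using nonnegativity of each Bregman term.
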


\begin{proof}  We take the expectation with respect to~$i_t$ conditionally on~$x_1, \ldots, x_{t}$, to obtain
	\begin{align*}
	\bbe \left[\tfrac{1}{m q_{i_t}}\nabla f_{i_t}(\underline x_{t})\right]
	= \tsum_{i=1}^m \tfrac{q_i}{m q_i} \nabla f_i(\underline x_{t})
	= \tsum_{i=1}^m \tfrac{1}{m} \nabla f_i(\underline x_{t})
	=\nabla f(\underline x_{t}).
	\end{align*}
	Similarly we have
	$\bbe \left[\tfrac{1}{m q_{i_t}}\nabla f_{i_t}(\tilde{x})\right]=\nabla f(\tilde{x})$.
	Therefore,
	\begin{align*}
	\bbe [G_t] = \bbe \left[\tfrac{1}{m q_{i_t}}\big(\nabla f_{i_t}(\underline x_{t}) - \nabla f_{i_t}(\tilde{x})\big) + \nabla f(\tilde{x}) \right]
	= \nabla f(\underline x_{t}).
	\end{align*}
	To bound the variance, we have
	\begin{align*}
	\bbe [\dnorm{\delta_t}^2] =& \bbe [\dnorm{\tfrac{1}{m q_{i_t}}\big(\nabla f_{i_t} (\underline x_{t})-\nabla f_{i_t}(\tilde{x})\big) +\nabla f(\tilde{x}) - \nabla f(\underline x_{t}) }^2]  \notag\\
	=& \bbe [\tfrac{1}{(m q_{i_t})^2}\dnorm{\nabla f_{i_t} (\underline x_{t}) - \nabla f_{i_t}(\tilde{x}) }^2]
	- \dnorm{\nabla f(\underline x_{t}) - \nabla f(\tilde{x})}^2 \notag\\
	\leq& \bbe [\tfrac{1}{(m q_{i_t})^2}\dnorm{\nabla f_{i_t} (\underline x_{t}) - \nabla f_{i_t}(\tilde{x}) }^2]
	\end{align*}
	The above relation, in view of relation~\eqref{SVRG_bnd0} (with $x$ and $x^*$ replaced by $\tilde x$ and $\underline x_{t}$), then implies \eqnok{SVRG_bnd2}.
\end{proof}

Using the definition of $x^+_{t-1}$ in \eqref{def_x_t_plus}, and the definitions of $\underline x_t$ and $\bar x_t$ in Algorithm~\ref{algVRASGD} (see Line~\ref{eqn:underlinex} and \ref{eqn:barx}), we have
\begin{align}\label{eqn:barx-xplus}
\bar x_t - \underline x_t & = (1 - \alpha_s-p_s) \bar x_{t-1}  + \alpha_s x_{t} + p_s \tilde x - \underline x_t \nn\\
&=   \alpha_s x_{t} + \tfrac{1}{1+\mu \gamma_s} \left\{[1+\mu \gamma_s(1-\alpha_s)] \underline x_t
- \alpha_s x_{t-1}\right\} - \underline x_t\nn\\
&= \alpha_s (x_t - x^+_{t-1}).
\end{align}

We characterize the solutions of the prox-mapping \eqref{prox_mapping} (or Line~\ref{eqn:xt} of Algorithm~\ref{algVRASGD}) in Lemma~\ref{tech1_prox} below.

\begin{lemma}[{\cite[Lemma 2]{GhaLan12-2a}}] \label{tech1_prox}
	Let the convex function $p: X\to \bbr$, the points $\tilde x, \tilde y \in X$ and the scalars
	$\mu_1,\mu_2\geq 0$ be given.
	Let $w:X \to \bbr$ be a convex function and $V(x^0,x)$ be defined in \eqref{primal_prox}.
	If
	\beq \label{tech_lemma_prob}
	u^* \in \Argmin \{ p(u) + \mu_1 V(\tilde x, u) + \mu_2 V(\tilde y, u) : u \in X\},
	\eeq
	then for any $u \in X$, we have
	\[
	p(u^*) + \mu_1 V(\tilde x, u^*) + \mu_2 V(\tilde y, u^*)
	\le p(u) + \mu_1 V(\tilde x, u) + \mu_2 V(\tilde y, u)- (\mu_1 + \mu_2) V(u^*,u).
	\]
\end{lemma}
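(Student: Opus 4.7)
The plan is to derive the three-point inequality from the first-order optimality condition of the prox-subproblem \eqref{tech_lemma_prob}, combined with convexity of $p$ and an identity for $V$.

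First, I would write down the optimality condition for $u^*$. Note that for any fixed $\tilde z\in X$, the map $u\mapsto V(\tilde z,u)=w(u)-w(\tilde z)-\langle w'(\tilde z),u-\tilde z\rangle$ is just $w(u)$ plus an affine function, so its subdifferential at $u^*$ equals $\partial w(u^*)-\{w'(\tilde z)\}$. Since $u^*$ minimizes the convex objective $p(u)+\mu_1 V(\tilde x,u)+\mu_2 V(\tilde y,u)$ over $X$, there exist subgradients $p'(u^*)\in\partial p(u^*)$ and a \emph{single} element $w'(u^*)\in\partial w(u^*)$ such that
\[
\langle p'(u^*)+(\mu_1+\mu_2)\,w'(u^*)-\mu_1 w'(\tilde x)-\mu_2 w'(\tilde y),\,u-u^*\rangle \ge 0,\quad \forall u\in X.
\]

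Second, I would invoke convexity of $p$ to obtain $p(u)-p(u^*)\ge \langle p'(u^*),u-u^*\rangle$ for every $u\in X$.

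Third, using the very same subgradient $w'(u^*)$ to define $V(u^*,u):=w(u)-w(u^*)-\langle w'(u^*),u-u^*\rangle$, I would expand the definition \eqref{primal_prox} to get the exact identity
\[
V(\tilde x,u)-V(\tilde x,u^*)-V(u^*,u)=\langle w'(u^*)-w'(\tilde x),\,u-u^*\rangle,
\]
and the analogous identity with $\tilde y$ in place of $\tilde x$. Multiplying by $\mu_1$ and $\mu_2$ respectively and summing reproduces exactly the $w$-subgradient combination that appears in the optimality condition. Substituting the optimality condition and then the convexity bound on $p$ yields
\[
\mu_1[V(\tilde x,u)-V(\tilde x,u^*)]+\mu_2[V(\tilde y,u)-V(\tilde y,u^*)]-(\mu_1+\mu_2)V(u^*,u)\ge -\bigl(p(u)-p(u^*)\bigr),
\]
and rearranging delivers the claimed inequality.

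The one nontrivial point—not a technical obstacle so much as a subtlety worth flagging—is that $w$ is only assumed convex, not differentiable, so $V(x^0,\cdot)$ is defined only up to a choice of subgradient $w'(x^0)$. The identity in the third step is valid only when the subgradient used to define $V(u^*,u)$ is taken to be the same $w'(u^*)$ that appears in the optimality condition. This is legitimate because we are free to fix such a choice, but it must be stated explicitly; everything else is a routine rearrangement.
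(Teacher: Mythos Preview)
The paper does not supply its own proof of this lemma; it is quoted verbatim from \cite[Lemma~2]{GhaLan12-2a}. Your argument is correct and is exactly the standard derivation of the three-point inequality: first-order optimality over $X$, convexity of $p$, and the Bregman three-point identity $V(\tilde z,u)-V(\tilde z,u^*)-V(u^*,u)=\langle w'(u^*)-w'(\tilde z),u-u^*\rangle$, combined and rearranged. Your remark about fixing the subgradient $w'(u^*)$ used in defining $V(u^*,\cdot)$ to coincide with the one furnished by the optimality condition is the right way to handle the nondifferentiable case and matches how this is treated in the cited source.
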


The following result examines the optimality conditions associated with the definition of $x_t$ in Line~\ref{eqn:xt} of Algorithm~\ref{algVRASGD}.

\begin{lemma} \label{lem_VRASGD_opt_cond}
	For any $x \in X$, we have
	\begin{align*}
	\gamma_s [l_f(\underline x_t, x_t) - l_f(\underline x_t, x) + h(x_t) - h(x)] \le \gamma_s \mu V(\underline x_t, x) + V(x_{t-1}, x) - (1+\mu \gamma_s) V(x_t, x) \\
	- \tfrac{1+\mu \gamma_s}{2} \|x_t - x^+_{t-1}\|^2 - \gamma_s \langle \delta_t, x_t - x \rangle.
	\end{align*}
\end{lemma}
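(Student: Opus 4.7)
The plan is to apply the three-point prox-mapping inequality from Lemma~\ref{tech1_prox} to the subproblem defining $x_t$, and then convert the resulting bound into the form stated in the lemma using two algebraic manipulations: rewriting $G_t$ via the noise decomposition $G_t = \nabla f(\underline x_t) + \delta_t$, and absorbing the $V$-terms at $x_t$ into a single squared-norm term anchored at $x^+_{t-1}$.

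First, I would invoke Lemma~\ref{tech1_prox} with $p(u) := \gamma_s\bigl[\langle G_t,u\rangle + h(u)\bigr]$, prox-centers $\tilde x = \underline x_t$ and $\tilde y = x_{t-1}$, and coefficients $\mu_1 = \gamma_s \mu$ and $\mu_2 = 1$, so that $u^{*} = x_t$ by Line~\ref{eqn:xt}. For any $x \in X$ this yields
\[
\gamma_s\bigl[\langle G_t, x_t-x\rangle + h(x_t)-h(x)\bigr]
\le \gamma_s \mu V(\underline x_t, x) + V(x_{t-1},x) - (1+\mu\gamma_s)V(x_t,x) - \gamma_s \mu V(\underline x_t, x_t) - V(x_{t-1}, x_t).
\]
Next, using $l_f(\underline x_t, x_t) - l_f(\underline x_t, x) = \langle \nabla f(\underline x_t), x_t - x\rangle$ together with $G_t = \nabla f(\underline x_t) + \delta_t$, I can rewrite $\gamma_s \langle G_t, x_t-x\rangle$ as $\gamma_s[l_f(\underline x_t,x_t) - l_f(\underline x_t,x)] + \gamma_s\langle \delta_t, x_t-x\rangle$, which converts the left-hand side into the form stated in the lemma and introduces the $-\gamma_s\langle \delta_t, x_t - x\rangle$ term on the right.

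The only remaining step is to bound the negative $V$-terms at $x_t$: I need to show
\[
\gamma_s \mu V(\underline x_t, x_t) + V(x_{t-1}, x_t) \ge \tfrac{1+\mu\gamma_s}{2}\|x_t - x^+_{t-1}\|^2.
\]
By the strong convexity inequality \eqref{V_strong}, the left-hand side is at least $\tfrac{\mu\gamma_s}{2}\|x_t - \underline x_t\|^2 + \tfrac{1}{2}\|x_t - x_{t-1}\|^2$. For the right-hand side, the definition of $x^+_{t-1}$ in \eqref{def_x_t_plus} gives
\[
(1+\mu\gamma_s)(x_t - x^+_{t-1}) = (x_t - x_{t-1}) + \mu\gamma_s(x_t - \underline x_t),
\]
so the convexity of $\|\cdot\|^2$ applied to the convex combination with weights $\tfrac{1}{1+\mu\gamma_s}$ and $\tfrac{\mu\gamma_s}{1+\mu\gamma_s}$ yields
\[
\|(x_t - x_{t-1}) + \mu\gamma_s(x_t - \underline x_t)\|^2 \le (1+\mu\gamma_s)\bigl(\|x_t-x_{t-1}\|^2 + \mu\gamma_s\|x_t-\underline x_t\|^2\bigr),
\]
which after dividing by $2(1+\mu\gamma_s)$ delivers exactly the desired upper bound on $\tfrac{1+\mu\gamma_s}{2}\|x_t - x^+_{t-1}\|^2$. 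Combining this with the previous two displays produces the stated inequality.

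The only mildly non-routine step is the convexity-of-squared-norm manipulation that collapses the two separate quadratic terms into one anchored at $x^+_{t-1}$; once this is set up correctly, everything else is a direct application of Lemma~\ref{tech1_prox} and the noise-decomposition identity.
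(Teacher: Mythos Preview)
Your proposal is correct and follows essentially the same route as the paper: apply Lemma~\ref{tech1_prox} to the $x_t$-subproblem, rewrite $\langle G_t, x_t-x\rangle$ via $G_t=\nabla f(\underline x_t)+\delta_t$ and the definition of $l_f$, and then lower-bound $\gamma_s\mu V(\underline x_t,x_t)+V(x_{t-1},x_t)$ by $\tfrac{1+\mu\gamma_s}{2}\|x_t-x^+_{t-1}\|^2$ using \eqref{V_strong} and the convexity of the squared norm. The only cosmetic difference is that the paper cites ``convexity of $\|\cdot\|$'' for the last step whereas you invoke convexity of $\|\cdot\|^2$ directly; both justify the same inequality.
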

\begin{proof}
	It follows from Lemma \ref{tech1_prox} and the definition of $x_t$ in Algorithm~\ref{algVRASGD} that
	\begin{align*}
	\gamma_s[\langle G_t, x_t - x \rangle + h(x_t) - h(x) +\mu V(\underline x_t, x_t)] +  + V(x_{t-1}, x_t) \nn\\
	\le \gamma_s \mu V(\underline x_t, x) + V(x_{t-1}, x) - (1+\mu \gamma_s) V(x_t, x).
	\end{align*}
	Also observe that
	\begin{align*}
	\langle G_t, x_t - x \rangle &= \langle \nabla f(\underline x_t), x_t - x \rangle + \langle \delta_t, x_t - x\rangle
	= l_f(\underline x_t, x_t) - l_f(\underline x_t, x) + \langle \delta_t, x_t - x \rangle
	\end{align*}
	and
	\begin{align*}
	\gamma_s \mu V(\underline x_t, x_t) + V(x_{t-1}, x_t) &\ge \tfrac{1}{2} \left( \mu \gamma_s \|x_t - \underline x_t\|^2 + \|x_t - x_{t-1}\|^2\right)\\
	&\ge \tfrac{1+\mu \gamma_s}{2} \|x_t - x^+_{t-1}\|^2,
	\end{align*}
	where the last inequality follows from the definition of $x^+_{t-1}$ in \eqnok{def_x_t_plus} and the convexity of $\|\cdot\|$.
	The result then follows by combining the above three relations.
\end{proof}

\vgap
We now show the possible progress made by each inner iteration of the \algone method.
\begin{lemma} \label{Lemma_VARSGD_iter}
	Assume that $\alpha_s \in [0,1]$, $p_s \in [0,1]$ and $\gamma_s > 0$ satisfy
	\begin{align}
	1+\mu \gamma_s - L \alpha_s \gamma_s > 0, \label{VRASGD_cond1}\\
	p_s -  \tfrac{L_Q \alpha_s \gamma_s}{1+\mu \gamma_s - L \alpha_s \gamma_s} \ge  0. \label{VRASGD_cond2}
	\end{align}
	Then, conditional on $x_1, \ldots, x_{t-1}$, we have
	\begin{align} \label{eq:lemma_iter}
	\tfrac{\gamma_s}{\alpha_s}\bbe[\psi(\bar x_t) - \psi(x)]&+ (1+\mu \gamma_s)\bbe[V(x_t,x)]\nn \\
	&\le \tfrac{\gamma_s}{\alpha_s}(1-\alpha_s - p_s) [\psi(\bar x_{t-1})-\psi(x)] + \tfrac{\gamma_sp_s}{\alpha_s}[\psi(\tilde x)-\psi(x)]
	+ V(x_{t-1},x)
	\end{align}
	for any $x \in X$.
\end{lemma}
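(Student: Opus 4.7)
The plan is to build a one-step descent inequality by (i) upper-bounding $\psi(\bar x_t)$ using smoothness and convexity to separate the contribution of $x_t$, (ii) substituting the optimality condition from Lemma~\ref{lem_VRASGD_opt_cond} to trade $l_f(\underline x_t,x_t)+h(x_t)$ for a distance-decrease plus a penalty $\|x_t-x^+_{t-1}\|^2$, (iii) absorbing the stochastic cross-term $\langle \delta_t, x_t-x\rangle$ via unbiasedness and Young's inequality, and (iv) invoking the variance bound to convert the residual noise into a telescopable quantity.

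First I would apply the descent lemma for $L$-smooth $f$ to get $f(\bar x_t)\le l_f(\underline x_t,\bar x_t)+\tfrac{L}{2}\|\bar x_t-\underline x_t\|^2$ and substitute the identity $\bar x_t-\underline x_t=\alpha_s(x_t-x^+_{t-1})$ from \eqref{eqn:barx-xplus}. Using the affine decomposition $\bar x_t=(1-\alpha_s-p_s)\bar x_{t-1}+\alpha_s x_t+p_s\tilde x$ together with linearity of $l_f(\underline x_t,\cdot)$, convexity $l_f(\underline x_t,\bar x_{t-1})\le f(\bar x_{t-1})$, and convexity of $h$ applied to the same combination, I arrive at
\begin{align*}
\psi(\bar x_t)\le{}&(1-\alpha_s-p_s)\psi(\bar x_{t-1})+p_s\bigl[l_f(\underline x_t,\tilde x)+h(\tilde x)\bigr]\\
&+\alpha_s\bigl[l_f(\underline x_t,x_t)+h(x_t)\bigr]+\tfrac{L\alpha_s^2}{2}\|x_t-x^+_{t-1}\|^2.
\end{align*}
Crucially I keep $l_f(\underline x_t,\tilde x)$ (not yet upper-bounded by $f(\tilde x)$) because it needs to cancel against the variance estimate later.

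Next I would plug in Lemma~\ref{lem_VRASGD_opt_cond} scaled by $\alpha_s/\gamma_s$ to substitute the $\alpha_s[l_f(\underline x_t,x_t)+h(x_t)]$ term, producing $\alpha_s[l_f(\underline x_t,x)+h(x)]+\alpha_s\mu V(\underline x_t,x)+\tfrac{\alpha_s}{\gamma_s}V(x_{t-1},x)-\tfrac{\alpha_s(1+\mu\gamma_s)}{\gamma_s}V(x_t,x)-\tfrac{\alpha_s(1+\mu\gamma_s)}{2\gamma_s}\|x_t-x^+_{t-1}\|^2-\alpha_s\langle\delta_t,x_t-x\rangle$. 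Taking conditional expectation, I split $-\alpha_s\langle\delta_t,x_t-x\rangle=-\alpha_s\langle\delta_t,x^+_{t-1}-x\rangle-\alpha_s\langle\delta_t,x_t-x^+_{t-1}\rangle$; the first piece vanishes because $x^+_{t-1}$ is deterministic conditional on $x_1,\dots,x_{t-1}$ and $\bbe[\delta_t]=0$ by \eqref{SVRG_bnd1}. For the second piece I use Young's inequality with parameter $\tau=\alpha_s(1+\mu\gamma_s-L\alpha_s\gamma_s)/\gamma_s$, which is positive by \eqref{VRASGD_cond1}; this choice is tuned so that the resulting $\tfrac{\tau}{2}\|x_t-x^+_{t-1}\|^2$ exactly combines with the $\tfrac{L\alpha_s^2}{2}$ and $-\tfrac{\alpha_s(1+\mu\gamma_s)}{2\gamma_s}$ pieces to annihilate the $\|x_t-x^+_{t-1}\|^2$ coefficient. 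The residual noise term becomes $\tfrac{\alpha_s\gamma_s}{2(1+\mu\gamma_s-L\alpha_s\gamma_s)}\bbe\|\delta_t\|_*^2$, which by the variance bound \eqref{SVRG_bnd2} is at most $\tfrac{\alpha_s\gamma_s L_Q}{1+\mu\gamma_s-L\alpha_s\gamma_s}\bigl[f(\tilde x)-l_f(\underline x_t,\tilde x)\bigr]$.

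Finally I would collect all $l_f(\underline x_t,\tilde x)$ terms: the coefficient is $p_s-\tfrac{\alpha_s\gamma_s L_Q}{1+\mu\gamma_s-L\alpha_s\gamma_s}\ge 0$ by \eqref{VRASGD_cond2}, so using $l_f(\underline x_t,\tilde x)\le f(\tilde x)$ turns the $\tilde x$-contribution into $p_s f(\tilde x)$, which joins $p_s h(\tilde x)$ to give $p_s\psi(\tilde x)$. The strong-convexity inequality \eqref{def_strongconvexity} gives $l_f(\underline x_t,x)+\mu V(\underline x_t,x)\le f(x)$, so the $x$-dependent terms merge into $\alpha_s\psi(x)$. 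Moving $\psi(x)$ to the left (using $(1-\alpha_s-p_s)+p_s+\alpha_s=1$) and multiplying through by $\gamma_s/\alpha_s$ yields \eqref{eq:lemma_iter}. The only delicate step is the tuning of Young's parameter $\tau$ so that conditions \eqref{VRASGD_cond1}--\eqref{VRASGD_cond2} act in tandem to zero out the quadratic penalty and absorb the variance into a $\psi(\tilde x)$ term; once that algebra is arranged, the rest is bookkeeping.
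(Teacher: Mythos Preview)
Your proposal is correct and follows essentially the same approach as the paper's proof: smoothness plus \eqref{eqn:barx-xplus}, substitution of Lemma~\ref{lem_VRASGD_opt_cond}, splitting $\langle\delta_t,x_t-x\rangle$ at $x^+_{t-1}$ with Young's inequality tuned by \eqref{VRASGD_cond1}, and then using \eqref{SVRG_bnd2} together with \eqref{VRASGD_cond2} to collapse the $l_f(\underline x_t,\tilde x)$ terms into $p_s f(\tilde x)$. The only cosmetic difference is that you fold in the convexity of $h$ at the outset to work with $\psi$ directly, whereas the paper keeps $f$ and $h$ separate until the end; the algebra is otherwise identical.
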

\begin{proof}
	Note that by the smoothness of $f$, the definition of $\bar x_t$, and \eqref{eqn:barx-xplus}, we have
	\begin{align*}
	f(\bar x_t) &\le l_f(\underline x_t, \bar x_t) + \tfrac{L}{2} \|\bar x_t - \underline x_t\|^2 \\
	&= (1-\alpha_s - p_s) l_f(\underline x_t, \bar x_{t-1}) + \alpha_s l_f(\underline x_t, x_t) + p_s l_f(\underline x_t, \tilde x)
	+\tfrac{L\alpha_s^2}{2} \|x_t - x_{t-1}^+\|^2.
	\end{align*}
	The above inequality, in view of Lemma~\ref{lem_VRASGD_opt_cond} and the (strong) convexity of $f$, then implies that
	\begin{align}
	f(\bar x_t) &\le (1-\alpha_s - p_s) l_f(\underline x_t, \bar x_{t-1}) \nn\\
	&\quad+
	\alpha_s \left[l_f(\underline x_t, x) + h(x) - h(x_t) + \mu V(\underline x_t, x) + \tfrac{1}{\gamma_s} V(x_{t-1},x) - \tfrac{1+\mu \gamma_s}{\gamma_s} V(x_t,x)
	\right]\nn\\
	&\quad + p_s l_f(\underline x_t, \tilde x) - \tfrac{\alpha_s}{2\gamma_s} (1+\mu \gamma_s - L \alpha_s \gamma_s)\|x_t - x_{t-1}^+\|^2 - \alpha_s \langle \delta_t, x_t - x\rangle \nn \\
	&\le (1-\alpha_s - p_s) f(\bar x_{t-1})
	+ \alpha_s \left[\psi(x) - h(x_t) + \tfrac{1}{\gamma_s} V(x_{t-1},x) - \tfrac{1+\mu \gamma_s}{\gamma_s} V(x_t,x) \right] \nn \\
	&\quad + p_s l_f(\underline x_t, \tilde x) - \tfrac{\alpha_s}{2\gamma_s} (1+\mu \gamma_s - L \alpha_s \gamma_s)\|x_t - x_{t-1}^+\|^2\nn\\
	&\quad - \alpha_s \langle \delta_t, x_t - x_{t-1}^+\rangle - \alpha_s \langle \delta_t,x_{t-1}^+ - x\rangle\nn \\
	&\le  (1-\alpha_s - p_s) f(\bar x_{t-1})
	+ \alpha_s \left[\psi(x) - h(x_t) + \tfrac{1}{\gamma_s} V(x_{t-1},x) - \tfrac{1+\mu \gamma_s}{\gamma_s} V(x_t,x) \right]\nn \\
	&\quad +  p_s l_f(\underline x_t, \tilde x) + \tfrac{\alpha_s \gamma_s\|\delta_t\|_*^2}{2(1+\mu \gamma_s - L \alpha_s \gamma_s)}
	- \alpha_s \langle \delta_t,x_{t-1}^+ - x\rangle,\label{rel1}
	\end{align}
	where the last inequality follows from the fact that $b\langle u,v\rangle - a\|v\|^2/2 \le b^2\|u\|^2/(2a), \forall a>0$.
	Note that by \eqnok{SVRG_bnd1}, \eqnok{SVRG_bnd2}, \eqnok{VRASGD_cond2} and the convexity of $f$, we have, conditional on $x_1, \ldots, x_{t-1}$,
	\begin{align*}
	& p_s l_f(\underline x_t, \tilde x) + \tfrac{\alpha_s \gamma_s\bbe[\|\delta_t\|_*^2]}{2(1+\mu \gamma_s - L \alpha_s \gamma_s)} - \alpha_s \bbe[\langle \delta_t,x_{t-1}^+ - x\rangle]\\
	& \le  p_s l_f(\underline x_t, \tilde x) + \tfrac{L_Q\alpha_s \gamma_s}{1+\mu \gamma_s - L \alpha_s \gamma_s} [f(\tilde x) - l_f(\underline x_t, \tilde x) ]\\
	&\le \left(p_s -  \tfrac{L_Q \alpha_s \gamma_s}{1+\mu \gamma_s - L \alpha_s \gamma_s}\right) l_f(\underline x_t, \tilde x)
	+  \tfrac{L_Q \alpha_s \gamma_s}{1+\mu \gamma_s - L \alpha_s \gamma_s}f(\tilde x)
	\le p_s f(\tilde x).
	\end{align*}
	Moreover, by convexity of $h$, we have $h(\bar x_t) \le (1-\alpha_s - p_s) h(\bar x_{t-1}) + \alpha_s h(x_t) + p_s h(\tilde x)$.
	Summing up the previous three conclusions, we obtain
	\begin{align*}
	\bbe[\psi(\bar x_t)+ \tfrac{\alpha_s(1+\mu \gamma_s)}{\gamma_s}V(x_t,x)] \le (1-\alpha_s - p_s) \psi(\bar x_{t-1}) + p_s \psi(\tilde x) + \alpha_s \psi(x)
	+ \tfrac{\alpha_s}{\gamma_s}V(x_{t-1},x).
	\end{align*}
	The result then follows by subtracting $\psi(x)$ from both sides of the above inequality.
\end{proof}

\subsection{Smooth convex problems}\label{sec:deter-sm-pf}
In this subsection, we assume that $f$ is not necessarily strongly convex, i.e., $\mu = 0$ in \eqref{def_strongconvexity}.
Lemma \ref{lem:deter_smooth_one_epoch} below
shows possible decrease of functional value
in each epoch of \algone for solving these problems.

\begin{lemma}\label{lem:deter_smooth_one_epoch}
	Assume that for each epoch $s$, $s \ge 1$, the parameters $\alpha_s$, $\gamma_s$, $p_s$
	and $T_s$ are chosen such that \eqnok{VRASGD_cond1}-\eqnok{VRASGD_cond2} hold. Also, let us set
	$\theta_t$ to \eqref{def_theta_acc_SVRG}.
	Moreover, let us denote
	\beq \label{def_LHs}
	{\cal L}_s :=  \tfrac{\gamma_s}{\alpha_s} + (T_s -1)  \tfrac{\gamma_s (\alpha_s + p_s)}{\alpha_s},\
	{\cal R}_s := \tfrac{\gamma_s}{\alpha_s} (1-\alpha_s) + (T_s-1) \tfrac{\gamma_s p_s}{\alpha_s},
	\eeq
	and assume that
	\beq\label{def_ws}
	w_s := {\cal L}_s - {\cal R}_{s+1} \ge 0, \forall s \ge 1.
	\eeq
	Then we have
	\begin{align}
	&{\cal L}_s \bbe[\psi(\tilde{x}^s) - \psi(x)] + (\tsum_{j=1}^{s-1} w_j)
	\bbe[\psi(\bar x^s) - \psi(x)]   \nn \\
	&\quad \quad \le {\cal R}_1 \bbe[\psi(\tilde{x}^{0}) - \psi(x)]
	+\bbe[V(x^{0},x) - V(x^s,x)]  \label{VRASGD_smooth}
	\end{align}
	for any $x \in X$, where
	\beq \label{def_w_output}
	\bar x^s :=  (\tsum_{j=1}^{s-1} w_j) \tsum_{j=1}^{s-1}(w_j \tilde x^j).
	\eeq
\end{lemma}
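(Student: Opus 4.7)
The plan is to first derive a clean per-epoch recursion of the form
$\mathcal{L}_s \bbe[\psi(\tilde{x}^s) - \psi(x)] \le \mathcal{R}_s \bbe[\psi(\tilde{x}^{s-1}) - \psi(x)] + \bbe[V(x^{s-1},x) - V(x^s,x)]$,
and then telescope it over epochs, using convexity of $\psi$ to convert the weighted sum of $\tilde{x}^j$'s into the single output point $\bar{x}^s$.

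For the per-epoch recursion, I would start with the inequality provided by Lemma~\ref{Lemma_VARSGD_iter} (specialized to $\mu = 0$) and simply add it up for $t = 1, \ldots, T_s$ (i.e., using unit weight per inner iteration). The $V$ terms telescope automatically since $V(x_t,x)$ on the LHS of iteration $t$ cancels $V(x_{t-1},x)$ on the RHS of iteration $t+1$, leaving $V(x^{s-1},x) - V(x^s,x)$. The $\psi(\bar{x}_t)$ terms combine into $\sum_{t=1}^{T_s-1}\tfrac{\gamma_s(\alpha_s + p_s)}{\alpha_s}\psi(\bar{x}_t) + \tfrac{\gamma_s}{\alpha_s}\psi(\bar{x}_{T_s})$, which is precisely $\sum_{t=1}^{T_s}\theta_t \psi(\bar{x}_t)$; the residual boundary term $\tfrac{\gamma_s(1 - \alpha_s - p_s)}{\alpha_s}\psi(\bar{x}_0)$ contributes to the $\psi(\tilde{x}^{s-1})$ side because $\bar{x}_0 = \tilde{x}^{s-1}$. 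Collecting the coefficient of $\psi(\tilde{x}^{s-1})$ yields $\tfrac{\gamma_s(1-\alpha_s-p_s)}{\alpha_s} + T_s \tfrac{\gamma_s p_s}{\alpha_s} = \mathcal{R}_s$ by direct computation, and the coefficient of $-\psi(x)$ on the RHS works out to $\mathcal{L}_s - \mathcal{R}_s = T_s\gamma_s$. Applying convexity of $\psi$ together with the definition $\tilde{x}^s = (\sum_t \theta_t \bar{x}_t)/\sum_t \theta_t$ and noting $\sum_t \theta_t = \mathcal{L}_s$ gives the per-epoch bound.

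For the telescoping step, I would rewrite the per-epoch inequality as
$w_j \bbe[\psi(\tilde{x}^j) - \psi(x)] + \mathcal{R}_{j+1}\bbe[\psi(\tilde{x}^j) - \psi(x)] \le \mathcal{R}_j \bbe[\psi(\tilde{x}^{j-1}) - \psi(x)] + \bbe[V(x^{j-1},x) - V(x^j,x)]$
using $\mathcal{L}_j = w_j + \mathcal{R}_{j+1}$. Summing over $j = 1, \ldots, s-1$ causes the $\mathcal{R}_j$-terms to telescope (leaving $\mathcal{R}_1[\psi(\tilde{x}^0) - \psi(x)]$ and $\mathcal{R}_s[\psi(\tilde{x}^{s-1}) - \psi(x)]$), the $V$-terms to telescope to $V(x^0,x) - V(x^{s-1},x)$, and the remainders $w_j\bbe[\psi(\tilde{x}^j) - \psi(x)]$ to accumulate on the LHS. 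Adding the bare per-epoch inequality for $j = s$ absorbs the residual $\mathcal{R}_s\bbe[\psi(\tilde{x}^{s-1}) - \psi(x)]$ and introduces $\mathcal{L}_s \bbe[\psi(\tilde{x}^s) - \psi(x)]$; crucially, $w_j \ge 0$ justifies keeping each term on the LHS. Finally, a second application of convexity, $\psi(\bar{x}^s) \le \sum_{j=1}^{s-1} w_j \psi(\tilde{x}^j)/\sum_{j=1}^{s-1} w_j$, produces the desired expression.

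The main obstacles I anticipate are purely bookkeeping: (i) verifying that the unit-weight sum of the inner inequalities produces exactly the $\theta_t$-weights on $\psi(\bar{x}_t)$ and exactly $\mathcal{R}_s$ on $\psi(\tilde{x}^{s-1})$, which requires careful separation of the $t=T_s$ boundary term; and (ii) confirming that the conditions \eqref{VRASGD_cond1}--\eqref{VRASGD_cond2} are what is needed to invoke Lemma~\ref{Lemma_VARSGD_iter} in the first place. No genuinely new estimation is required beyond Lemma~\ref{Lemma_VARSGD_iter}; the non-negativity of $w_s$ assumed in \eqref{def_ws} is exactly what permits dropping favourable terms during the telescoping.
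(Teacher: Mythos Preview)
Your proposal is correct and follows essentially the same route as the paper: specialize Lemma~\ref{Lemma_VARSGD_iter} to $\mu=0$, sum the inner iterations with unit weight to obtain the per-epoch inequality $\mathcal{L}_s\bbe[\psi(\tilde x^s)-\psi(x)]\le\mathcal{R}_s\bbe[\psi(\tilde x^{s-1})-\psi(x)]+\bbe[V(x^{s-1},x)-V(x^s,x)]$, and then telescope across epochs using convexity. Your telescoping write-up is in fact more explicit than the paper's, which just says ``summing over the above relations, using the convexity of $\psi$ and rearranging the terms.''
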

\begin{proof}
	Using our assumptions on $\alpha_s$, $\gamma_s$ and $p_s$, and
	the fact that $\mu = 0$,
	we have
	\begin{align*}
	\tfrac{\gamma_s}{\alpha_s}  \bbe[\psi(\bar x_t) - \psi(x)] &\le  \tfrac{\gamma_s}{\alpha_s} (1-\alpha_s - p_s) \bbe[\psi(\bar x_{t-1}) - \psi(x)] \\
	&\quad+  \tfrac{\gamma_s p_s}{\alpha_s}  \bbe[\psi(\tilde x) - \psi(x)]
	+\bbe[V(x_{t-1},x) - V(x_t,x)].
	\end{align*}
	Summing up these inequalities for $t=1, \ldots, T_s$, using the definition of $\theta_t$ in \eqnok{def_theta_acc_SVRG} and
	the fact that $\bar x_0 = \tilde x$, and rearranging the terms, we have
	\begin{align*}
	\tsum_{t=1}^{T_s} \theta_t  \bbe[\psi(\bar x_t) - \psi(x)]
	& \le  \left[\tfrac{\gamma_s}{\alpha_s} (1-\alpha_s) + (T_s-1) \tfrac{\gamma_s p_s}{\alpha_s}\right] \bbe[\psi(\tilde x) - \psi(x)]  \\
	&\quad +\bbe[V(x_{0},x) - V(x_T,x)].
	\end{align*}
	Now using the facts that $x^s = x_{T}$, $x_0=x^{s-1}$, $\tilde{x}^s =\tsum_{t=1}^{T_s}(\theta_t \bar x_t)/\tsum_{t=1}^{T_s} \theta_t$, $\tilde x = \tilde x^{s-1}$,
	and the convexity of $\psi$, we have
	\begin{align*}
	\tsum_{t=1}^{T_s} \theta_t  \bbe[\psi(\tilde{x}^s) - \psi(x)]
	& \le  \left[\tfrac{\gamma_s}{\alpha_s} (1-\alpha_s) + (T_s-1) \tfrac{\gamma_s p_s}{\alpha_s}\right] \bbe[\psi(\tilde{x}^{s-1}) - \psi(x)]  \\
	&\quad +\bbe[V(x^{s-1},x) - V(x^s,x)],
	\end{align*}
	which, in view of the fact that
	$
	\tsum_{t=1}^{T_s} \theta_t =  \tfrac{\gamma_s}{\alpha_s} + (T_s-1)  \tfrac{\gamma_s (\alpha_s + p_s)}{\alpha_s},
	$
	then implies that
	\begin{align}\label{eq:smooth_iter_reuse}
	{\cal L}_s \bbe[\psi(\tilde{x}^s) - \psi(x)] \le {\cal R}_s \bbe[\psi(\tilde{x}^{s-1}) - \psi(x)]
	+\bbe[V(x^{s-1},x) - V(x^s,x)].
	\end{align}
	Summing over the above relations, using the convexity of $\psi$ and rearranging
	the terms, we then obtain \eqnok{VRASGD_smooth}.
\end{proof}

\vgap

With the help of Lemma \ref{lem:deter_smooth_one_epoch}, we are now ready to prove Theorem \ref{Them:main-deter-smooth},
which shows that for solving smooth convex problems the \algone algorithm can achieve a fast linear rate of convergence ${\cal O}\{m\log\frac{D_0}{\epsilon}\}$
if $m \geq D_0/\epsilon$ 
and an optimal sublinear rate of convergence otherwise.

\begin{proofof}{Theorem \ref{Them:main-deter-smooth}}
	Let the probabilities $q_i=L_i / \tsum_{i=1}^m L_i$ for $i = 1, \ldots, m$, and $\theta_t$, $\gamma_s$, $p_s$, $T_s$ and $\alpha_s$ be defined as in \eqnok{def_theta_acc_SVRG}, \eqref{parameter-deter-smooth1} and \eqref{parameter-deter-alpha-sm}.
	By the definition of $L_Q$ in \eqnok{def_LQ} and the selection of $q_i$, we have $L_Q = L$.
	Observe that both conditions in \eqnok{VRASGD_cond1} and \eqnok{VRASGD_cond2} are satisfied since
	\[
	1+\mu \gamma_s - L \alpha_s \gamma_s = 1 - L \alpha_s \gamma_s = \tfrac{2}{3}
	\]
	and
	\begin{align*}
	p_s -  \tfrac{L_Q \alpha_s \gamma_s}{1+\mu \gamma_s - L \alpha_s \gamma_s} =
	p_s - \tfrac{1}{2} =0.
	\end{align*}
	Now letting ${\cal L}_s$ and ${\cal R}_s$ be defined in \eqnok{def_LHs}, we will show  that ${\cal L}_s \ge {\cal R}_{s+1}$
	for any $s \ge 1$. Indeed, if $1 \le s < s_0$, we have $\alpha_{s+1} = \alpha_s$,
	$\gamma_{s+1} = \gamma_s$, $T_{s+1} = 2 T_s$, and hence
	\begin{align*}
	w_s &= {\cal L}_s - {\cal R}_{s+1} = \tfrac{\gamma_s}{\alpha_s}
	\left[ 1 + (T_s -1)   (\alpha_s + p_s)
	- (1-\alpha_s) - (2 T_{s}-1)  p_{s} \right] \\
	&=\tfrac{\gamma_s}{\alpha_s} \left[  T_s (\alpha_s - p_s )\right] =  0.
	\end{align*}
	Moreover, if $ s \ge s_0$, we have
	\begin{align*}
	w_s &= {\cal L}_s - {\cal R}_{s+1} = \tfrac{\gamma_s}{\alpha_s} - \tfrac{\gamma_{s+1}}{\alpha_{s+1}} (1-\alpha_{s+1})
	+ (T_{s_0} -1) \left[ \tfrac{\gamma_s (\alpha_s + p_s)}{\alpha_s}
	- \tfrac{\gamma_{s+1} p_{s+1}}{\alpha_{s+1}}\right]\\
	&= \tfrac{1}{12L} + \tfrac{(T_{s_0} -1)[2 (s-s_0+4) - 1]}{24 L} \ge 0.
	\end{align*}
	Using these observations in \eqnok{VRASGD_smooth} iteratively, we then conclude that
	\begin{align*}
	{\cal L}_s \bbe[\psi(\tilde{x}^s) - \psi(x)] &\le {\cal R}_1 \bbe[\psi(\tilde{x}^{0}) - \psi(x)]
	+\bbe[V(x^{0},x) - V(x^s,x)]\\
	&\le \tfrac{2}{3L} [\psi(x^{0}) - \psi(x)] + V(x^{0},x)
	\end{align*}
	for any $s \ge 1$, where the last identity follows from the fact that ${\cal R}_1 = \tfrac{2}{3L}$.
	Recalling that $D_0:= 2[\psi(x^0) - \psi(x)] + 3L V(x^0, x)$ in \eqref{def_D_0}, now we distinguish the following two cases.
	
	{\bf Case 1:} if $s \le s_0$,
	$
	{\cal L}_s = \tfrac{2^{s+1}}{3L}.
	$
	Therefore, we have
	\[
	\bbe[\psi(\tilde{x}^s) - \psi(x)] \le 2^{-(s+1)} D_0, \quad 1\le s \le s_0.
	\]
	
	{\bf Case 2:} if $s \ge s_0$, we have
	\begin{align}
	{\cal L}_s &=
	\tfrac{1}{3L \alpha_s^2}\left[ 1+ (T_s -1) (\alpha_s + \tfrac{1}{2} )\right] \notag\\
	&=  \tfrac{(s-s_0+4) (T_{s_0} -1)}{6L} + \tfrac{(s-s_0+4)^2(T_{s_0} +1)}{24 L} \notag\\
	&\ge \tfrac{(s-s_0+4)^2 m}{48 L}, \label{eq:smooth_L_reuse}
	\end{align}
	where the last inequality follows from $T_{s_0} = 2^{\lfloor \log_2 m\rfloor +1-1} \ge m/2$.
	Hence, we obtain
	\[
	\bbe[\psi(\tilde{x}^s) - \psi(x)] \le
	\tfrac{16 D_0}{(s-s_0+4)^2 m} ,\quad  s > s_0.
	\]
	In conclusion, we have for any $x \in X$,
	\beq \label{results:acc_SVRG_non_stronglyconvex}
	\bbe[\psi(\tilde{x}^s) - \psi(x)] \le
	\begin{cases}
		2^{-(s+1)} D_0,  & 1\le s \le s_0,\\
		\tfrac{16 D_0}{(s-s_0+4)^2 m} , & s > s_0.
	\end{cases}
	\eeq
	In order to derive the complexity bounds in Theorem~\ref{Them:main-deter-smooth},
	let us first consider the region of relatively low accuracy and/or large number of components, i.e., $m \ge D_0/\epsilon$.
	In this case \algone needs to run at most $s_0$ epochs because by the first case of \eqref{results:acc_SVRG_non_stronglyconvex} we can easily check that
	\[
	\tfrac{D_0}{2^{s_0+1}} \le \epsilon.
	\]
	More precisely, the number of epochs can be bounded by
	$
	S_l := \min\left\{  \log \tfrac{D_0}{\epsilon}, s_0 \right\}.
	$
	Hence the total number of gradient evaluations can be bounded
	by
	\begin{align}
	m S_l + \tsum_{s=1}^{S_l} T_s &= m S_l + \tsum_{s=1}^{S_l} 2^{s-1}
	= {\cal O}\left\{  \min\left(m \log \tfrac{D_0}{\epsilon}, m \log m \right) \right\}
	= {\cal O}\left\{  m \log \tfrac{D_0}{\epsilon} \right\}, \label{eq:smooth_sfo_reuse1}
	\end{align}
	where the last identity follows from the assumption that $m \ge D_0/\epsilon $.
	Now let us consider the region for high accuracy and/or smaller number of components, i.e., $m < D_0/\epsilon$. In this case, we may need to run the algorithm
	for more than $s_0$ epochs. More precisely, the total number of epochs can be bounded by
	$
	S_h :=\left \lceil \sqrt{\tfrac{16 D_0}{m \epsilon}} + s_0 -4 \right\rceil.
	$
	Note that the total number of gradient evaluations needed for the first $s_0$ epochs
	can be bounded by $m s_0 + \tsum_{s=1}^{s_0} T_s$ while the total number of gradient evaluations for the remaining epochs
	can be bounded by $(T_{s_0} + m)(S_h - s_0)$.
	As a consequence, the total number of gradient evaluations of $f_i$ can be bounded by
	\begin{align}
	m s_0 + \tsum_{s=1}^{s_0} T_s + (T_{s_0} + m)(S_h - s_0)
	&\leq \tsum_{s=1}^{s_0} T_s +  (T_{s_0} + m) S_h
	= {\cal O} \left\{\sqrt{\tfrac{m D_0}{\epsilon}} + m \log m \right\}.\label{eq:smooth_sfo_reuse2}
	\end{align}
	Therefore, the results of Theorem \ref{Them:main-deter-smooth} follows immediately by combining these two cases.
\end{proofof}

\vgap

\subsection{Convex finite-sum problems with or without strong convexity}\label{sec:deter-unified-pf}
In this subsection, we provide a unified analysis of \algone when $f$ is possibly strongly convex, i.e., $\mu \ge 0$ in \eqref{def_strongconvexity}.
In particular, it achieves a stronger rate of convergence than other RIG methods if the condition number $L/\mu$ is very large.
Below we consider four different cases and establish the convergence properties of \algone in each case.

\begin{lemma}
	If $s \le s_0$, then for any $x \in X$,
	\[
	\bbe[\psi(\tilde{x}^s) - \psi(x)] \le 2^{-(s+1)} D_0, \quad 1\le s \le s_0,
	\]
	where $D_0$ is defined in \eqref{def_D_0}.
\end{lemma}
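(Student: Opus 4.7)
The plan is to mimic Case~1 of the proof of Theorem~\ref{Them:main-deter-smooth}, since for $s \le s_0$ the parameter choices prescribed by Theorem~\ref{Them:main-deter-sc} (namely $T_s = 2^{s-1}$, $\alpha_s = 1/2$, $\gamma_s = 2/(3L)$, $p_s = 1/2$, and $\theta_t$ given by \eqref{def_theta_acc_SVRG}) coincide with the smooth-case parameters. The only new feature is that $\mu$ may be strictly positive, but I will show this only helps.

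First I would verify that both hypotheses \eqref{VRASGD_cond1}--\eqref{VRASGD_cond2} of Lemma~\ref{Lemma_VARSGD_iter} hold for any $\mu \ge 0$. Indeed, $1 + \mu\gamma_s - L\alpha_s\gamma_s = 2/3 + \mu\gamma_s \ge 2/3 > 0$, and since the choice $q_i = L_i/\sum_j L_j$ gives $L_Q = L$, one has $p_s - L_Q \alpha_s \gamma_s/(1 + \mu\gamma_s - L\alpha_s\gamma_s) \ge 1/2 - (1/3)/(2/3) = 0$. Applying Lemma~\ref{Lemma_VARSGD_iter} and weakening the LHS via $(1 + \mu\gamma_s)\bbe[V(x_t,x)] \ge \bbe[V(x_t,x)]$ puts the per-iteration inequality into exactly the form treated in Lemma~\ref{lem:deter_smooth_one_epoch}. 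Summing for $t = 1, \ldots, T_s$ with the weights $\theta_t$ in \eqref{def_theta_acc_SVRG} and invoking the convexity of $\psi$ (via Jensen and the definition of $\tilde x^s$) yields the one-epoch recursion \eqref{eq:smooth_iter_reuse}:
$$\mathcal{L}_s \bbe[\psi(\tilde x^s) - \psi(x)] \le \mathcal{R}_s \bbe[\psi(\tilde x^{s-1}) - \psi(x)] + \bbe[V(x^{s-1}, x) - V(x^s, x)],$$
with $\mathcal{L}_s,\mathcal{R}_s$ as in \eqref{def_LHs}.

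Next I would plug in the parameters to compute $\mathcal{L}_s = 2^{s+1}/(3L)$ and $\mathcal{R}_s = 2^{s}/(3L)$ for $1 \le s \le s_0$, so that $\mathcal{L}_s = \mathcal{R}_{s+1}$, i.e.\ $w_s = 0$. This matches the $s < s_0$ computation already carried out in the proof of Theorem~\ref{Them:main-deter-smooth} and lets the recursion telescope over epochs to
$$\mathcal{L}_s \bbe[\psi(\tilde x^s) - \psi(x)] \le \mathcal{R}_1[\psi(\tilde x^0) - \psi(x)] + V(x^0, x) - \bbe[V(x^s, x)].$$
Dropping the nonpositive $-\bbe[V(x^s,x)]$ term and substituting $\mathcal{R}_1 = 2/(3L)$, $\mathcal{L}_s = 2^{s+1}/(3L)$, and the definition \eqref{def_D_0} of $D_0$ delivers $\bbe[\psi(\tilde x^s) - \psi(x)] \le 2^{-(s+1)} D_0$, as desired.

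The only subtlety is handling the strong-convexity term $\mu\gamma_s V(x_t,x)$ that Lemma~\ref{Lemma_VARSGD_iter} produces but Lemma~\ref{lem:deter_smooth_one_epoch} discards; the fix is simply to bound $(1+\mu\gamma_s) \ge 1$ on the LHS, throwing away useful information. In this regime that is harmless, since the target bound $2^{-(s+1)}D_0$ is meant to hold uniformly in $\mu \ge 0$ and already matches the smooth-case rate; the strong convexity will be exploited later, only in the $s > s_0$ phase where $\alpha_s$ becomes $\mu$-dependent via \eqref{parameter-deter-alpha-unified}.
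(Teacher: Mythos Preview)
Your proposal is correct and follows essentially the same route as the paper: both verify \eqref{VRASGD_cond1}--\eqref{VRASGD_cond2}, discard the strong-convexity contribution via $(1+\mu\gamma_s)\ge 1$, and then reproduce the smooth-case telescoping argument with $\mathcal{L}_s=2^{s+1}/(3L)$, $\mathcal{R}_1=2/(3L)$. The only cosmetic difference is that the paper rederives the per-epoch summation directly (obtaining the two-term bound \eqref{eq:key2} that also retains $\bbe[V(x^s,x)]$ for reuse in the later cases), whereas you invoke Lemma~\ref{lem:deter_smooth_one_epoch}/\eqref{eq:smooth_iter_reuse} as a black box and then drop $\bbe[V(x^s,x)]$; for the present lemma this makes no difference.
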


\begin{proof}
	In this case, we have $\alpha_s = p_s = \tfrac{1}{2}$, $\gamma_s = \tfrac{2}{3L}$, and $T_s = 2^{s-1}$.
	It then follows from \eqref{eq:lemma_iter} that
	\begin{align*}
	\tfrac{\gamma_s}{\alpha_s}  \bbe[\psi(\bar x_t) - \psi(x)]
	+ (1 + \mu \gamma_s) \bbe[V(x_t,x)] &\le  \tfrac{\gamma_s }{2\alpha_s}  \bbe[\psi(\tilde x) - \psi(x)] +\bbe[V(x_{t-1},x)].
	\end{align*}
	Summing up the above relation from $t = 1$ to $T_s$, we have
	\begin{align*}
	\tfrac{\gamma_s}{\alpha_s} \tsum_{t=1}^{T_s} \bbe[\psi(\bar x_t) - \psi(x)] + \bbe[V(x_{T_s},x)] +  \mu \gamma_s \tsum_{t=1}^{T_s} \bbe[V(x_t,x)]\\
	\quad \quad \le \tfrac{\gamma_sT_s}{2 \alpha_s}   \bbe[\psi(\tilde x) - \psi(x)] + \bbe[V(x_0, x)].
	\end{align*}
	Note that in this case $\theta_t$ are chosen as in \eqref{def_theta_acc_SVRG},
	i.e., $\theta_t = \tfrac{\gamma_s}{\alpha_s}$, $t = 1, \ldots, T_s$ in the definition of $\tilde x^s$, we then have
	\begin{align*}
	\tfrac{4 T_s}{3L}  \bbe[\psi(\tilde x^s) - \psi(x)] +  \bbe[V(x^s, x)]
	&\le \tfrac{4T_s}{6L}   \bbe[\psi(\tilde x^{s-1}) - \psi(x)] + \bbe[V(x^{s-1}, x)]\\
	&= \tfrac{4T_{s-1}}{3L}   \bbe[\psi(\tilde x^{s-1}) - \psi(x)] + \bbe[V(x^{s-1}, x)],
	\end{align*}
	where we use the facts that $\tilde x = \tilde x^{s-1}$, $x_0 = x^{s-1}$, and $x^s = x_{T_s}$ in the epoch $s$ and the parameter settings in \eqref{parameter-deter-smooth1}.
	Applying this inequality recursively, we then have
	\begin{align}
	\tfrac{4 T_s}{3L}  \bbe[\psi(\tilde x^s) - \psi(x)] + \bbe[V(x^s, x)]
	&\le \tfrac{2}{3L}   \bbe[\psi(\tilde x^{0}) - \psi(x)] + V(x^{0}, x) \notag\\
	&= \tfrac{2}{3L}   \bbe[\psi(x^{0}) - \psi(x)] + V(x^{0}, x).\label{eq:key2}
	\end{align}
	By plugging $T_s=2^{s-1}$ into the above inequality, we obtain the result.
\end{proof}

\vgap

\begin{lemma}
	If $s\ge s_0$ and $m\ge \tfrac{3L}{4\mu}$,
	\begin{align*}
	\bbe[\psi(\tilde x^s) - \psi(x^*)]
	&\le  \left(\tfrac{4}{5}\right)^s D_0,
	\end{align*}
	where $x^*$ is an optimal solution of \eqref{cp}.
\end{lemma}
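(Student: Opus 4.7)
The plan is to run the \algone analysis through Lemma~\ref{Lemma_VARSGD_iter} and produce a per-epoch linear recursion, then contract it with the right Lyapunov. Under the assumption $m \ge \tfrac{3L}{4\mu}$, the formula \eqref{parameter-deter-alpha-unified} simplifies because $\sqrt{m\mu/(3L)}\ge 1/2$, so $\alpha_s=1/2$ for \emph{every} $s$, hence $\gamma_s=\tfrac{2}{3L}$, $p_s=\tfrac{1}{2}$, and crucially $1-\alpha_s-p_s=0$. For $s\le s_0$, the preceding lemma already yields $\bbe[\psi(\tilde x^s)-\psi(x^*)]\le 2^{-(s+1)}D_0$, which is bounded by $(4/5)^s D_0$ because $2^{-(s+1)}\le(4/5)^s$ for all $s\ge 1$; so only the regime $s>s_0$ is substantive.

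For $s>s_0$, I would apply Lemma~\ref{Lemma_VARSGD_iter} with $x=x^*$. Using $\alpha_s=p_s=1/2$ and $\gamma_s=\tfrac{2}{3L}$, it collapses to
\[
\tfrac{4}{3L}\bbe[\psi(\bar x_t)-\psi(x^*)] + (1+\mu\gamma_s)\bbe[V(x_t,x^*)] \le \tfrac{2}{3L}\,\bbe[\psi(\tilde x)-\psi(x^*)] + \bbe[V(x_{t-1},x^*)].
\]
Since in this regime \eqref{def_theta_acc_SVRG_sc} gives $\theta_t=\Gamma_{t-1}$ for every $t$ (again because $1-\alpha_s-p_s=0$), I would multiply the inequality by $\Gamma_{t-1}=(1+\mu\gamma_s)^{t-1}$ and sum over $t=1,\dots,T_s$. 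The prox terms telescope into $\Gamma\bbe[V(x^s,x^*)]-V(x^{s-1},x^*)$, where $\Gamma:=(1+\mu\gamma_s)^{T_{s_0}}$, and $\tsum_{t=1}^{T_{s_0}}\Gamma_{t-1}=(\Gamma-1)/(\mu\gamma_s)=\tfrac{3L(\Gamma-1)}{2\mu}$. Invoking convexity of $\psi$ on the weighted average definition of $\tilde x^s$ then produces the per-epoch recursion
\[
\tfrac{2(\Gamma-1)}{\mu}\bbe[A_s] + \Gamma\,\bbe[B_s] \le \tfrac{\Gamma-1}{\mu}\bbe[A_{s-1}] + \bbe[B_{s-1}],
\]
with $A_s:=\psi(\tilde x^s)-\psi(x^*)$ and $B_s:=V(x^s,x^*)$.

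Next I would set the Lyapunov $M_s:=\tfrac{\Gamma-1}{\mu}\bbe[A_s]+\bbe[B_s]$ and rewrite the recursion as
\[
M_s=\tfrac{1}{2}\cdot\tfrac{2(\Gamma-1)}{\mu}\bbe[A_s]+\tfrac{1}{\Gamma}\cdot\Gamma\bbe[B_s]\le\max\{\tfrac{1}{2},\tfrac{1}{\Gamma}\}\,M_{s-1}.
\]
The condition $m\ge \tfrac{3L}{4\mu}$ forces $\mu\gamma_s T_{s_0}\ge \mu m/(3L)\ge 1/4$, so $\Gamma\ge 5/4$ and therefore $\max\{1/2,1/\Gamma\}\le 4/5$. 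This gives $M_s\le(4/5)^{s-s_0}M_{s_0}$. To bound the base value $M_{s_0}$, I would invoke the inequality $\tfrac{4T_{s_0}}{3L}\bbe[A_{s_0}]+\bbe[B_{s_0}]\le D_0/(3L)$ established in the previous lemma's proof (via \eqref{eq:key2}), which gives $\bbe[A_{s_0}]\le D_0/(4T_{s_0})=D_0/2^{s_0+1}$ and $\bbe[B_{s_0}]\le D_0/(3L)$; combined with the bound $\mu/(\Gamma-1)\le\mu/(\mu\gamma_s T_{s_0})=3L/(2T_{s_0})$ this yields
\[
\tfrac{\mu}{\Gamma-1}M_{s_0}=\bbe[A_{s_0}]+\tfrac{\mu}{\Gamma-1}\bbe[B_{s_0}]\le \tfrac{D_0}{2^{s_0+1}}+\tfrac{D_0}{2T_{s_0}}=\tfrac{3D_0}{2^{s_0+1}}.
\]
Since $\bbe[A_s]\le\tfrac{\mu}{\Gamma-1}M_s$, I conclude $\bbe[A_s]\le 3\,(4/5)^{s-s_0}D_0/2^{s_0+1}$, and an elementary check that $3/2^{s_0+1}\le(4/5)^{s_0}$ for all $s_0\ge 1$ finishes the claimed bound.

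The main obstacle is the selection of the Lyapunov $M_s$: the ``obvious'' choice with coefficient $\Gamma$ on $B_s$ only gives a contraction rate of $\max(1/2,1/\Gamma)$ that matches the base case poorly, and any choice that mixes $A$ and $B$ with weights differing from $(\Gamma-1)/\mu$ versus $1$ breaks either the $4/5$ contraction or the matching of $\mu/(\Gamma-1)$ with $3L/(2T_{s_0})$ at the base case. Only the asymmetric $M_s$ above simultaneously (i) contracts at exactly the rate dictated by the two-ratio bound $\max(1/2,1/\Gamma)$ and (ii) collapses $M_{s_0}$ to a quantity proportional to $D_0/T_{s_0}$, which is precisely what lets the final factor $3/2^{s_0+1}$ absorb into $(4/5)^{s_0}$.
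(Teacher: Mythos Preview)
Your proof is correct and follows essentially the same route as the paper: both multiply the per-iteration bound from Lemma~\ref{Lemma_VARSGD_iter} by $\Gamma_{t-1}$, sum, use $\Gamma_{T_{s_0}}\ge 5/4$ (via $T_{s_0}\ge m/2$ and Bernoulli) to obtain a $4/5$ contraction of the Lyapunov, and then invoke \eqref{eq:key2} at $s_0$ together with $2^{-s_0}\le(4/5)^{s_0}$. The only cosmetic difference is that the paper's Lyapunov $\tfrac{2}{3L}A_s+(\tsum\theta_t)^{-1}B_s$ is a scalar multiple of your $M_s$, and the paper bounds it at $s_0$ directly from \eqref{eq:key2} (yielding $D_0/2^{s_0}$) rather than bounding $A_{s_0}$ and $B_{s_0}$ separately as you do (yielding $3D_0/2^{s_0+1}$); either constant is absorbed by $(4/5)^{s_0}$.
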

\begin{proof}
	In this case,
	we have $\alpha_s = p_s = \tfrac{1}{2}$, $\gamma_s = \gamma = \tfrac{2}{3L}$, and $T_s  \equiv T_{s_0} = 2^{s_0-1}, s\ge s_0$.
	It then follows from \eqref{eq:lemma_iter} that
	\begin{align*}
	\tfrac{4}{3L}  \bbe[\psi(\bar x_t) - \psi(x)] + (1 + \tfrac{2\mu}{3L}) \bbe[V(x_t,x)]
	&\leq \tfrac{2}{3L}  \bbe[\psi(\tilde x) - \psi(x)] +\bbe[V(x_{t-1},x)].
	\end{align*}
	Multiplying both sides of the above inequality by $\Gamma_{t-1}=(1 + \tfrac{2 \mu}{3 L})^{t-1}$, we obtain
	\begin{align*}
	\tfrac{4}{3L} \Gamma_{t-1} \bbe[\psi(\bar x_t) - \psi(x)] +\Gamma_{t} \bbe[V(x_t,x)] \leq \tfrac{2}{3L} \Gamma_{t-1} \bbe[\psi(\tilde x) - \psi(x)] +  \Gamma_{t-1} \bbe[V(x_{t-1},x)].
	\end{align*}
	Note that $\theta_t$ are chosen as in \eqref{def_theta_acc_SVRG_sc} when $s\ge s_0$, i.e.,
	$\theta_t=\Gamma_{t-1}=(1 + \tfrac{2 \mu}{3 L})^{t-1}$, $t = 1, \ldots, T_s$, $s\ge s_0$.
	Summing up the above inequality
	for $t = 1, \ldots, T_s$ we have
	\begin{align*}
	&\tfrac{4}{3L} \tsum_{t=1}^{T_s} \theta_t  \bbe[\psi(\bar x_t) - \psi(x)] +\Gamma_{T_s} \bbe[V(x_{T_s},x)] \\
	&\quad \quad \quad \leq \tfrac{2}{3L} \tsum_{t=1}^{T_s} \theta_t  \bbe[\psi(\tilde x) - \psi(x)] +   \bbe[V(x_{0},x)], \  s\ge s_0.
	\end{align*}
	Observe that for $s\ge s_0$, $m \ge T_s \equiv T_{s_0} = 2^{\lfloor \log_2 m \rfloor} \ge m /2$, and hence that
	\begin{align}
	\Gamma_{T_s}=(1 + \tfrac{2\mu}{3L})^{T_s}= (1 + \tfrac{2\mu}{3L})^{T_{s_0}} \ge 1 + \tfrac{2\mu T_{s_0} }{3L} \ge 1 + \tfrac{T_{s_0}}{2m}\ge \tfrac{5}{4}, \ \forall s\ge s_0, \label{eq:origin}
	\end{align}
	and using the facts that $\tilde{x}^s =\tsum_{t=1}^{T_s}(\theta_t \bar x_t)/\tsum_{t=1}^{T_s} \theta_t$, $\tilde x = \tilde x^{s-1}$, $x_0 = x^{s-1}$, and $x_{T_s}=x^s $ in the $s$ epoch,
	and $\psi(\tilde x^s) - \psi(x^*) \ge 0$,
	we conclude from the above inequalities that
	\begin{align*}
	&\tfrac{5}{4} \left\{ \tfrac{2}{3L}  \bbe[\psi(\tilde x^s) - \psi(x^*)] + ( \tsum_{t=1}^{T_s} \theta_t)^{-1} \bbe[V(x^s,x^*)]\right\}\\
	& \quad \quad \le\tfrac{2}{3L} \bbe[\psi(\tilde x^{s-1}) - \psi(x^*)] + ( \tsum_{t=1}^{T_s} \theta_t)^{-1} \bbe[V(x^{s-1},x^*)], s\ge s_0.
	\end{align*}
	Applying this relation recursively for $s\ge s_0$, we then obtain
	\begin{align*}
	&\tfrac{2}{3L}  \bbe[\psi(\tilde x^s) - \psi(x^*)] + ( \tsum_{t=1}^{T_s} \theta_t)^{-1} \bbe[V(x^s,x^*)] \\
	&\le \left(\tfrac{4}{5} \right)^{s-s_0}  \left\{\tfrac{2}{3L} \bbe[\psi(\tilde x^{s_0}) - \psi(x^*)] + ( \tsum_{t=1}^{T_s} \theta_t)^{-1} \bbe[V(x^{s_0},x^*)]\right\}\\
	&\le  \left(\tfrac{4}{5}\right)^{s-s_0}  \left\{\tfrac{2}{3L} \bbe[\psi(\tilde x^{s_0}) - \psi(x^*)] + \tfrac{1}{T_{s_0}}\bbe[V(x^{s_0},x^*)]\right\},
	\end{align*}
	where the last inequality follows from
	$\tsum_{t=1}^{T_s}\theta_t \ge T_s=T_{s_0}$.
	Plugging \eqref{eq:key2} into the above inequality, we have
	\begin{align*}
	\bbe[\psi(\tilde x^s) - \psi(x^*)]
	&\le  \left(\tfrac{4}{5}\right)^{s-s_0} \tfrac{D_0}{2T_{s_0}}
	=  \left(\tfrac{4}{5}\right)^{s-s_0} \tfrac{D_0}{2^{s_0}}
	\le  \left(\tfrac{4}{5}\right)^s D_0, \ s\ge s_0.
	\end{align*}
	
\end{proof}

\vgap

\begin{lemma}
	If $s_0 <s \le s_0+\sqrt{\tfrac{12L}{m \mu}}-4$ and $m < \tfrac{3L}{4\mu}$,
	then for any $x \in X$,
	\begin{align*}
	\bbe[\psi(\tilde x^s) - \psi(x)]
	\le  \tfrac{16 D_0}{(s-s_0+4)^2 m}.
	\end{align*}
\end{lemma}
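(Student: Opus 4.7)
The plan is to reuse almost verbatim the analysis of Lemma~\ref{lem:deter_smooth_one_epoch} and Case~2 of the proof of Theorem~\ref{Them:main-deter-smooth}, exploiting the fact that in the specified range of $s$ the algorithmic parameters $(\alpha_s,\gamma_s,p_s,T_s,\theta_t)$ coincide with those used in the non-strongly-convex analysis. Indeed, the condition $s-s_0+4\le\sqrt{12L/(m\mu)}$ forces $2/(s-s_0+4)\ge\sqrt{m\mu/(3L)}$, so by \eqref{parameter-deter-alpha-unified} we have $\alpha_s=2/(s-s_0+4)$, $\gamma_s=(s-s_0+4)/(6L)$, $p_s=1/2$, $T_s=T_{s_0}$, and the weights $\theta_t$ are given by \eqref{def_theta_acc_SVRG}, exactly as in Theorem~\ref{Them:main-deter-smooth}.

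First I would verify that the conditions \eqref{VRASGD_cond1}--\eqref{VRASGD_cond2} of Lemma~\ref{Lemma_VARSGD_iter} hold. Since $L_Q=L$ by the choice of $q_i$, we have $1+\mu\gamma_s-L\alpha_s\gamma_s = 2/3+\mu\gamma_s \ge 2/3$, and $p_s - L_Q\alpha_s\gamma_s/(1+\mu\gamma_s-L\alpha_s\gamma_s) = 1/2 - 1/[3(2/3+\mu\gamma_s)] \ge 0$. Because $\mu\ge 0$ and $V\ge 0$, I may replace the coefficient $(1+\mu\gamma_s)$ in front of $\bbe[V(x_t,x)]$ in \eqref{eq:lemma_iter} by $1$, obtaining exactly the inequality driving the proof of Lemma~\ref{lem:deter_smooth_one_epoch}. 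Summing over the inner iterations with the weights $\theta_t$ in \eqref{def_theta_acc_SVRG}, using the convexity of $\psi$, and telescoping over epochs yields relation \eqref{VRASGD_smooth}, provided that $w_s:={\cal L}_s-{\cal R}_{s+1}\ge 0$ for every $s$ from $1$ up to the current index.

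For $1\le s<s_0$ the identity $w_s=0$ is verbatim from Theorem~\ref{Them:main-deter-smooth}. For $s_0\le s$ inside the present range, setting $k:=s-s_0+4$ and substituting the explicit values gives, after cancellation, $w_s = 1/(12L) + (T_{s_0}-1)(2k-1)/(24L) \ge 0$; the boundary case $s=s_0$ (where $\alpha_{s_0}=1/2$ but $\alpha_{s_0+1}=2/5$) is a short numerical check using $T_{s_0}\ge m/2$. Once $w_s\ge 0$ throughout is in hand, I drop the nonnegative averaged-iterate term in \eqref{VRASGD_smooth} to get ${\cal L}_s\bbe[\psi(\tilde x^s)-\psi(x^*)]\le {\cal R}_1[\psi(x^0)-\psi(x^*)] + V(x^0,x^*) \le D_0/(3L)$, using ${\cal R}_1=2/(3L)$ and the definition \eqref{def_D_0} of $D_0$. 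The lower bound ${\cal L}_s\ge (s-s_0+4)^2 m/(48L)$, derived exactly as in \eqref{eq:smooth_L_reuse} from $T_{s_0}\ge m/2$, then yields the claimed $16D_0/[(s-s_0+4)^2 m]$. The only mild obstacle is the case-by-case arithmetic verification that $w_s\ge 0$ across the $s=s_0$ transition; after that, the argument is a mechanical restatement of the $\mu=0$ analysis.
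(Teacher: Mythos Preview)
Your proposal is correct and follows essentially the same route as the paper: observe that in this range the parameters coincide with the smooth-case settings, weaken the $(1+\mu\gamma_s)$ coefficient to $1$ in \eqref{eq:lemma_iter}, verify $w_s\ge 0$ exactly as in Theorem~\ref{Them:main-deter-smooth}, and finish with the lower bound \eqref{eq:smooth_L_reuse} on ${\cal L}_s$. The only organizational difference is that the paper telescopes from epoch $s_0$ (invoking the inequality \eqref{eq:key2} established in the $s\le s_0$ lemma to bound ${\cal L}_{s_0}\bbe[\psi(\tilde x^{s_0})-\psi(x)]+\bbe[V(x^{s_0},x)]\le D_0/(3L)$), whereas you telescope all the way from epoch $1$ and use ${\cal R}_1=2/(3L)$ directly; both arrive at the same $D_0/(3L)$ bound. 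One small remark: your ``boundary case $s=s_0$'' is not actually special, since $\alpha_{s_0}=2/4=1/2$ makes the formula $w_s=1/(12L)+(T_{s_0}-1)(2k-1)/(24L)$ apply there as well, and it is nonnegative simply because $T_{s_0}\ge 1$.
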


\begin{proof}
	In this case, $\tfrac{1}{2} \ge\tfrac{2}{s-s_0+4}\ge \sqrt{\tfrac{m \mu}{3L}}$.
	Therefore, we set $\theta_t$ as in \eqref{def_theta_acc_SVRG},  $\alpha_s=\tfrac{2}{s-s_0+4}, p_s = \tfrac{1}{2}$, $\gamma_s = \tfrac{1}{3L\alpha_s}$, and $T_s \equiv T_{s_0}$.
	Observe that
	the parameter setting in this case is the same as the smooth case in Theorem~\ref{Them:main-deter-smooth}.
	Hence, by following the same procedure as in the proof of Theorem \ref{Them:main-deter-smooth}, we can obtain
	\begin{align}\label{case3-reuse}
	{\cal L}_s \bbe[\psi(\tilde{x}^s) - \psi(x)] +\bbe[V(x^s,x)]
	&\le {\cal R}_{s_0+1} \bbe[\psi(\tilde{x}^{s_0}) - \psi(x)]
	+\bbe[V(x^{s_0},x)] \nn\\
	&\le {\cal L}_{s_0}\bbe[\psi(\tilde{x}^{s_0}) - \psi(x)]
	+\bbe[V(x^{s_0},x)]\nn\\
	& \le \tfrac{D_0}{3L},
	\end{align}
	where the last inequality follows from the fact that ${\cal L}_{s_0}\ge \tfrac{2T_{s_0}}{3L}$ and
	the relation in \eqref{eq:key2}. The result then follows by
	noting that ${\cal L}_s \ge \tfrac{(s-s_0+4)^2 m}{48 L}$ (see \eqref{eq:smooth_L_reuse}).
\end{proof}

\vgap

\begin{lemma}
	If  $s > \bar s_0 :=s_0+\sqrt{\tfrac{12L}{m \mu}}-4$ and $m < \tfrac{3L}{4\mu}$,
	then
	\begin{align}
	\bbe[\psi(\tilde x^s) - \psi(x^*)]
	&\leq \Big(1+\sqrt{\tfrac{\mu}{3mL}}\Big)^{\tfrac{-m(s-\bar s_0)}{2}}\tfrac{D_0}{3L/4\mu}, \label{eq:case4_unified}
	\end{align}
	where $x^*$ is an optimal solution of \eqref{cp}.
\end{lemma}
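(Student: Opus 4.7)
The plan is to adapt the ``multiply by $\Gamma_{t-1}$ and telescope'' strategy from the preceding lemma (case $s\ge s_0$, $m\ge 3L/(4\mu)$) to the present regime. When $s>\bar s_0$ and $m<3L/(4\mu)$, the parameters are constant in $s$: $\alpha_s\equiv\sqrt{m\mu/(3L)}<1/2$, $p_s=1/2$, $\gamma_s\equiv 1/(3L\alpha_s)$, $T_s\equiv T_{s_0}$, and $\beta:=1+\mu\gamma_s=1+\sqrt{\mu/(3mL)}$. The crucial structural difference is that $\alpha_s+p_s<1$ here, so the $\psi(\bar x_{t-1})$ term in Lemma~\ref{Lemma_VARSGD_iter} no longer vanishes and the two-piece weights from \eqref{def_theta_acc_SVRG_sc} are needed to telescope it.

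First I verify \eqref{VRASGD_cond1}--\eqref{VRASGD_cond2}: $1+\mu\gamma_s-L\alpha_s\gamma_s\ge 2/3$, and $p_s-L\alpha_s\gamma_s/(1+\mu\gamma_s-L\alpha_s\gamma_s)\ge 1/2-(1/3)/(2/3)=0$. Then I multiply the conclusion of Lemma~\ref{Lemma_VARSGD_iter} by $\Gamma_{t-1}=\beta^{t-1}$ and sum for $t=1,\ldots,T_s$. The $V(x_t,x^*)$ terms telescope to $\Gamma_{T_s}\bbe[V(x^s,x^*)]-\bbe[V(x^{s-1},x^*)]$. For the $\psi(\bar x_t)$ terms, the RHS coefficient $\gamma_s(1-\alpha_s-p_s)\Gamma_t/\alpha_s$ at iteration $t{+}1$ combines with the LHS coefficient $\gamma_s\Gamma_{t-1}/\alpha_s$ at iteration $t$ to produce exactly $\gamma_s\theta_t/\alpha_s$ for $t<T_s$ (matching \eqref{def_theta_acc_SVRG_sc}), while the top iteration supplies $\gamma_s\Gamma_{T_s-1}/\alpha_s$ and the $t=1$ boundary leaves an extra $\gamma_s(1-\alpha_s-p_s)\psi(\tilde x^{s-1})/\alpha_s$. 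Applying convexity of $\psi$ with $\tilde x^s=\tsum_t\theta_t\bar x_t/\tsum_t\theta_t$ then yields
\[\tfrac{\gamma_s\Theta_s}{\alpha_s}\bbe[\psi(\tilde x^s)-\psi(x^*)]+\Gamma_{T_s}\bbe[V(x^s,x^*)]\le\tfrac{\gamma_s\Phi_s}{\alpha_s}\bbe[\psi(\tilde x^{s-1})-\psi(x^*)]+\bbe[V(x^{s-1},x^*)],\]
with $\Theta_s:=\tsum_{t=1}^{T_s}\theta_t$ and $\Phi_s:=(1-\alpha_s-p_s)+p_s\tsum_{t=1}^{T_s}\Gamma_{t-1}$.

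The central step is extracting a per-epoch contraction of at least $\beta^{m/2}$ from this recursion. Treating $\mathcal{E}_s:=\tfrac{\gamma_s\Theta_s}{\alpha_s}\bbe[\psi(\tilde x^s)-\psi(x^*)]+\Gamma_{T_s}\bbe[V(x^s,x^*)]$ as a Lyapunov function, the contraction factor per epoch is $\min(\Theta_s/\Phi_s,\,\Gamma_{T_s})$. The bound $\Gamma_{T_s}=\beta^{T_{s_0}}\ge\beta^{m/2}$ is immediate since $T_{s_0}=2^{\lfloor\log_2 m\rfloor}\ge m/2$. The hard part is $\Theta_s/\Phi_s\ge\beta^{m/2}$: using the identity $m(\beta-1)=\alpha_s$ (inherent in the parameter choices) one obtains the closed forms $\Theta_s=(1/2-\alpha_s)+(1/2+\alpha_s)B$ and $\Phi_s=(1/2-\alpha_s)+B/2$ with $B:=(\beta^{T_s}-1)/(\beta-1)\ge m/2$, whence $\Theta_s-\Phi_s=\alpha_s B$. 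I plan to verify $\Theta_s/\Phi_s\ge\beta^{m/2}$ via the estimate $\beta^{m/2}\le e^{\alpha_s/2}\le 1+2\alpha_s$ (valid since $\alpha_s<1/2$), and to compare lower-order terms using $B\alpha_s\ge m\alpha_s/2$ as slack.

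Finally, I iterate $\mathcal{E}_s\le\beta^{-m/2}\mathcal{E}_{s-1}$ from $s=\bar s_0+1$ up to $s$. The base case invokes the preceding lemma (case 3): since $(\bar s_0-s_0+4)^2 m=12L/\mu$, the sublinear bound $16D_0/[(\bar s_0-s_0+4)^2 m]$ at $s=\bar s_0$ evaluates to exactly $4\mu D_0/(3L)=D_0/(3L/(4\mu))$, and equation \eqref{case3-reuse} also supplies $\bbe[V(x^{\bar s_0},x^*)]\le D_0/(3L)$. Substituting these into $\mathcal{E}_{\bar s_0}$, iterating by $\beta^{-m/2}$ per epoch, and using $\mathcal{E}_s\ge\tfrac{\gamma_s\Theta_s}{\alpha_s}\bbe[\psi(\tilde x^s)-\psi(x^*)]$ to isolate the function-value error, together with the estimate $m\mu\Gamma_{T_s}/\Theta_s=O(\alpha_s\mu)$ (so the $V_{\bar s_0}$ contribution to $\mathcal{E}_{\bar s_0}$ stays of the same order as the $\epsilon_{\bar s_0}$ contribution), produces the stated bound \eqref{eq:case4_unified}.
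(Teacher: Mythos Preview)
Your overall strategy matches the paper's: multiply \eqref{eq:lemma_iter} by $\Gamma_{t-1}$, sum over the inner loop to obtain an epoch recursion in $\Theta_s=\tsum_t\theta_t$ and $\Phi_s=(1-\alpha_s-p_s)+p_s\tsum_t\Gamma_{t-1}$, extract a geometric contraction per epoch, and initialize at $s=\bar s_0$ via Case~3. Two points of your execution differ from the paper, and the first creates a genuine gap.

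\textbf{The contraction step fails as written.} You plan to deduce $\Theta_s/\Phi_s\ge\beta^{m/2}$ from $\beta^{m/2}\le 1+2\alpha_s$, but your own closed forms give
\[
\Theta_s-(1+2\alpha_s)\Phi_s
=(1/2-\alpha_s)\bigl[1-(1+2\alpha_s)\bigr]+B\bigl[(1/2+\alpha_s)-(1+2\alpha_s)/2\bigr]
=-2\alpha_s(1/2-\alpha_s)<0,
\]
so $\Theta_s/\Phi_s<1+2\alpha_s$ for \emph{every} $B$; no amount of slack from $B\alpha_s\ge m\alpha_s/2$ rescues this. The easy repair is to use the sharper bound $\beta^{m/2}\le e^{\alpha_s/2}\le 1+\alpha_s$ (this is $e^x\le 1+2x$ with $x=\alpha_s/2$); then $\Theta_s-(1+\alpha_s)\Phi_s=\alpha_s\bigl[(B-1)/2+\alpha_s\bigr]\ge0$ since $B\ge1$. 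The paper bypasses this arithmetic altogether by proving the stronger relation $\Theta_s\ge\Gamma_{T_{s_0}}\Phi_s$: it writes $\Theta_s=(1-\alpha_s-p_s)\Gamma_{T_{s_0}}+[1-(1-\alpha_s-p_s)\beta]\,B$ and shows $1-(1-\alpha_s-p_s)\beta\ge p_s\Gamma_{T_{s_0}}$ via $(1+\delta)^T\le 1+2T\delta$ (valid since $T_{s_0}\mu\gamma_s\le\alpha_s<1/2$).

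\textbf{The Lyapunov choice costs you the constant.} The paper iterates $\tfrac{\gamma_s}{\alpha_s}\Phi_s\,\epsilon+V$, not your $\tfrac{\gamma_s}{\alpha_s}\Theta_s\,\epsilon+\Gamma_{T_s}V$. This matters because $\tfrac{\gamma_s}{\alpha_s}\Phi_s\ge\tfrac{\gamma_sp_sT_{s_0}}{\alpha_s}=\tfrac{T_{s_0}}{2m\mu}=\tfrac{(\bar s_0-s_0+4)^2T_{s_0}}{24L}$, which coincides with the lower bound on $\mathcal L_{\bar s_0}$ from \eqref{eq:smooth_L_reuse}; the paper can then plug the \emph{combined} bound $\mathcal L_{\bar s_0}\epsilon_{\bar s_0}+V_{\bar s_0}\le D_0/(3L)$ from \eqref{case3-reuse} without slack, landing exactly on $D_0/(3L/4\mu)$. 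Your route bounds $\epsilon_{\bar s_0}$ and $V_{\bar s_0}$ separately and carries the extra $\Gamma_{T_s}$ on the $V$ term; since $m\mu\Gamma_{T_s}/\Theta_s$ is of order $\mu$ (not $\alpha_s\mu$ as you wrote), you will overshoot the stated constant by an absolute factor of roughly~$3$. If only the $\mathcal O(\cdot)$ complexity matters you are fine; if you want the lemma's exact constant, switch to the paper's Lyapunov.
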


\begin{proof}
	In this case, $\tfrac{1}{2}\ge \sqrt{\tfrac{m \mu}{3L}}\ge\tfrac{2}{s-s_0+4}$.
	Therefore, we use constant step-size policy that $\alpha_s\equiv\sqrt{\tfrac{m \mu}{3L}}, p_s \equiv \tfrac{1}{2}$, $\gamma_s \equiv \tfrac{1}{3L\alpha_s}=\tfrac{1}{\sqrt{ 3m L \mu}}$, and $T_s \equiv T_{s_0}$.
	Also note that in this case $\theta_t$ are chosen as in \eqref{def_theta_acc_SVRG_sc}.
	Multiplying both sides of \eqnok{eq:lemma_iter} by $\Gamma_{t-1}=(1+\mu \gamma_s)^{t-1}$, we obtain
	\begin{align*}
	& \tfrac{\gamma_s}{\alpha_s} \Gamma_{t-1} \bbe[\psi(\bar x_t) - \psi(x)] +\Gamma_{t} \bbe[V(x_t,x)] \le  \tfrac{\Gamma_{t-1}\gamma_s}{\alpha_s} (1-\alpha_s - p_s) \bbe[\psi(\bar x_{t-1}) - \psi(x)] \notag \\
	&\quad+ \tfrac{\Gamma_{t-1}\gamma_s p_s}{\alpha_s} \bbe[\psi(\tilde x) - \psi(x)] +  \Gamma_{t-1} \bbe[V(x_{t-1},x)].
	\end{align*}
	Summing up the above inequality from $t = 1, \ldots, T_s$ and using the fact that $\bar x_0 = \tilde x$, we arrive at
	\begin{align*}
	& \tfrac{\gamma_s}{\alpha_s} \tsum_{t=1}^{T_s} \theta_t  \bbe[\psi(\bar x_t) - \psi(x)]
	+\Gamma_{T_s} \bbe[V(x_{T_s},x)]\\
	& \quad \le \tfrac{\gamma_s}{\alpha_s}  \left[ 1-\alpha_s - p_s +  p_s\tsum_{t=1}^{T_s} \Gamma_{t-1}\right] \bbe[\psi(\tilde x) - \psi(x)] + \bbe[V(x_{0},x)].
	\end{align*}
	Now using the facts that $x^s = x_{T_s}$, $x_0=x^{s-1}$, $\tilde{x}^s =\tsum_{t=1}^{T_s}(\theta_t \bar x_t)/\tsum_{t=1}^{T_s} \theta_t$, $\tilde x = \tilde x^{s-1}$, $T_s = T_{s_0}$
	and the convexity of $\psi$, we obtain
	\begin{align}
	&\tfrac{\gamma_s}{\alpha_s} \tsum_{t=1}^{T_{s_0}} \theta_t  \bbe[\psi(\tilde x^s) - \psi(x)]
	+\Gamma_{T_{s_0}}\bbe[V(x^s,x)] \notag \\
	& \quad \le \tfrac{\gamma_s}{\alpha_s}  \left[ 1-\alpha_s - p_s +  p_s\tsum_{t=1}^{T_{s_0}} \Gamma_{t-1}\right] \bbe[\psi(\tilde x^{s-1}) - \psi(x)] + \bbe[V(x^{s-1},x)] \label{eq:strong_convex_iter_key}
	\end{align}
	for any $s>\bar s_0$.
	Moreover, we have
	\begin{align*}
	\tsum_{t=1}^{T_{s_0}} \theta_t &= \Gamma_{T_{s_0}-1} + \tsum_{t=1}^{T_{s_0}-1} (\Gamma_{t-1} - (1 - \alpha_s - p_s) \Gamma_{t})\\
	&= \Gamma_{T_{s_0}}(1-\alpha_s - p_s) + \tsum_{t=1}^{T_{s_0}} (\Gamma_{t-1} - (1 - \alpha_s - p_s) \Gamma_{t})\\
	&= \Gamma_{T_{s_0}} (1-\alpha_s - p_s) + [1- (1-\alpha_s - p_s) (1+\mu \gamma_s)] \tsum_{t=1}^{T_{s_0}} \Gamma_{t-1}.
	\end{align*}
	Observe that for any $T > 1$ and $0 \le \delta T\le 1$,
	$(1+\delta)^T
	\le 1 + 2T \delta$,
	$\alpha_s = \sqrt{\tfrac{m\mu}{3L}}\ge \sqrt{\tfrac{T_{s_0}\mu}{3L}}$
	and hence that
	\begin{align*}
	1- (1-\alpha_s - p_s) (1+\mu \gamma_s)
	&\ge (1+\mu \gamma_s)(\alpha_s-\mu \gamma_s+ p_s) \\
	&\ge (1+\mu \gamma_s)(T_{s_0}\mu \gamma_s-\mu \gamma_s+ p_s) \\
	&= p_s (1+\mu \gamma_s) [2 (T_{s_0}-1) \mu \gamma_s + 1]\\
	&\ge p_s (1+\mu \gamma_s)^{T_{s_0}} = p_s \Gamma_{T_{s_0}}.
	\end{align*}
	Then we conclude that
	$
	\tsum_{t=1}^{T_{s_0}} \theta_t \ge \Gamma_{T_{s_0}} \left[ 1-\alpha_s - p_s + p_s  \tsum_{t=1}^{T_{s_0}} \Gamma_{t-1}\right].
	$
	Together with \eqref{eq:strong_convex_iter_key} and the fact that $\psi(\tilde x^s) - \psi(x^*) \ge 0$, we have
	\begin{align*}
	&\Gamma_{T_{s_0}} \left\{\tfrac{\gamma_s}{\alpha_s}  \left[ 1-\alpha_s - p_s +  p_s\tsum_{t=1}^{T_{s_0}} \Gamma_{t-1}\right] \bbe[\psi(\tilde x^s) - \psi(x^*)]
	+\bbe[V(x^s,x^*)] \right\} \\
	& \quad \le \tfrac{\gamma_s}{\alpha_s}  \left[ 1-\alpha_s - p_s +  p_s\tsum_{t=1}^{T_{s_
			0}} \Gamma_{t-1}\right] \bbe[\psi(\tilde x^{s-1}) - \psi(x^*)] + \bbe[V(x^{s-1},x^*)].
	\end{align*}
	Applying the above relation recursively for $s>\bar s_0=s_0+\sqrt{\tfrac{12L}{m \mu}}-4$, and also noting that $\Gamma_t= (1+\mu \gamma_s)^{t}$ and
	the constant step-size policy in this case, we obtain
	\begin{align*}
	&\tfrac{\gamma_s}{\alpha_s}  \left[ 1-\alpha_s - p_s +  p_s\tsum_{t=1}^{T_{s_0}} \Gamma_{t-1}\right] \bbe[\psi(\tilde x^s) - \psi(x^*)] +\bbe[V(x^s,x^*)]  \\
	& \le (1+\mu \gamma_s)^{-T_{s_0}(s-\bar s_0)}\left\{\tfrac{\gamma_s}{\alpha_s}  \left[ 1-\alpha_s - p_s +  p_s\tsum_{t=1}^{T_{s_0}} \Gamma_{t-1}\right] \right. \\
	& \quad \left. \bbe[\psi(\tilde x^{\bar s_0}) - \psi(x^*)] + \bbe[V(x^{\bar s_0},x^*)]\right\}.
	\end{align*}
	According to the parameter settings in this case, i.e.,  $\alpha_s\equiv\sqrt{\tfrac{m \mu}{3L}}, p_s \equiv \tfrac{1}{2}$, $\gamma_s \equiv \tfrac{1}{3L\alpha_s}=\tfrac{1}{\sqrt{ 3m L \mu}}$, and $\bar s_0=s_0+\sqrt{\tfrac{12L}{m \mu}}-4$, we have  $\tfrac{\gamma_{s}}{\alpha_{s}}  \left[ 1-\alpha_{s} - p_{s} +  p_{s}\tsum_{t=1}^{T_{s_0}} \Gamma_{t-1}\right]\geq \tfrac{\gamma_{s}p_{s}T_{s_0}}{\alpha_{s}}= \tfrac{T_{s_0}}{2m\mu}=\tfrac{(\bar s_0-s_0+4)^2T_{s_0}}{24L}$.
	Using this observation in the above inequality, we then conclude that
	\begin{align}
	& \bbe[\psi(\tilde x^s) - \psi(x^*)]
	\le (1+\mu \gamma_s)^{-T_{s_0}(s-\bar s_0)} \left[\bbe[\psi(\tilde x^{\bar s_0}) - \psi(x^*)] + \tfrac{24L}{(\bar s_0-s_0+4)^2T_{s_0}} \bbe[V(x^{\bar s_0},x^*)] \right]\nn\\
	&\le (1+\mu \gamma_s)^{-T_{s_0}(s-\bar s_0)}\tfrac{24L}{(\bar s_0-s_0+4)^2T_{s_0}}\left[\mathcal{L}_{\bar s_0}\bbe[\psi(\tilde x^{\bar s_0}) - \psi(x^*)] + \bbe[V(x^{\bar s_0},x^*)] \right] \nn \\
	&\le (1+\mu \gamma_s)^{-T_{s_0}(s-\bar s_0)}\tfrac{24L}{(\bar s_0-s_0+4)^2T_{s_0}}\tfrac{D_0}{3L}\nn\\
	&\le (1+\mu \gamma_s)^{-T_{s_0}(s-\bar s_0)}\tfrac{16D_0}{(\bar s_0-s_0+4)^2m}\nn\\
	&= (1+\mu \gamma_s)^{-T_{s_0}(s-\bar s_0)}\tfrac{D_0}{3L/4\mu}\nn,
	\end{align}
	where the second inequality follows from the fact that $\mathcal{L}_{\bar s_0} \ge \tfrac{(\bar s_0-s_0+4)^2T_{s_0}}{24L}= \tfrac{T_{s_0}}{2m\mu}$ due to \eqref{eq:smooth_L_reuse},
	the third inequality follows from \eqref{case3-reuse} in Case 3,
	and last inequality follows from $T_{s_0} = 2^{\lfloor \log_2 m \rfloor} \ge m /2$.
\end{proof}

\vgap

Putting the above four technical results together, we are ready to prove Theorem~\ref{Them:main-deter-sc} for \algone solving \eqref{cp} when \eqref{cp} is possibly strongly convex.

\begin{proofof}{Theorem \ref{Them:main-deter-sc}}
	Suppose that the probabilities $q_i$'s are set to $L_i / \tsum_{i=1}^m L_i$ for $i = 1, \ldots, m$.
	Moreover, let us denote $s_0 := \lfloor \log m\rfloor+1$ and assume that the
	weights $\{\theta_t\}$ are set to \eqref{def_theta_acc_SVRG} if $1\le s \le s_0$ or $s_0< s \le s_0+\sqrt{\tfrac{12L}{m \mu}}-4, \ m < \tfrac{3L}{4\mu}$.
	Otherwise, they are set to \eqnok{def_theta_acc_SVRG_sc}.
	If the parameters $\{T_s\}$, $\{\gamma_s\}$ and $\{p_s\}$ set  to \eqref{parameter-deter-smooth1} with $\{\alpha_s\}$
	given by \eqnok{parameter-deter-alpha-unified}, then
	we have
	
	\beq \label{results:acc_SVRG_unified}
	\bbe[\psi(\tilde{x}^s) - \psi(x^*)] \le
	\begin{cases}
		2^{-(s+1)} D_0,  & 1\le s \le s_0,\\
		\left(\tfrac{4}{5}\right)^{s}D_0 , & s>s_0, \mathrm{~and~} m \ge \tfrac{3L}{4\mu},\\
		\tfrac{16 D_0}{(s-s_0+4)^2 m} , & s_0< s \le s_0+\sqrt{\tfrac{12L}{m \mu}}-4 \mathrm{~and~} m < \tfrac{3L}{4\mu},\\
		\big(1+\sqrt{\tfrac{\mu}{3mL}}\big)^{\tfrac{-m(s-\bar s_0)}{2}}\tfrac{D_0}{3L/4\mu} , &  s_0+\sqrt{\tfrac{12L}{m \mu}}-4 =\bar s_0< s \mathrm{~and~} m< \tfrac{3L}{4\mu},
	\end{cases}
	\eeq
	where $x^*$ is an optimal solution of \eqref{cp} and $D_0$ is defined as in \eqref{def_D_0}.
	
	Now we are ready to provide the proof for the complexity results presented in Theorem~\ref{Them:main-deter-sc}.
	Firstly, it is clear that the first case and the third case corresponds to the results of the smooth case discussed in Theorem~\ref{Them:main-deter-smooth}.
	As a consequence, the total number of gradient evaluations can also be bounded by \eqref{eq:smooth_sfo_reuse1} and \eqref{eq:smooth_sfo_reuse2}, respectively.
	Secondly, for the second case of \eqref{results:acc_SVRG_unified}, it is easy to check that \algone needs to run at most $S:={\cal O}\{\log D_0/\epsilon\}$ epochs, and hence the total number of gradient evaluations can be bounded by
	\begin{align}
	m S + \tsum_{s=1}^{S} T_s
	\leq 2m S
	= {\cal O}\left\{  m \log \tfrac{D_0}{\epsilon} \right\}. \label{eq:unified_sfo_reuse2}
	\end{align}
	Finally, let us consider the last case of \eqref{results:acc_SVRG_unified}. Since \algone only needs to run at most $S'=\bar s_0 + 2\sqrt{\tfrac{3 L}{m\mu}}\log \tfrac{D_0/\epsilon}{3L/4\mu}
	$ epochs in this case, the total number of gradient evaluations can be bounded by
	\begin{align}
	\sum_{s=1}^{S'}(m+T_s)
	&=\sum_{s=1}^{s_0}(m+T_s)+ \sum_{s=s_0+1}^{\bar s_0}(m+T_{s_0}) + (m+T_{s_0})(S'-\bar s_0) \notag \\
	&\leq 2m\log m +2m(\sqrt{\tfrac{12L}{m \mu}}-4) + 4m\sqrt{\tfrac{3 L}{m\mu}}\log \tfrac{D_0/\epsilon}{3L/4\mu} \notag\\
	&= {\cal O} \left\{m\log m +\sqrt{\tfrac{m L}{\mu}}\log \tfrac{D_0/\epsilon}{3L/4\mu} \right\},\label{eq:unified_sfo_reuse4}
	\end{align}
	Therefore, the results of Theorem \ref{Them:main-deter-sc} follows immediately from the above discussion.
\end{proofof}

\subsection{Convex finite-sum optimization under error bound}\label{sec:error_bound_proof}
In this section, we consider a class of convex finite-sum optimization problems that satisfies the error bound condition described in \eqref{cond:eb}, and establish the convergence results for applying \algone to solve it.

\begin{proofof}{Theorem~\ref{Them:main-deter-error}}
	Similar to the smooth case, according to \eqref{VRASGD_smooth}, for any $x \in X$, we have
	\begin{align*}
	{\cal L}_s \bbe[\psi(\tilde{x}^s) - \psi(x)] &\le {\cal R}_1 \bbe[\psi(\tilde{x}^{0}) - \psi(x)]
	+\bbe[V(x^{0},x) - V(x^s,x)]\\
	&\le {\cal R}_1 [\psi(x^{0}) - \psi(x)] + V(x^{0},x).
	\end{align*}
	Then we use $x^*$ to replace $x$ and use the relation of \eqref{cond:eb} to obtain
	\begin{align*}
	{\cal L}_s \bbe[\psi(\tilde{x}^s) - \psi(x^*)]
	&\le {\cal R}_1 [\psi(x^{0}) - \psi(x^*)] + \tfrac{1}{u}[\psi(x)-\psi(x^*)].
	\end{align*}
	Now, we compute ${\cal L}_s$ and ${\cal R}_1$.
	According to \eqref{eq:smooth_L_reuse}, we have ${\cal L}_s \geq \tfrac{(s-s_0+4)^2(T_{s_0} +1)}{24 L}$. We have ${\cal R}_1=\tfrac{2T_1}{3L}$ by plugging the parameters $\gamma_1$, $p_1$, $\alpha_1$ and $T_1$ into \eqref{def_LHs}.
	
	Thus, we prove \eqref{results:acc_SVRG_error} as follows (recall that $s_0=4$ and $s=s_0+4\sqrt{\tfrac{L}{\bar{\mu}m}}$):
	\begin{align*}
	\bbe[\psi(\tilde{x}^s) - \psi(x^*)] &\le
	\tfrac{16T_1+24L/\bar{\mu}}{(s-s_0+4)^2T_12^{s_0-1}} [\psi({x}^0) - \psi(x^*)] \\
	&\le \tfrac{16+24L/(\bar{\mu}T_1)}{(s-s_0+4)^22^{s_0-1}} [\psi({x}^0) - \psi(x^*)]\\
	&\le \tfrac{5}{16}\tfrac{L/(\bar{\mu}T_1)}{1+L/(\bar{\mu}m)} [\psi({x}^0) - \psi(x^*)] \\
	&\le \tfrac{5}{16} [\psi({x}^0) - \psi(x^*)],
	\end{align*}
	where the last inequality follows from $T_1=\min\{m,\tfrac{L}{\bar{\mu}}\}$.
	
	Finally, we plug $k=\log\tfrac{\psi({x}^0) - \psi(x^*)}{\epsilon}, s_0=4, s=s_0+4\sqrt{\tfrac{L}{\bar{\mu}m}}$ and $T_1=\min\{m,\tfrac{L}{\bar{\mu}}\}$  to prove \eqref{results:acc_SVRG_error_sfo}:
	\beq
	\bar N :=k(\tsum_s(m+T_s))\leq k(ms+T_1 2^{s_0}(s-s_0+1))=
	{\cal O} \big(m+\sqrt{\tfrac{m L}{\bar{\mu}}}\big)\log \tfrac{\psi({x}^0) - \psi(x^*)}{\epsilon}. \nn
	\eeq
\end{proofof}
\setcounter{equation}{0}

\section{\algone for stochastic finite-sum optimization}\label{sec:alg-stochastic}
In this section, we consider the stochastic finite-sum optimization and online learning problems,
where only noisy gradient information of $f_i$ can be accessed via the SFO oracle,
and provide the proof of Theorem~\ref{Them:main-sto-smooth}.


Before proving Theorem \ref{Them:main-sto-smooth}, we need to establish some key technical results in the following lemmas.
First, we rewrite Lemma \ref{variance_reduced_betterEsti} under the stochastic setting.
Lemma~\ref{variance_reduced_betterEsti_sto} below shows that $G_t$ updated according to Algorithm~\ref{algVRASGD_sto} is an unbiased estimator of $\nabla f(\underline x_t)$ and its variance is upper bounded.

\begin{lemma}\label{variance_reduced_betterEsti_sto}
	Conditionally on~$x_1, \ldots, x_{t}$,
	\begin{align}
	\bbe [\delta_t] &=0, \label{SVRG_bnd1_sto}\\
	\bbe[\dnorm{\delta_t}^2] &\le 2 L_Q[f(\tilde x) - f(\underline x_{t}) - \langle \nabla f(\underline x_{t}), \tilde x - \underline x_{t}\rangle]
	+\tsum_{i=1}^m \tfrac{\sigma^2}{q_i m^2 b_s} +\tsum_{i=1}^m \tfrac{2\sigma^2}{q_i m^2 B_s}+\tfrac{2\sigma^2}{m B_s}, \label{SVRG_bnd2_sto}
	\end{align}
	where $\delta_t = G_t - \nabla f (\underline x_t)$ and
	$G_t = \tfrac{1}{q_{i_t} m b_s}\tsum_{k=1}^{b_s}\big(G_{i_t}(\underline x_{t},\xi_{k}^s)
	- G_{i_t}(\tilde{x})\big)
	+ \tilde{g}$
	(see Line~\ref{eqn:estgradient_sto} of Algorithm \ref{algVRASGD_sto}).
\end{lemma}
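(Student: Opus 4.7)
The plan is to mirror the structure of the deterministic proof of Lemma~\ref{variance_reduced_betterEsti}, isolating a ``deterministic-style'' error term on top of which we overlay the sampling noise contributions. First, I would verify unbiasedness by direct computation: taking the expectation with respect to the fresh inner samples $\xi_k^s$, then with respect to $i_t$, and finally with respect to the outer batch samples $\xi_j^s$ used to form $\tilde g$, each stage just applies \eqref{eq:unbiased} together with the identity $\bbe[(mq_{i_t})^{-1}\nabla f_{i_t}(\cdot)] = \nabla f(\cdot)$ that already appeared in the deterministic proof.

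Next, for the variance bound, I would decompose
\begin{equation*}
\delta_t = \hat\delta_t + \eta_1 + (\eta_3 - \eta_2),
\end{equation*}
where $\hat\delta_t := (q_{i_t}m)^{-1}(\nabla f_{i_t}(\underline x_t) - \nabla f_{i_t}(\tilde x)) + \nabla f(\tilde x) - \nabla f(\underline x_t)$ is the error of the deterministic estimator; $\eta_1 := (q_{i_t}m b_s)^{-1}\sum_{k=1}^{b_s}(G_{i_t}(\underline x_t,\xi_k^s) - \nabla f_{i_t}(\underline x_t))$ captures the inner-batch noise; $\eta_2 := (q_{i_t}m)^{-1}(G_{i_t}(\tilde x) - \nabla f_{i_t}(\tilde x))$; and $\eta_3 := \tilde g - \nabla f(\tilde x)$. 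The critical observation is the conditional independence structure: the inner samples $\{\xi_k^s\}$ are fresh and therefore independent of $i_t$ and of the outer batch, while the outer batch is independent of $i_t$. Hence, after conditioning suitably and taking iterated expectations, all cross terms involving $\eta_1$ vanish, and the cross term between $\hat\delta_t$ and $\eta_3 - \eta_2$ vanishes once we average over $i_t$ (since $\bbe_{i_t}[\hat\delta_t] = 0$, exactly as in Lemma~\ref{variance_reduced_betterEsti}). This delivers
\begin{equation*}
\bbe[\dnorms{\delta_t}] \;\le\; \bbe[\dnorms{\hat\delta_t}] \;+\; \bbe[\dnorms{\eta_1}] \;+\; \bbe[\dnorms{\eta_3 - \eta_2}].
\end{equation*}

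The first summand is already bounded by $2L_Q[f(\tilde x) - f(\underline x_t) - \langle \nabla f(\underline x_t),\tilde x - \underline x_t\rangle]$ by invoking Lemma~\ref{variance_reduced_betterEsti} directly. For $\eta_1$, I would condition on $i_t$, use the iid mean-zero property of $\{G_{i_t}(\underline x_t,\xi_k^s) - \nabla f_{i_t}(\underline x_t)\}_k$ together with \eqref{eq:bounded} to obtain $\bbe[\dnorms{\eta_1}\mid i_t] \le \sigma^2/(q_{i_t}^2 m^2 b_s)$, and then average over $i_t \sim Q$ to get $\sum_{i=1}^m \sigma^2/(q_i m^2 b_s)$. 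For the last summand I would apply $\dnorms{\eta_3 - \eta_2} \le 2\dnorms{\eta_2} + 2\dnorms{\eta_3}$ and bound each piece analogously: for $\eta_2$, conditioning on $i_t$ and using independence of the $B_s$ samples yields $\bbe[\dnorms{\eta_2}\mid i_t] \le \sigma^2/(q_{i_t}^2 m^2 B_s)$, which averages to $\sum_i \sigma^2/(q_i m^2 B_s)$; for $\eta_3$, independence of the samples across both $i$ and $j$ gives $\bbe[\dnorms{\eta_3}] \le \sigma^2/(mB_s)$. Multiplying by the factor of $2$ recovers exactly the three noise terms in \eqref{SVRG_bnd2_sto}.

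The main subtlety I anticipate is bookkeeping the conditional independence correctly so that the cross terms really do vanish, since $\hat\delta_t$, $\eta_2$ both depend on $i_t$, while $\eta_2$ and $\eta_3$ share the outer batch samples. A clean way to handle this is a tower-of-expectations argument: condition first on $i_t$ and on the outer batch, then on just the outer batch, then unconditionally. A second minor issue is that the inequality $\bbe[\dnorms{\sum_k X_k}] = \sum_k \bbe[\dnorms{X_k}]$ for iid mean-zero vectors is exact in Hilbert space but only holds up to a constant in a general dual norm; the analysis here implicitly relies on the former, consistent with the paper's convention when invoking the analogous identity already present in the deterministic Lemma~\ref{variance_reduced_betterEsti}.
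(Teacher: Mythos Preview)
Your approach is essentially identical to the paper's: the same three-term decomposition $\delta_t = \hat\delta_t + \eta_1 + (\eta_3-\eta_2)$, the same orthogonality argument to drop the cross terms, and the same bounds on each piece (including the $2\dnorms{\eta_2}+2\dnorms{\eta_3}$ split). One small correction: the cross term $\bbe[\langle \hat\delta_t,\eta_3-\eta_2\rangle]$ vanishes because $\bbe[\eta_3-\eta_2\mid i_t]=0$ (the outer batch is unbiased and independent of $i_t$), not because $\bbe_{i_t}[\hat\delta_t]=0$ --- your stated reason fails since $\eta_2$ also depends on $i_t$; your proposed tower should condition on $i_t$ (not on the outer batch) at the relevant step.
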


\begin{proof}
	Take the expectation with respect to $i_t$ and $[\xi]:=\{\xi_k\}_{k=1}^{b_s}$ conditionally on~$x_1, \ldots, x_{t}$, we obtain
	\begin{align*}
	&\bbe_{i_t, [\xi]} \Big[\frac{1}{m q_{i_t}b_s}\sum_{k=1}^{b_s}G_{i_t}(\underline x_{t},\xi_{k})-\frac{1}{m q_{i_t}}G_{i_t}(\tilde{x})
	+\frac{1}{m}\sum_{i=1}^{m}G_i(\tilde{x})
	-\nabla f(\underline x_{t}) \Big] \\
	&=\bbe_{i_t} \Big[\frac{1}{m q_{i_t}}\nabla f_{i_t}(\underline x_{t})-\frac{1}{m q_{i_t}}G_{i_t}(\tilde{x})
	+\frac{1}{m}\sum_{i=1}^{m}G_i(\tilde{x})
	-\nabla f(\underline x_{t})\Big]\\
	&= 0,
	\end{align*}
	where the first equality follows from \eqref{eq:unbiased}.
	
	Moreover, we have
	\begin{align}
	\bbe [\dnorm{\delta_t}^2]
	=& \bbe \Big[\big\|
	\frac{1}{m q_{i_t}b_s}\sum_{k=1}^{b_s}G_{i_t}(\underline x_{t},\xi_{k})
	-\frac{1}{m q_{i_t}}G_{i_t}(\tilde{x})
	+\frac{1}{m}\sum_{i=1}^{m}G_i(\tilde{x})
	-\nabla f(\underline x_{t})\big\|_*^2 \Big] \notag\\
	=& \bbe \Big[\big\|
	\frac{1}{m q_{i_t}}\big(\nabla f_{i_t} (\underline x_{t})-\nabla f_{i_t}(\tilde{x})\big)
	+\nabla f(\tilde{x}) - \nabla f(\underline x_{t})\big\|_*^2 \Big]\notag\\
	&\qquad +
	\bbe \Big[\big\|
	\frac{1}{m q_{i_t}b_s}\sum_{k=1}^{b_s}G_{i_t}(\underline x_{t},\xi_{k})
	-\frac{1}{m q_{i_t}}\nabla f_{i_t} (\underline x_{t})
	\big\|_*^2 \Big] \notag\\
	&\qquad +
	\bbe \Big[\big\|
	\frac{1}{m q_{i_t}}\nabla f_{i_t} (\tilde{x})
	-\frac{1}{m q_{i_t}}G_{i_t}(\tilde{x})
	+\frac{1}{m}\sum_{i=1}^{m}G_i(\tilde{x})
	-\frac{1}{m}\sum_{i=1}^{m}\nabla f_i(\tilde{x})
	\big\|_*^2 \Big] \notag\\
	\leq& \bbe \Big[\frac{1}{m^2 q_{i_t}^2}\big\|
	\nabla f_{i_t} (\underline x_{t})-\nabla f_{i_t}(\tilde{x})
	\big\|_*^2 \Big]
	+\sum_{i=1}^m \frac{\sigma^2}{q_i m^2 b_s}\notag\\
	&\qquad +
	2\bbe \Big[\big\|
	\frac{1}{m q_{i_t}}\nabla f_{i_t} (\tilde{x})
	-\frac{1}{m q_{i_t}}G_{i_t}(\tilde{x})\big\|_*^2 \Big]
	+2\bbe \Big[\big\|
	\frac{1}{m}\sum_{i=1}^{m}G_i(\tilde{x})
	-\frac{1}{m}\sum_{i=1}^{m}\nabla f_i(\tilde{x})
	\big\|_*^2 \Big] \notag\\
	\leq& \bbe \Big[\frac{1}{m^2 q_{i_t}^2}\big\|
	\nabla f_{i_t} (\underline x_{t})-\nabla f_{i_t}(\tilde{x})
	\big\|_*^2 \Big]
	+\sum_{i=1}^m \frac{\sigma^2}{q_i m^2 b_s}
	+ \sum_{i=1}^m \frac{2\sigma^2}{q_i m^2 B_s}
	+\frac{2\sigma^2}{m B_s}, \notag
	\end{align}
	where the last inequality uses \eqref{eq:bounded} and in view of relation~\eqref{SVRG_bnd0} (with $x$ and $x^*$ replaced by $\tilde x$ and $\underline x_{t}$), then implies \eqref{SVRG_bnd2_sto}.
\end{proof}

We are now ready to rewrite Lemma~\ref{Lemma_VARSGD_iter} under the stochastic setting.
\begin{lemma} \label{Lemma_VARSGD_iter_sto}
	Assume that $\alpha_s \in [0,1]$, $p_s \in [0,1]$ and $\gamma_s > 0$ satisfy \eqref{VRASGD_cond1} and \eqref{VRASGD_cond2}.
	Then, conditional on $x_1, \ldots, x_{t-1}$, we have
	\begin{align} \label{eq:lemma_iter_sto}
	\bbe[\psi(\bar x_t)+ \tfrac{\alpha_s(1+\mu \gamma_s)}{\gamma_s}V(x_t,x)] &\le (1-\alpha_s - p_s) \psi(\bar x_{t-1}) + p_s \psi(\tilde x) + \alpha_s \psi(x)
	+ \tfrac{\alpha_s}{\gamma_s}V(x_{t-1},x) \notag\\
	&\qquad +\tfrac{\alpha_s \gamma_s}{2(1+\mu \gamma_s - L \alpha_s \gamma_s)}
	\big(\tsum_{i=1}^m \tfrac{\sigma^2}{q_i m^2 b_s}
	+ \tsum_{i=1}^m \tfrac{2\sigma^2}{q_i m^2 B_s}
	+\tfrac{2\sigma^2}{m B_s}\big)
	\end{align}
	for any $x \in X$.
\end{lemma}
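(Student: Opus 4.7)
The plan is to mimic the proof of Lemma~\ref{Lemma_VARSGD_iter} line by line, replacing Lemma~\ref{variance_reduced_betterEsti} with its stochastic counterpart Lemma~\ref{variance_reduced_betterEsti_sto} in the one step where the variance of $\delta_t$ is used. All the structural parts of the deterministic proof (smoothness of $f$, the identity $\bar x_t - \underline x_t = \alpha_s(x_t - x_{t-1}^+)$, Lemma~\ref{lem_VRASGD_opt_cond} on the prox-mapping, and Young's inequality applied with the coefficient coming from \eqref{VRASGD_cond1}) do not depend on how $G_t$ is formed; they apply verbatim once we simply set $\delta_t := G_t - \nabla f(\underline x_t)$ for the stochastic $G_t$ in \eqref{eqn:estgradient_sto}.

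Concretely, I would first derive, without taking any expectation, the same deterministic identity as in \eqref{rel1}:
\begin{align*}
f(\bar x_t) &\le (1-\alpha_s - p_s) f(\bar x_{t-1}) + \alpha_s\!\left[\psi(x) - h(x_t) + \tfrac{1}{\gamma_s}V(x_{t-1},x) - \tfrac{1+\mu\gamma_s}{\gamma_s}V(x_t,x)\right] \\
&\quad + p_s\, l_f(\underline x_t, \tilde x) + \tfrac{\alpha_s \gamma_s \|\delta_t\|_*^2}{2(1+\mu\gamma_s - L\alpha_s\gamma_s)} - \alpha_s \langle \delta_t, x_{t-1}^+ - x\rangle,
\end{align*}
by using smoothness of $f$, the convex combination defining $\bar x_t$, the proximal inequality from Lemma~\ref{lem_VRASGD_opt_cond} applied to $x_t$, convexity of $f$, and the Young estimate $-\alpha_s\langle \delta_t, x_t - x_{t-1}^+\rangle \le \tfrac{\alpha_s\gamma_s \|\delta_t\|_*^2}{2(1+\mu\gamma_s - L\alpha_s\gamma_s)} + \tfrac{\alpha_s(1+\mu\gamma_s - L\alpha_s\gamma_s)}{2\gamma_s}\|x_t - x_{t-1}^+\|^2$, with the latter quadratic absorbed by the analogous negative term generated by Lemma~\ref{lem_VRASGD_opt_cond}.

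Next, I would take the conditional expectation given $x_1,\ldots,x_{t-1}$. Since $\underline x_t$ and hence $x_{t-1}^+$ are measurable with respect to this $\sigma$-algebra (they depend only on $\bar x_{t-1}$, $x_{t-1}$ and the epoch's $\tilde x$), the unbiasedness statement \eqref{SVRG_bnd1_sto} gives $\bbe\langle \delta_t, x_{t-1}^+ - x\rangle = 0$. For the variance term I plug in \eqref{SVRG_bnd2_sto}: the first piece $2L_Q[f(\tilde x) - f(\underline x_t) - \langle \nabla f(\underline x_t), \tilde x - \underline x_t\rangle]$ combines with $p_s\, l_f(\underline x_t,\tilde x)$ and is bounded above by $p_s f(\tilde x)$ exactly as in the deterministic proof, because condition \eqref{VRASGD_cond2} is precisely $p_s - \tfrac{L_Q \alpha_s \gamma_s}{1+\mu\gamma_s - L\alpha_s\gamma_s} \ge 0$. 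The three remaining additive $\sigma^2$ terms in \eqref{SVRG_bnd2_sto} do not depend on any iterate, so multiplying them by the prefactor $\tfrac{\alpha_s\gamma_s}{2(1+\mu\gamma_s - L\alpha_s\gamma_s)}$ yields exactly the residual term on the last line of \eqref{eq:lemma_iter_sto}.

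Finally, adding $\alpha_s h(\bar x_t) \le (1-\alpha_s - p_s) h(\bar x_{t-1}) + \alpha_s h(x_t) + p_s h(\tilde x)$ via convexity of $h$ converts $f$ into $\psi$ on the left-hand side and on the $\bar x_{t-1}$, $\tilde x$ terms, and rearranging gives \eqref{eq:lemma_iter_sto}. I expect no real obstacle: the only place the stochastic setting intervenes is in the single variance estimate, and the extra constants slot into the bound additively because the expectation of the linear-in-$\delta_t$ term still vanishes. The one thing to be mindful of is the filtration convention, i.e., verifying that $\underline x_t$ and $x_{t-1}^+$ are independent of the fresh samples $i_t$ and $\{\xi_k^s\}_{k=1}^{b_s}$ used to build $G_t$; this is immediate from the algorithmic ordering in Algorithm~\ref{algVRASGD_sto}.
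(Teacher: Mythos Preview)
Your proposal is correct and follows the paper's own proof essentially verbatim: recall the deterministic bound \eqref{rel1}, take conditional expectation, replace the variance estimate \eqref{SVRG_bnd2} by its stochastic analogue \eqref{SVRG_bnd2_sto}, absorb the $l_f(\underline x_t,\tilde x)$ piece via \eqref{VRASGD_cond2}, and finish with convexity of $h$. One minor slip: in your last step the convexity inequality should read $h(\bar x_t) \le (1-\alpha_s-p_s)h(\bar x_{t-1}) + \alpha_s h(x_t) + p_s h(\tilde x)$, without the stray $\alpha_s$ on the left.
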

\begin{proof}
	Similar to the proof of Lemma~\ref{Lemma_VARSGD_iter}, in view of the smoothness and (strong) convexity of $f$, we recall the result in \eqref{rel1}, i.e.,
	\begin{align}
	f(\bar x_t)
	&\le  (1-\alpha_s - p_s) f(\bar x_{t-1})
	+ \alpha_s \left[\psi(x) - h(x_t) + \tfrac{1}{\gamma_s} V(x_{t-1},x) - \tfrac{1+\mu \gamma_s}{\gamma_s} V(x_t,x) \right]\nn \\
	&\quad +  p_s l_f(\underline x_t, \tilde x) + \tfrac{\alpha_s \gamma_s\|\delta_t\|_*^2}{2(1+\mu \gamma_s - L \alpha_s \gamma_s)}
	- \alpha_s \langle \delta_t,x_{t-1}^+ - x\rangle. \label{rel2}
	\end{align}
	Also note that by \eqnok{SVRG_bnd1_sto}, \eqnok{SVRG_bnd2_sto}, \eqnok{VRASGD_cond2} and the convexity of $f$, we have, conditional on $x_1, \ldots, x_{t-1}$,
	\begin{align*}
	& p_s l_f(\underline x_t, \tilde x) + \tfrac{\alpha_s \gamma_s\bbe[\|\delta_t\|_*^2]}{2(1+\mu \gamma_s - L \alpha_s \gamma_s)} + \alpha_s \bbe[\langle \delta_t,x_{t-1}^+ - x\rangle]\\
	& \le  p_s l_f(\underline x_t, \tilde x) + \tfrac{L_Q\alpha_s \gamma_s}{1+\mu \gamma_s - L \alpha_s \gamma_s} [f(\tilde x) - l_f(\underline x_t, \tilde x) ]\\
	&\quad +\tfrac{\alpha_s \gamma_s}{2(1+\mu \gamma_s - L \alpha_s \gamma_s)}\big(\tsum_{i=1}^m \tfrac{\sigma^2}{q_i m^2 b_s}
	+ \tsum_{i=1}^m \tfrac{2\sigma^2}{q_i m^2 B_s}
	+\tfrac{2\sigma^2}{m B_s}\big)\\
	&\le \left(p_s -  \tfrac{L_Q \alpha_s \gamma_s}{1+\mu \gamma_s - L \alpha_s \gamma_s}\right) l_f(\underline x_t, \tilde x)
	+  \tfrac{L_Q \alpha_s \gamma_s}{1+\mu \gamma_s - L \alpha_s \gamma_s}f(\tilde x)\\
	&\quad +\tfrac{\alpha_s \gamma_s}{2(1+\mu \gamma_s - L \alpha_s \gamma_s)}\big(\tsum_{i=1}^m \tfrac{\sigma^2}{q_i m^2 b_s}
	+ \tsum_{i=1}^m \tfrac{2\sigma^2}{q_i m^2 B_s}
	+\tfrac{2\sigma^2}{m B_s}\big)\\
	&\le p_s f(\tilde x)+\tfrac{\alpha_s \gamma_s}{2(1+\mu \gamma_s - L \alpha_s \gamma_s)}\big(\tsum_{i=1}^m \tfrac{\sigma^2}{q_i m^2 b_s}
	+ \tsum_{i=1}^m \tfrac{2\sigma^2}{q_i m^2 B_s}
	+\tfrac{2\sigma^2}{m B_s}\big).
	\end{align*}
	Moreover, by convexity of $h$, we have $h(\bar x_t) \le (1-\alpha_s - p_s) h(\bar x_{t-1}) + \alpha_s h(x_t) + p_s h(\tilde x)$.
	The result then follows by summing up the previous two conclusions with \eqref{rel2}.
\end{proof}

Finally, we need to rewrite the stochastic counterpart of the decrease of function value in each epoch (Lemma \ref{lem:deter_smooth_one_epoch}) in the following lemma.
\begin{lemma}\label{lem:sto_smooth_one_epoch}
	Assume that for each epoch $s$, $s \ge 1$, we have $\alpha_s$, $\gamma_s$, $p_s$ and $T_s$ such that \eqnok{VRASGD_cond1}-\eqnok{VRASGD_cond2} hold. Also, let us set
	$\theta_t$ as \eqref{def_theta_acc_SVRG}.
	Moreover, let ${\cal L}_s$, ${\cal R}_s$ and $w_s$ defined as in \eqref{def_LHs} and \eqref{def_ws} respectively.
	Then we have
	\begin{align}
	{\cal L}_s \bbe[\psi(\tilde{x}^s) - \psi(x)] &+ (\tsum_{j=1}^{s-1} w_j)
	\bbe[\psi(\bar x^s) - \psi(x)] \nn\\
	& \le {\cal R}_1 \bbe[\psi(\tilde{x}^{0}) - \psi(x)]
	+\bbe[V(x^{0},x) - V(x^s,x)] \notag \\
	&\quad +\tsum_{j=1}^{s}\tfrac{\gamma_{j}^2T_{j}}{2(1+\mu \gamma_{j} - L \alpha_{j} \gamma_{j})}\big(\tsum_{i=1}^m \tfrac{\sigma^2}{q_i m^2 b_j}
	+ \tsum_{i=1}^m \tfrac{2\sigma^2}{q_i m^2 B_j}
	+\tfrac{2\sigma^2}{m B_j}\big)  \label{VRASGD_smooth_sto}
	\end{align}
	for any $x \in X$, where $\bar x^s$ is defined as in \eqref{def_w_output}.
\end{lemma}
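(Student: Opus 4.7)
\begin{proofof}{Lemma~\ref{lem:sto_smooth_one_epoch} (proof sketch)}
My plan is to mirror the argument used for its deterministic counterpart (Lemma~\ref{lem:deter_smooth_one_epoch}), replacing the starting per-iteration inequality with its stochastic variant (Lemma~\ref{Lemma_VARSGD_iter_sto}) and carefully tracking the additional noise term that arises from the SFO oracle.

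First, I would start from \eqref{eq:lemma_iter_sto} in the smooth setting (so $\mu=0$, as in Lemma~\ref{lem:deter_smooth_one_epoch}) and multiply both sides by $\gamma_s/\alpha_s$ to obtain, after rearrangement,
\begin{align*}
\tfrac{\gamma_s}{\alpha_s}\bbe[\psi(\bar x_t)-\psi(x)] + \bbe[V(x_t,x)]
&\le \tfrac{\gamma_s}{\alpha_s}(1-\alpha_s-p_s)\bbe[\psi(\bar x_{t-1})-\psi(x)] \\
&\quad + \tfrac{\gamma_s p_s}{\alpha_s}\bbe[\psi(\tilde x)-\psi(x)] + \bbe[V(x_{t-1},x)] + N_s,
\end{align*}
where $N_s := \tfrac{\gamma_s^2}{2(1+\mu\gamma_s-L\alpha_s\gamma_s)}\bigl(\tsum_{i=1}^m \tfrac{\sigma^2}{q_im^2 b_s} + \tsum_{i=1}^m \tfrac{2\sigma^2}{q_im^2 B_s} + \tfrac{2\sigma^2}{mB_s}\bigr)$ is the per-iteration oracle noise contribution. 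This is exactly the deterministic recursion with the extra additive $N_s$.

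Next, I would sum this inequality for $t=1,\ldots,T_s$ and exploit the choice of $\theta_t$ in \eqref{def_theta_acc_SVRG}: the coefficient of $\psi(\bar x_t) - \psi(x)$ collected from the LHS at step $t$ and the RHS at step $t+1$ is precisely $\theta_t$ for $1\le t\le T_s-1$, while at $t=T_s$ only the LHS contributes $\gamma_s/\alpha_s = \theta_{T_s}$; the term at $t=0$ is absorbed using $\bar x_0=\tilde x$. After telescoping and dividing by $\tsum_{t=1}^{T_s}\theta_t$, convexity of $\psi$ applied to $\tilde x^s = \tsum_t (\theta_t\bar x_t)/\tsum_t\theta_t$ yields the per-epoch bound
\begin{align*}
{\cal L}_s\bbe[\psi(\tilde x^s)-\psi(x)] \le {\cal R}_s\bbe[\psi(\tilde x^{s-1})-\psi(x)] + \bbe[V(x^{s-1},x) - V(x^s,x)] + T_s N_s,
\end{align*}
since the noise term $N_s$ simply appears in each of the $T_s$ summed inequalities and thus accumulates as $T_s N_s$.

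Finally, I would iterate this per-epoch inequality across epochs $1,\ldots,s$, using the hypothesis $w_j = {\cal L}_j - {\cal R}_{j+1}\ge 0$ to regroup the LHS/RHS terms into telescopic form. Specifically, replacing ${\cal L}_j\bbe[\psi(\tilde x^j)-\psi(x)]$ in epoch $j$ by ${\cal R}_{j+1}\bbe[\psi(\tilde x^j)-\psi(x)] + w_j\bbe[\psi(\tilde x^j)-\psi(x)]$ and cancelling the ${\cal R}_{j+1}\bbe[\psi(\tilde x^j)-\psi(x)]$ against the leading term of epoch $j+1$, one obtains
\begin{align*}
{\cal L}_s\bbe[\psi(\tilde x^s)-\psi(x)] + \tsum_{j=1}^{s-1}w_j\bbe[\psi(\tilde x^j)-\psi(x)] \\
\le {\cal R}_1\bbe[\psi(\tilde x^0)-\psi(x)] + \bbe[V(x^0,x)-V(x^s,x)] + \tsum_{j=1}^{s}T_j N_j.
\end{align*}
The desired inequality \eqref{VRASGD_smooth_sto} then follows by applying Jensen's inequality to the convex combination defining $\bar x^s$: $(\tsum_{j=1}^{s-1}w_j)\bbe[\psi(\bar x^s)-\psi(x)] \le \tsum_{j=1}^{s-1} w_j\bbe[\psi(\tilde x^j)-\psi(x)]$.

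The main obstacle is purely bookkeeping: verifying that the weights $\theta_t$ collapse the per-iteration inequalities into the stated per-epoch form exactly as in the deterministic argument, and confirming that the extra noise $N_s$ accumulates only as $T_s N_s$ per epoch (it does, since $N_s$ is independent of $t$ and the inequalities are simply summed without weighting of $N_s$). Once these two points are checked, the rest of the derivation is identical to that of Lemma~\ref{lem:deter_smooth_one_epoch}.
\end{proofof}
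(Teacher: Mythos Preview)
Your proposal is correct and takes essentially the same approach as the paper's own proof: start from the per-iteration stochastic inequality \eqref{eq:lemma_iter_sto} with $\mu=0$, rescale by $\gamma_s/\alpha_s$, and then repeat verbatim the telescoping argument of Lemma~\ref{lem:deter_smooth_one_epoch}, with the only new ingredient being that the constant-in-$t$ noise term $N_s$ accumulates to $T_sN_s$ over an epoch and hence to $\tsum_{j=1}^s T_jN_j$ across epochs. The paper's proof simply states the rescaled per-iteration inequality and then says ``following the same procedure as we did in proving Lemma~\ref{lem:deter_smooth_one_epoch}''; you have spelled out precisely those steps.
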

\begin{proof}
	Using our assumptions on $\alpha_s$, $\gamma_s$ and $p_s$,
	the fact that $\mu = 0$, and subtracting $\psi(x)$ from the concluding inequality \eqref{eq:lemma_iter_sto} of Lemma~\ref{Lemma_VARSGD_iter_sto}, we have
	\begin{align*}
	\tfrac{\gamma_s}{\alpha_s}  \bbe[\psi(\bar x_t) - \psi(x)] &\le  \tfrac{\gamma_s}{\alpha_s} (1-\alpha_s - p_s) \bbe[\psi(\bar x_{t-1}) - \psi(x)]
	+  \tfrac{\gamma_s p_s}{\alpha_s}  \bbe[\psi(\tilde x) - \psi(x)]\\
	&\qquad+\bbe[V(x_{t-1},x) - V(x_t,x)]\\
	&\qquad +\tfrac{\gamma_s^2}{2(1+\mu \gamma_s - L \alpha_s \gamma_s)}\big(\tsum_{i=1}^m \tfrac{\sigma^2}{q_i m^2 b_s}
	+ \tsum_{i=1}^m \tfrac{2\sigma^2}{q_i m^2 B_s}
	+\tfrac{2\sigma^2}{m B_s}\big).
	\end{align*}
	Hence following the same procedure as we did in proving Lemma~\ref{lem:deter_smooth_one_epoch}, we can obtain \eqref{VRASGD_smooth_sto}.
\end{proof}

With the help of Lemma \ref{lem:sto_smooth_one_epoch}, we are now ready to prove Theorem \ref{Them:main-sto-smooth}, which establishes the convergence properties of \algone for solving stochastic smooth finite-sum problems given in the form of \eqref{cp}.

\begin{proofof}{Theorem \ref{Them:main-sto-smooth}}
	Let the probabilities $q_i=L_i / \tsum_{i=1}^m L_i$ for $i = 1, \ldots, m$,
	we then have $L_Q = L$.
	Clearly by setting $\alpha_s$, $\gamma_s$, and $p_s$ in \eqref{parameter-deter-smooth1} and \eqref{parameter-deter-alpha-sm}, conditions \eqnok{VRASGD_cond1} and \eqnok{VRASGD_cond2} are satisfied.
	Moreover, similar to the deterministic case, by setting ${\cal L}_s$ and ${\cal R}_s$ as in \eqref{def_LHs}, we can show that ${\cal L}_s \ge {\cal R}_{s+1}$
	for any $s \ge 1$.
	Using these observations in \eqnok{VRASGD_smooth_sto}, we then conclude that
	\begin{align*}
	{\cal L}_s \bbe[\psi(\tilde{x}^s) - \psi(x)] &\le {\cal R}_1 \bbe[\psi(\tilde{x}^{0}) - \psi(x)]
	+\bbe[V(x^{0},x) - V(x^s,x)]\\
	&\qquad +\tsum_{j=1}^{s}\tfrac{3\gamma_{j}^2T_{j}}{4}\big(\tsum_{i=1}^m \tfrac{\sigma^2}{q_i m^2 b_j}
	+ \tsum_{i=1}^m \tfrac{2\sigma^2}{q_i m^2 B_j}
	+\tfrac{2\sigma^2}{m B_j}\big) \\
	&\le \tfrac{2}{3L} [\psi(x^{0}) - \psi(x)] + V(x^{0},x)\\
	&\qquad +\tsum_{j=1}^{s}\tfrac{T_{j}}{12L^2\alpha_{j}^2}\big(\tfrac{C\sigma^2}{b_{j}} +\tfrac{2C\sigma^2}{B_{j}}+\tfrac{2\sigma^2}{m B_j}\big) \\
	&\le \tfrac{2}{3L} [\psi(x^{0}) - \psi(x)] + V(x^{0},x)\\
	&\qquad +\tsum_{j=1}^{s}\tfrac{T_{j}}{12L^2\alpha_{j}^2}\big(\tfrac{C\sigma^2}{b_{j}} +\tfrac{4C\sigma^2}{B_{j}}\big)
	\end{align*}
	for any $s \ge 1$, where the second inequality follows from the fact that ${\cal R}_1 = \tfrac{2}{3L}, \gamma_s = \tfrac{1}{3 L \alpha_s}$, and the definition $C:=\tsum_{i=1}^m{\tfrac{1}{q_im^2}}$.
	Note that the last two terms $\tfrac{C\sigma^2}{b_{j}}$ and $\tfrac{4C\sigma^2}{B_{j}}$
	are in the same order.
	Also note that the sampling complexity (number of calls to the SFO oracle) is bounded by $\tsum_s mB_s+\tsum_s T_sb_s$ and the communication complexity (CC), if in the distribued machine learning case, is bounded by $\sum_s(m+T_s)$.
	So we can let $B_j\equiv b_j$, then these two complexity are bounded by their first term $m\tsum_s B_s$ and $mS$ respective (note that $T_s$ is always no larger than $m$).
	Concretely, we let
	\beq\label{eq:def_smooth_mj}
	B_j\equiv b_j:=
	\begin{cases}
		b_1(\tfrac{3}{2})^{j-1},  & j \leq s_0\\
		b' & j > s_0
	\end{cases}.
	\eeq
	Recalling that $D_0:= 2[\psi(x^0) - \psi(x)] + 3L V(x^0, x)$ in \eqref{def_D_0}, now we distinguish the following two cases.
	
	{\bf Case 1:} if $s \le s_0=\lfloor \log m\rfloor+1$,
	$
	{\cal L}_s =
	\tfrac{T_s}{3L \alpha_s^2}=\tfrac{2^{s+1}}{3L}.
	$
	Therefore, we have
	\begin{align*}
	\bbe[\psi(\tilde{x}^s) - \psi(x)] &\le 2^{-(s+1)} D_0 +2^{-(s+1)}\tsum_{j=1}^{s}\tfrac{2^{j-1}}{L}\big(\tfrac{C\sigma^2}{b_{j}}
	+\tfrac{4C\sigma^2}{B_{j}}\big) \\
	&\le 2^{-(s+1)} D_0 +2^{-(s+1)}\tsum_{j=1}^s\tfrac{5C\sigma^2 2^{j-1}}{L B_{j}} \\
	&\le 2^{-(s+1)} D_0 + 2^{-(s+1)}\tsum_{j=1}^s(\tfrac{4}{3})^{j-1}\tfrac{5C\sigma^2}{L b_1} \\
	&\le 2^{-(s+1)} D_0 + (\tfrac{2}{3})^{s}\tfrac{15C\sigma^2}{2L b_1} \\
	&= \tfrac{\epsilon}{2} + \tfrac{\epsilon}{2}, \quad 1\le s \le s_0.
	\end{align*}
	where the last equality holds when $s=\log \tfrac{D_0}{\epsilon}$ and $b_1= (\tfrac{2}{3})^s\frac{15C\sigma^2}{L\epsilon}$.
	
	In this case,
	\algone needs to run at most $S_l:=\min\left\{ \log \tfrac{D_0}{\epsilon}, s_0 \right\}$ epochs.
	Hence, the sampling complexity (number of calls to the SFO oracle) is bounded by
	\begin{align}
	\tsum_{s=1}^{S_l} (mB_s+T_sb_s)
	\leq 2m \tsum_{s=1}^{S_l} b_1(\tfrac{3}{2})^{s-1}
	\leq 4mb_1(\tfrac{3}{2})^{S_l}
	= {\cal O}\left\{ \tfrac{mC\sigma^2}{L\epsilon}\right\},  \label{eq:sto_smooth_sfo}
	\end{align}
	and the communication complexity (CC), if in the distributed machine learning case, is bounded by
	\begin{align}
	\tsum_{s=1}^{S_l} (m+T_s)
	\leq 2mS_l
	= {\cal O}\left\{ m\log \tfrac{D_0}{\epsilon}\right\}, \quad m \geq \tfrac{D_0}{\epsilon}. \label{eq:sto_smooth_cc}
	\end{align}
	
	\vspace{5pt}
	{\bf Case 2:} if $s \ge s_0$, ${\cal L}_s \ge \tfrac{(s-s_0+4)^2 T_{s_0}}{24 L}$.
	Therefore, we have
	\begin{align*}
	\bbe[\psi(\tilde{x}^s) - \psi(x)]
	& \le \tfrac{8 D_0}{(s-s_0+4)^2 T_{s_0}}
	+\tfrac{8}{(s-s_0+4)^2 T_{s_0}} \big(\tsum_{j=1}^{s_0}\tfrac{5C\sigma^2 2^{j-1}}{L B_{j}} +\tsum_{j=s_0+1}^s\tfrac{5C\sigma^2T_{s_0}}{4L b' \alpha_j^2}\big)\\
	& \le \tfrac{16 D_0}{(s-s_0+4)^2 m}
	+\tfrac{16}{(s-s_0+4)^2} \big(2^{-s_0}\tsum_{j=1}^{s_0}(\tfrac{4}{3})^{j-1}\tfrac{5C\sigma^2}{L b_1}
	+\tsum_{j=s_0+1}^s\tfrac{5C\sigma^2(j-s_0+4)^2}{32L b'}\big)\\
	& \le \tfrac{16 D_0}{(s-s_0+4)^2 m} + \tfrac{16}{(s-s_0+4)^2}(\tfrac{2}{3})^{s_0}\tfrac{15C\sigma^2}{L b_1}
	+ \tfrac{5C\sigma^2(s-s_0)}{2Lb'}\\
	&= \tfrac{\epsilon}{2} + \tfrac{\epsilon}{4} + \tfrac{\epsilon}{4}
	\quad  s > s_0.
	\end{align*}
	where the last equality holds when $s=s_0 + \sqrt{\tfrac{32D_0}{m\epsilon}} -4$, $b_1= (\tfrac{2}{3})^{s_0}\frac{30C\sigma^2m}{LD_0}$ and
	$b'=\tfrac{10C\sigma^2(s-s_0)}{L\epsilon}$.
	
	In this case,
	\algone needs to run at most run at most $S_h:= s_0 +\sqrt{\tfrac{32 D_0}{m \epsilon}} -4$  epochs.
	Hence, the sampling complexity (number of calls to the SFO oracle) is bounded by
	\begin{align}
	\tsum_{s=1}^{S_l} (mB_s+T_sb_s)
	&\leq 2m \tsum_{s=1}^{s_0} b_1(\tfrac{3}{2})^{s-1} + 2mb'(S_h-s_0)\nn\\
	&\leq 4mb_1(\tfrac{3}{2})^{s_0}+\tfrac{20mC\sigma^2(S_h-s_0)^2}{L\epsilon}
	= {\cal O}\left\{ \tfrac{C\sigma^2D_0}{L\epsilon^2}\right\},  \label{eq:sto_smooth_sfo2}
	\end{align}
	and the communication complexity (CC), if in the distributed machine learning case, is bounded by
	\begin{align}
	\tsum_{s=1}^{S_h} (m+T_s)
	\leq 2m(s_0+S_h-s_0)
	= {\cal O}\left\{ m\log m + \sqrt{\tfrac{m D_0}{\epsilon}} \right\}, \quad m < \tfrac{D_0}{\epsilon}. \label{eq:sto_smooth_cc2}
	\end{align}
	The result of Theorem \ref{Them:main-sto-smooth} follows immediately by combining these two cases.
\end{proofof}

\section{More numerical experiments}\label{sec:experiments}
\noindent{\bf Strongly convex problems with small strongly convex modulus.}
We consider ridge regression models with a small regularizer coefficient ($\lambda$) given in the following form,
\beq\label{p-ridge}
\min_{x\in \bbr^n}\{\psi(x):= \tfrac{1}{m}\tsum_{i=1}^{m}f_i(x) + h(x)\} \ \mbox{where } f_i(x):=\tfrac{1}{2}(a_i^Tx-b_i)^2, h(x):=\lambda \|x\|_2^2.
\eeq
Since the above problem is strongly convex, we compare the performance of \algone with those of Prox-SVRG\cite{xiao2014proximal} and Katyusha\cite{allen2016katyusha}.
As we can see from Figure~\ref{ridge-results}, \algone and Katyusha converges much faster than Prox-SVRG in terms of training loss.
Although \algone and Katyusha perform similar in terms of training loss per gradient calls, \algone may require less CPU time to perform one epoch than Katyusha.
In fact, \algone only needs to solve one proximal mapping per inner iteration while Katyusha requires to solve two for strongly convex problems.
\begin{figure}[H]
	\begin{minipage}{0.45\textwidth}
		\centering
		\includegraphics[scale = 0.15]{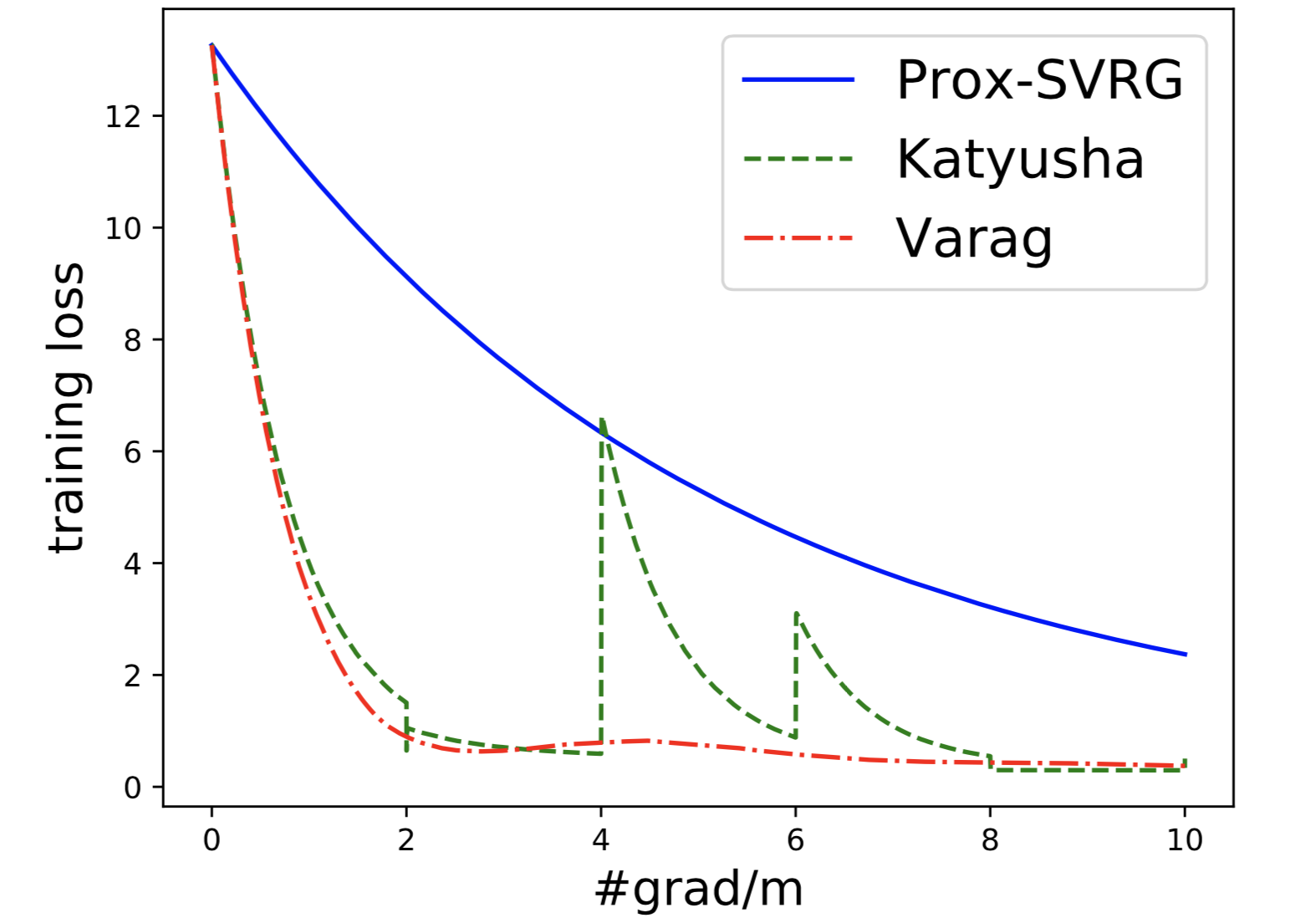}
		\\
		{\scriptsize  Diabetes ($m=1151$), ridge $\lambda=10^{-6}$}
	\end{minipage}
	\begin{minipage}{0.45\textwidth}
		\centering
		\includegraphics[scale = 0.15]{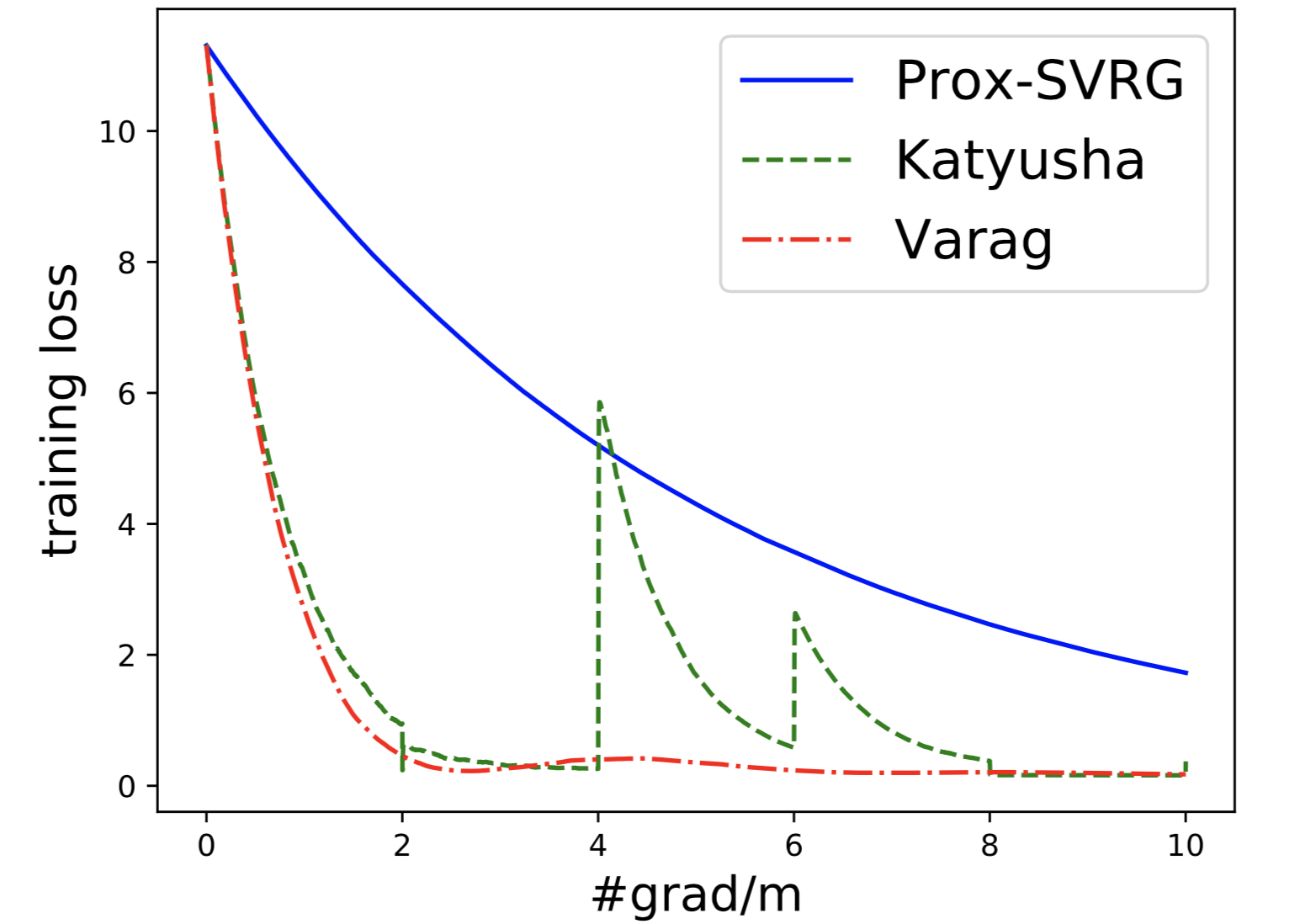}
		{\scriptsize  Breast-Cancer-Wisconsin ($m=683$), ridge $\lambda=10^{-6}$}
	\end{minipage}
	\caption{\footnotesize In this experiments, the algorithmic parameters for Prox-SVRG and Katyusha are set according to \cite{xiao2014proximal} and \cite{allen2016katyusha}, respectively, and those for \algone are set as in Theorem~\ref{Them:main-deter-sc}. }
	\label{ridge-results}
\end{figure}

\end{document}